\newcommand{\Dod}{\mathrm{Dod}}
\newcommand{\sphere}[1]{{S}^{#1}}
\newcommand{\RP}[1]{\mathbb{RP}^{#1}}
\newcommand{\CP}[1]{\mathbb{CP}^{#1}}
\newcommand{\ZZ}{\mathbb{Z}}
\newcommand{\Twist}{\tau}
\definecolor{myGreen}{rgb}{0.0, 0.5, 0.0}
\definecolor{myGraphRed}{RGB}{255, 85, 85}
\definecolor{myGraphGreen}{RGB}{90, 160, 44}
\definecolor{maroon(html/css)}{rgb}{0.5, 0.0, 0.0}
\definecolor{aqua}{rgb}{0.0, 1.0, 1.0}
\def\@tocline#1#2#3#4#5#6#7{\relax
  \ifnum #1>\c@tocdepth 
  \else
    \par \addpenalty\@secpenalty\addvspace{#2}%
    \begingroup \hyphenpenalty\@M
    \@ifempty{#4}{%
      \@tempdima\csname r@tocindent\number#1\endcsname\relax
    }{%
      \@tempdima#4\relax
    }%
    \parindent\z@ \leftskip#3\relax \advance\leftskip\@tempdima\relax
    \rightskip\@pnumwidth plus4em \parfillskip-\@pnumwidth
    #5\leavevmode\hskip-\@tempdima
      \ifcase #1
       \or\or \hskip 1em \or \hskip 2em \else \hskip 3em \fi%
      #6\nobreak\relax
    \dotfill\hbox to\@pnumwidth{\@tocpagenum{#7}}\par
    \nobreak
    \endgroup
  \fi}
\newtheorem{theorem}{Theorem}[section]
\newtheorem{conjecture}{Conjecture}[section]
\newtheorem{corollary}{Corollary}[section]
\newtheorem{lemma}{Lemma}[section]
\newtheorem*{ttheorem}{Topological input theorem}
\theoremstyle{definition}
\newtheorem{definition}{Definition}[section]
\newtheorem{example}{Example}[section]
\newtheorem{remark}{Remark}[section]
\newtheorem{question}{Question}[section]
\newtheorem{claim}{Claim}
\newtheorem*{*claim}{Claim}
\let\c@conjecture=\c@theorem
\let\c@corollary=\c@theorem
\let\c@proposition=\c@theorem
\let\c@lemma=\c@theorem
\let\c@definition=\c@theorem
\let\c@example=\c@theorem
\let\c@remark=\c@theorem
\let\c@equation=\c@theorem
\let\c@question=\c@theorem
\def\makeautorefname#1#2{\expandafter\def\csname#1autorefname\endcsname{#2}}
\numberwithin{equation}{section}
\newcommand*{\Alphabet}{ABCDEFGHIJKLMNOPQRSTUVWXYZ1234567890}
\newcommand*{\alphabet}{abcdefghijklmnopqrstuvwxyz1234567890}
\newlength\fcaph
\newlength\fdesc
\newlength\factualfontsize
\newcounter{commentcounter}
\newcommand{\Alg}{\texttt{\textup{Alg}}}
\newcommand{\Man}{\texttt{\textup{Man}}}
\newcommand{\Diag}{\texttt{\textup{Diag}}}
\title[A group-theoretic framework for low-dimensional topology]{ \small
A group-theoretic framework for low-dimensional topology\\
\footnotesize Or, how not to study low-dimensional topology?$^{\ast}$
}
\thanks{$^{\ast}$We were inspired by Stallings' paper ``How not to prove the Poincar\'{e} conjecture'' \cite{stallings1966nottoprove} in more ways than one.}
\author{Sarah Blackwell}
\address{University of Virginia, Charlottesville, United States}
\email{\href{mailto:blackwell@virginia.edu}{blackwell@virginia.edu}}
\urladdr{\url{https://seblackwell.com}}
\author{Robion Kirby}
\address{University of California, Berkeley, United States}
\email{\href{mailto:kirby@math.berkeley.edu}{kirby@math.berkeley.edu}}
\urladdr{\url{https://math.berkeley.edu/~kirby}}
\author{Michael Klug}
\address{University of Chicago, Chicago, United States}
\email{\href{mailto:michael.r.klug@gmail.com }{michael.r.klug@gmail.com }}
\urladdr{\url{https://mathematics.uchicago.edu/people/profile/michael-klug}}
\author{Vincent Longo}
\address{University of Connecticut, Storrs, United States}
\email{\href{mailto:vincent.2.longo@uconn.edu}{vincent.2.longo@uconn.edu}}
\urladdr{\url{https://math.uconn.edu/person/vincent-longo}}
\author{Benjamin Ruppik}
\address{Heinrich-Heine-Universität, Düsseldorf, Germany}
\email{\href{mailto:ruppik@hhu.de}{benjamin.ruppik@hhu.de}}
\urladdr{\url{https://bruppik.de}}
\keywords{
    Splitting homomorphisms,
    group trisections,
    tangles in handlebodies,
    links in 3-manifolds,
    knotted surfaces in 4-manifolds,
    surface groups,
    free groups,
    Stallings folding.
}
\def\subjclassname{\textup{2020} Mathematics Subject Classification}
\let\csname subjclassname@1991\endcsname=\subjclassname
\let\csname subjclassname@2000\endcsname=\subjclassname
\subjclass{
    57K40, 
    57M05, 
    20F05 
    \hfill
    Date: \today
}
\begin{document}

\begin{abstract}
A correspondence, by way of Heegaard splittings, between closed oriented 3-manifolds and pairs of surjections from a surface group to a free group has been studied by Stallings, Jaco, and Hempel. This correspondence, by way of trisections, was recently extended by Abrams, Gay, and Kirby to the case of smooth, closed, connected, oriented 4-manifolds. 
We unify these perspectives and generalize this correspondence to the case of links in closed oriented 3-manifolds and links of knotted surfaces in smooth, closed, connected, oriented 4-manifolds. 
The algebraic manifestations of these four subfields of low-dimensional topology (3-manifolds, 4-manifolds, knot theory, and knotted surface theory) are all strikingly similar, and this correspondence perhaps elucidates some unique character of low-dimensional topology.
\end{abstract}

\maketitle

\section{Introduction}
\label{sec:intro}

All manifolds and submanifolds discussed in this paper are smooth and, with the exception of surfaces in $4$-manifolds, oriented.  In this paper, we use decompositions of manifolds in dimensions three and four, possibly together with links, to give a group-theoretic framework for studying these spaces.  We begin by reviewing the simplest case of closed 3-dimensional manifolds, where this work has already been carried out by Stallings and Jaco \cite{stallings1966nottoprove,jaco1970stable}. 

A \textit{Heegaard decomposition}, or \textit{Heegaard splitting}, of a closed 3-manifold $M^3$ is a pair of handlebodies $H_1$ and $H_2$ embedded inside of $M$ with boundaries a common genus $g$ surface $\Sigma_g$ such that $M = H_1 \cup_{\Sigma_g} H_2$.  Every such 3-manifold admits a Heegaard decomposition (for example by triangulating $M$ and taking a regular neighborhood of the 1-skeleton).  By choosing a basepoint on $\Sigma_g$, we then obtain the following pushout diagram between fundamental groups, where the maps are induced by inclusion.
\begin{center}
\begin{tikzcd}[row sep=scriptsize, column sep=scriptsize]
        \pi_1(\Sigma_g, \ast)
        \arrow[rr, twoheadrightarrow] \arrow[dd, twoheadrightarrow]
        & &
        \pi_1(H_1,\ast) \arrow[dd, twoheadrightarrow]
        \\ \\
        \pi_1(H_2,\ast)
        \arrow[rr, twoheadrightarrow]
        & &
        \pi_{1}(M,\ast)
\end{tikzcd}
\end{center}
Note that $\pi_1(H_1, \ast)$ and $\pi_1(H_2, \ast)$ are both free groups of rank $g$, and the maps are surjections. Jaco proved that given a surjective homomorphism $\phi \colon \pi_1(\Sigma_g, \ast) \twoheadrightarrow F_g$, there is a unique handlebody $H(\phi)$ with $\partial H(\phi) = \Sigma_g$ such that the map induced on $\pi_1$ by inclusion of $\Sigma_g$ as the boundary agrees with $\phi$ (see Jaco \cite{jaco1969splitting}, \autoref{lem:uniqueness}, and also Leininger and Reid \cite[Lem.~2.2]{leininger2002corank} for a simpler proof in this case).  From this, it follows that a pair of surjective homomorphisms $(\phi_1,\phi_2)$ with $\phi_i \colon \pi_1(\Sigma_g, \ast) \twoheadrightarrow F_g$ determines a 3-manifold $H(\phi_1) \cup_{\Sigma_g} H(\phi_2)$, and that every closed 3-manifold arises in this way. Jaco referred to these pairs of maps as \textit{splitting homomorphisms}.

One concrete application of this is the following group-theoretic recasting of the 3-dimensional Poincar\'{e} conjecture.  Writing $\pi_1(\Sigma_{g}, \ast) = \langle a_1, b_1, \ldots, a_g, b_g \mid [a_1,b_1] \cdots [a_g,b_g] = 1 \rangle$, there is a surjective homomorphism 
    \begin{align*}
        \pi_1(\Sigma_g, \ast) &\twoheadrightarrow \langle x_1, \ldots x_g \rangle \times \langle y_1, \ldots y_g \rangle \\
        a_i                 &\mapsto (x_i,1) \\
        b_j                 &\mapsto (1,y_j).
    \end{align*}
The Poincar\'{e} conjecture is equivalent to the statement that this is the unique surjective homomorphism of these groups modulo pre-composing with automorphisms and post-composing with products of automorphisms (see Hempel's \textit{3-Manifolds} \cite{hempelBook}).  Thus by Perelman's work \cite{perelman2003finite} this result follows, and we are left in the state where the only known proof of this perhaps innocent-looking group-theoretic result involves a careful analysis of Ricci flow.  

In addition to the observation that every closed 3-manifold admits a Heegaard decomposition, there is a corresponding uniqueness theorem called the \textit{Reidemeister-Singer theorem}, which states that any two Heegaard decompositions of a fixed 3-manifold differ by a sequence of simple inverse geometric operations called stabilization and destabilization \cite{reidemeister1933topologie,singer1933heegaard}.  Jaco proposed a way of incorporating the Reidmeister-Singer theorem into the construction of 3-manifolds from appropriate pairs $(\phi_1, \phi_2)$ to obtain a bijective correspondence \cite{jaco1970stable}.  

More recently, a 4-dimensional analogue of Heegaard splittings, called \textit{trisections}, together with a corresponding uniqueness theorem has been introduced by Gay and Kirby \cite{gay2016trisecting}. A trisection of a closed 4-manifold $X^4$ is a decomposition $X=X_1 \cup X_2 \cup X_3$ into 4-dimensional 1-handlebodies $X_i$, which pairwise intersect in genus $g$ handlebodies $H_g$, and with triple intersection a genus $g$ surface $\Sigma_g$. Every smooth, closed, connected, oriented $4$-manifold admits a trisection, which is unique up to a stabilization operation \cite{gay2016trisecting}. (See \autoref{sec:closed_4-manifolds} for a further review of trisections.) 
The inclusion maps between the various components of a trisection of a $4$-manifold induce maps between their fundamental groups, which produces the following commutative diagram, where every face is a pushout and every homomorphism is surjective. The basepoint is chosen to lie on $\Sigma_g$.
\begin{center}
  \begin{tikzcd}[row sep=scriptsize, column sep=scriptsize]
        & 
        \pi_1(H_g,\ast)
        \arrow[rr, twoheadrightarrow] \arrow[dd, twoheadrightarrow] & &
        \pi_{1}(X_1, \ast)
        \arrow[dd, twoheadrightarrow] \\
        \pi_1(\Sigma_g, \ast)
        \arrow[ur, twoheadrightarrow] \arrow[rr, crossing over, twoheadrightarrow] \arrow[dd, twoheadrightarrow]
        & &
        \pi_1(H_g,\ast)
        \arrow[ur, twoheadrightarrow]
        \\
        & 
        \pi_{1}(X_3,\ast)
        \arrow[rr, twoheadrightarrow] & &
        \pi_{1}(X^4,\ast) \\
        \pi_1(H_g,\ast)
        \arrow[rr, twoheadrightarrow] \arrow[ur, twoheadrightarrow]
        & &
        \pi_{1}(X_2,\ast)
        \arrow[from=uu, crossing over, twoheadrightarrow] \arrow[ur, twoheadrightarrow] 
    \end{tikzcd}
\end{center}

In \cite{abrams2018group}, Abrams, Gay, and Kirby noticed that the analogue of being able to recover a 3-manifold from a pair of surjective homomorphisms $(\phi_1,\phi_2)$ holds in dimension four via trisections.  Namely, given three surjective homomorphisms $(\phi_1,\phi_2,\phi_3)$ with $\phi_i \colon \pi_1(\Sigma_g, \ast) \twoheadrightarrow F_g$ such that the pairwise pushout of any pair $\phi_i$ and $\phi_j$ is a free group $F_k$, then since $\#^k(S^1 \times S^2)$ is the unique closed, orientable 3-manifold with fundamental group $F_k$ (by Perelman's work \cite{perelman2003finite}), we obtain a closed 4-manifold by realizing three handlebodies $H(\phi_i)$, gluing them along their common boundary $\Sigma_g$, and filling in their pairwise unions, which are diffeomorphic to $\#^k(S^1 \times S^2)$, with three 4-dimensional 1-handlebodies (uniquely by Laudenbach and Po{\'e}naru \cite{laudenbach1972note}). They called these triple of maps (which then determine the entire cube pictured above) a \textit{group trisection}, where the object being trisected is the group resulting from pushing out the three maps into a cube (in this case, $ \pi_{1}(X^4,\ast)$).

Additionally in \cite{abrams2018group}, Abrams, Gay, and Kirby use the uniqueness theorem for tisections to obtain results analogous to those previously mentioned in dimension three.  Namely, they obtain a group-theoretic statement that is equivalent to the smooth 4-dimensional Poincar\'{e} conjecture and, by modding out the set of such triples $(\phi_1,\phi_2, \phi_3)$, they obtain a bijection between a group-theoretically defined set and the set of all smooth, closed, connected, oriented 4-manifolds. 

Not only can every 3-manifold be split into a union of two handlebodies, but additionally, given a link $L \subset M$ we have a Heegaard splitting $M = H_1 \cup_{\Sigma_g} H_2$ such that the tangles $T_1 = L \cap H_1$ and $T_2 = L \cap H_2$ are trivial (that is, consist of arcs that can all be simultaneously isotoped in $H_i$ into $\Sigma_g$).  This is called a \textit{bridge splitting} of $L \subset M$. Note that the complement of $L$ in each handlebody is again a handlebody and hence has free fundamental group.  In the case of $M = S^3$ with the Heegaard splitting into balls, this is the classical setting of \textit{bridge position} of links (see \cite{schubert1954numerische}).

One dimension up, a similar story emerges. 
A \textit{knotted surface} is a closed (potentially non-orientable or disconnected) surface smoothly embedded in a $4$-manifold. Meier and Zupan showed that given a knotted surface in a trisected 4-manifold, it can always be isotoped to be in \textit{bridge position}, meaning that it intersects the trisected 4-manifold in such a way that the surface inherits its own trisection, called a \textit{bridge trisection} \cite{meier2017bridgeS4,meier2018bridge4manifolds}. This is unique up to a stabilization operation \cite{meier2017bridgeS4,hughes2018isotopies}. (See \autoref{sec:links_4-manifolds} for a further review of bridge trisections.)
Given the existence and uniqueness of such a decomposition in this setting, it is natural to wonder whether knotted surfaces in 4-manifolds can also be given such a group-theoretic framework. Achieving this goal was the initial motivation for this work.

The main results of this paper are bijective correspondences from group-theoretic sets to the set of 3-manifolds together with a link and 4-manifolds together with a knotted surface, and are summarized in \autoref{fig:chart}.  Just as the cases of 3-manifolds and 4-manifolds are facilitated by Heegaard splittings and trisections, respectively, our result for links in 3-manifolds and surfaces in 4-manifolds use bridge splittings and bridge trisections, respectively.

\Large 
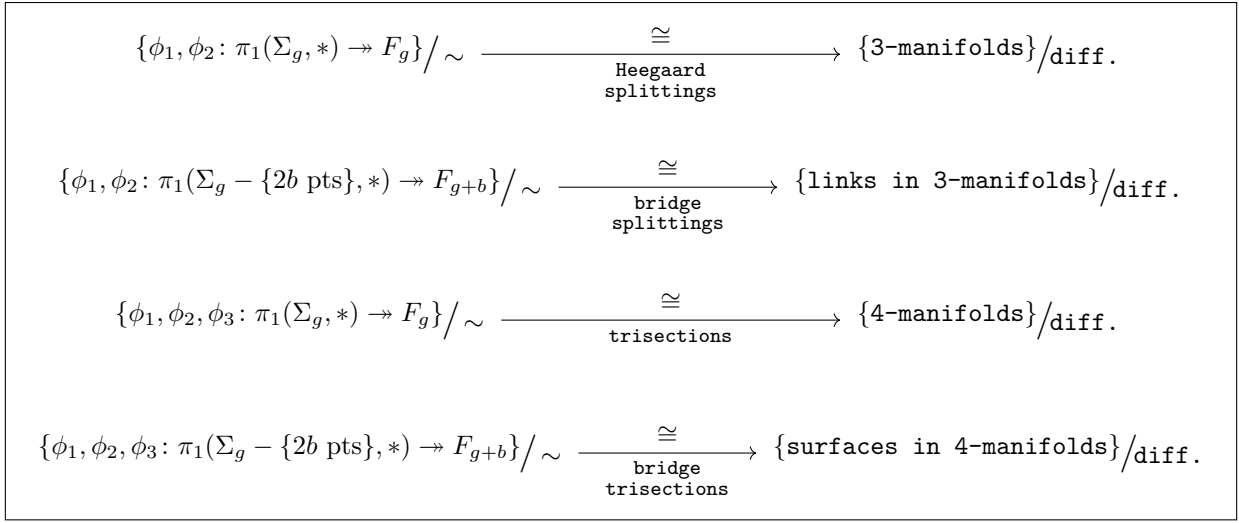
\begin{figure}
\begin{tikzcd}[framed]
    \sfrac{\{ \phi_1,\phi_2 \colon \pi_1(\Sigma_g, \ast) \twoheadrightarrow F_g \}}{\sim}
    \arrow{rr}{\cong}[swap]{ \normalsize \substack{\texttt{\textup{Heegaard}} \\ \texttt{\textup{splittings}}}}
    & &  \sfrac{\{ \texttt{\textup{3-manifolds}} \}}{\texttt{\textup{diff.}}} \\
    \sfrac{\{ \phi_1,\phi_2 \colon \pi_1(\Sigma_g - \{2b \text{ pts}\}, \ast) \twoheadrightarrow F_{g+b} \}}{\sim}
    \arrow{rr}{\cong}[swap]{ \normalsize \substack{\texttt{\textup{bridge}} \\ \texttt{\textup{splittings}}}}
    & & \sfrac{\{ \texttt{\textup{links in 3-manifolds}} \}}{\texttt{\textup{diff.}}} \\
    \sfrac{\{ \phi_1,\phi_2,\phi_3 \colon \pi_1(\Sigma_g, \ast) \twoheadrightarrow F_g \}}{\sim}
    \arrow{rr}{\cong}[swap]{\normalsize \substack{\texttt{\textup{trisections}}}}
    & & \sfrac{\{ \texttt{\textup{4-manifolds}} \}}{\texttt{\textup{diff.}}} \\ 
    \sfrac{\{ \phi_1,\phi_2,\phi_3 \colon \pi_1(\Sigma_g - \{2b \text{ pts}\}, \ast) \twoheadrightarrow F_{g+b} \}}{\sim}
    \arrow{rr}{\cong}[swap]{\normalsize  \substack{\texttt{\textup{bridge}} \\ \texttt{\textup{trisections}}}}
    & & \sfrac{\{ \texttt{\textup{surfaces in 4-manifolds}} \}}{\texttt{\textup{diff.}}}
\end{tikzcd}
    \caption{A summary of the main results of this paper; the equivalences via bridge splittings and bridge trisections are novel, while the other equivalences are a recasting and unification of previous work. All manifolds are smooth and, with the exception of surfaces in $4$-manifolds, oriented, and the diffeomorphisms are orientation preserving. All maps $\phi_i$ shown are surjective homomorphisms, and satisfy two algebraic conditions (see \autoref{def:bounding_hom}). The homomorphisms $\phi_i$ in the third and fourth rows need to satisfy the additional condition that they push out pairwise to free groups of an appropriate rank (see \autoref{sec:closed_4-manifolds} and \autoref{sec:links_4-manifolds}).
    \label{fig:chart}}
\end{figure}
\normalsize

In order to get off the ground constructing these spaces from appropriate group homomorphisms, we need to know how to recover a trivial tangle $T(\phi)$ with boundary points $\{ p_1, \ldots, p_{2b}\}$ in a handlebody $H(\phi)$ with boundary $\Sigma_g$ from a suitable homormophism \[\phi \colon \pi_1(\Sigma_g - \{p_1, \ldots, p_{2b} \}, \ast) \twoheadrightarrow F_{g+b},\] where $F_{g+b}$ is playing the role of the fundamental group of the complement of the trivial tangle. (For the precise algebraic conditions on $\phi$ needed for this construction, we defer to \autoref{def:bounding_hom}.) \autoref{sec:setup} is dedicated to this task. Our first main result (see \autoref{thm:main}), and the result underlying all of the constructions of spaces from group homomorphisms, is a method for constructing $H(\phi)$ and $T(\phi)$ algorithmically. This method is inspired by the procedure for computing the corresponding homomorphism given the topological data of the surface together with curves indicating the handlebody and the trivial tangle (see \autoref{lem:main}). When naively trying to construct diagrams for $H(\phi)$ and $T(\phi)$, we run into the possibility of constructing diagrams with too many curves. We fix this using bands to connect curves together, where the combinatorics of how the bands connect curves is guided by a process called Stallings folding \cite{stallings1983topology}, whose behaviour is guaranteed to serve our purposes by the conditions placed on $\phi$ (see the proof of \autoref{thm:main}).   

With this construction in hand, the constructions of the maps in \autoref{fig:chart} are straightforward and surjectivity follows from existence of the various geometric decompositions.  In \autoref{sec:3-manifolds} and \autoref{sec:4-manifolds} we discuss in detail the various geometric descriptions, the map in \autoref{fig:chart}, and the various algebraically defined relations that need to be collectively modded out by on the set of homomorphisms in order to obtain a bijection.  This later part involves setting up appropriate relations on the set of homomorphisms that mimic the geometric moves needed in the corresponding uniqueness theorem.  

For example, in the case of closed 3-manifolds, by way of Heegaard splittings, all such 3-manifolds can be described by a \textit{Heegaard diagram}, and two Heegaard diagrams result in the same 3-manifold if and only if they are related by a sequence of handleslides, diffeomorphisms of the surface applied to the diagram, and stabilizations (this is a diagrammatic restating of the Reidemeister-Singer theorem; see \autoref{thm:Alg_3_to_Man_3}).  In this case, we need to mod out our set of homomorphisms by an equivalence relation generated by three relations $\sim_h$, $\sim_m$, and $\sim_s$ ($h$ for handleslide, $m$ for mapping class, and $s$ for stabilization) that algebraically mimic the corresponding diagrammatic moves.  In \autoref{sec:closed_3-manifolds}, we carry out this process for closed 3-manifolds and in \autoref{sec:links_3-manifolds}, \autoref{sec:closed_4-manifolds}, and \autoref{sec:links_4-manifolds} we do the analogous procedure for links in 3-manifolds, closed 4-manifolds, and surfaces in 4-manifolds, respectively. In the ``relative'' cases of \autoref{sec:links_3-manifolds} and \autoref{sec:links_4-manifolds} there are additional relations needed corresponding to the different types of stabilizations available in these settings.   

It is unclear if this formalism will prove useful in deriving topological results (see subtitle). However, in \autoref{sec:examples} we give some additional examples and pose some questions regarding potential applications. One curious consequence of our work is that although smoothly knotted surfaces in the $4$-sphere cannot be distinguished by fundamental groups (or even their complements), they \emph{can} be distinguished by group trisections (see \autoref{cor:distinguish}).

\subsection*{Acknowledgments}

This project started at the 2020
Virtual Summer Trisectors Workshop,
which was financially supported by the NSF.
We are grateful for this wonderful opportunity to exchange ideas,
and would like to thank all of the participants for their input.
We want to give special credit
to Michelle Chu, David Gay, Gabriel Islambouli, Jason Joseph, Chris Leininger, 
Jeffrey Meier, Puttipong Pongtanapaisan, Arunima Ray, and Alexander Zupan. We also would like to sincerely thank the anonymous referees for their careful reading of our paper and their insightful comments. SB, MK, and BR would additionally like to thank the
Max Planck Institute for Mathematics in Bonn for its hospitality and financial support.

\tableofcontents

\section{Trivial tangles in handlebodies from algebra}
\label{sec:setup}
 
Let $\Sigma_g$ denote the genus $g$ oriented surface with basepoint $\ast$ and marked points $p_1, \ldots, p_{2b}$ as in \autoref{fig:surface}.
We abuse notation and let $p_1, \ldots, p_{2b}$ also denote the fundamental group elements as pictured.
Using the notation $[a,b] = aba^{-1}b^{-1}$ for the commutator of $a$ and $b$, we have
\[
    \pi_1(\Sigma_{g} - \{p_1, \ldots, p_{2b} \}, \ast)
    =
    \langle p_1, \ldots, p_{2b}, a_1, b_1, \ldots, a_g, b_g 
    \mid
    p_1 \cdots p_{2b} = [a_1,b_1] \cdots [a_g,b_g] \rangle,
\]
where the $a_{i}$, $b_{i}$ are generators of $\pi_1(\Sigma_g,\ast)$.
If $b = 0$, by convention this is the fundamental group of a closed genus $g$ surface
\[
    \pi_1(\Sigma_{g}, \ast)
    =
    \langle a_1, b_1, \ldots, a_g, b_g 
    \mid
    [a_1,b_1] \cdots [a_g,b_g] = 1 \rangle,
\]
while for $g = 0$ this is the group of a $2b$-times punctured sphere
\[
    \pi_1(\sphere{2} - \{p_1, \ldots, p_{2b} \}, \ast)
    =
    \langle p_1, \ldots, p_{2b} 
    \mid
    p_1 \cdots p_{2b} = 1 \rangle.
\]
Observe that for $b\geq 1$ this is a free group on $2g+2b-1$ generators, although it will be useful for us to instead remember the relation $p_1 \cdots p_{2b} = [a_1,b_1] \cdots [a_g,b_g]$, which we call the \emph{surface relation}.

\begin{figure}
    \centering
    \includegraphics[width=.9\linewidth]{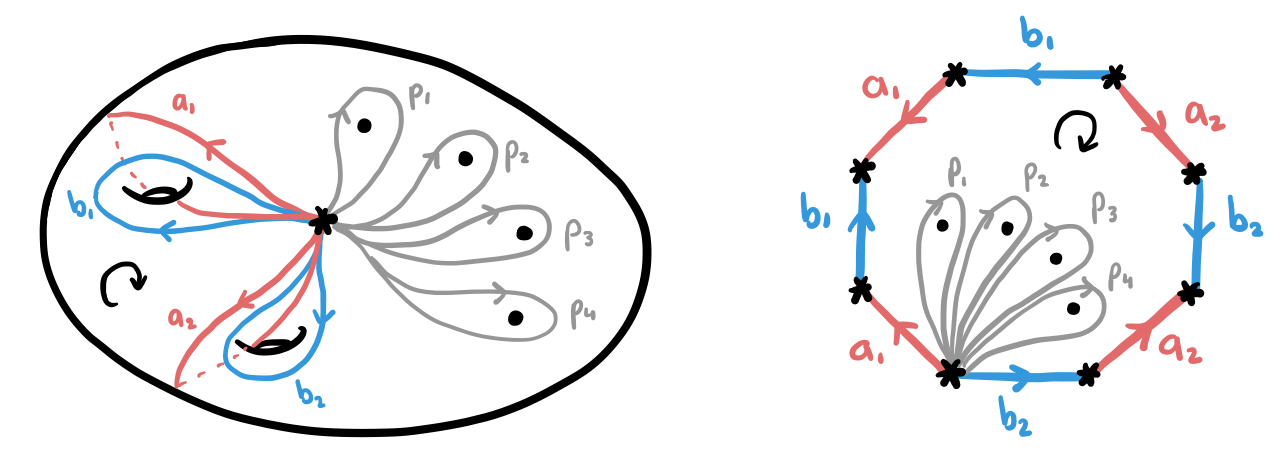}
    \caption{
    The genus $g$ oriented surface $\Sigma_g$ with basepoint $\ast$ and marked points $p_1, \ldots, p_{2b}$, represented two ways. Gluing the edges of the polygon on the right gives the surface on the left. The fundamental group $\pi_1(\Sigma_{g} - \{p_1, \ldots, p_{2b} \}, \ast)$ is generated by $a_{i}$, $b_{i}$, and (abusing notation) $p_{i}$. 
    \label{fig:surface}}
\end{figure}

We take \autoref{fig:surface} to be our standard model for the genus $g$ oriented surface $\Sigma_g$ with basepoint $\ast$ and marked points $p_1, \ldots, p_{2b}$. Proponents of the right-hand rule may be disappointed in our model, as each $a_i$ and $b_i$ pair violate this convention, but our choice of surface relation mandates the labels and orientations of the $a_i$ and $b_i$ curves. We choose to write the relation in this form for notational convenience on the algebraic side.

A genus $g$ \emph{handlebody} $H$ is a compact orientable 3-manifold
whose boundary is a genus $g$ closed surface, with the property
that $H$ can be cut along 2-dimensional disks such that the resulting space
is a set of 3-dimensional balls.
A $b$-component \emph{trivial tangle} $T$ in a handlebody $H$
is a collection of $b$ properly embedded arcs in $H$ such that
all of the arcs can be simultaneously isotoped into the boundary of $H$.
Given two handlebodies containing trivial tangles $(H_1, T_1)$ and $(H_2, T_2)$
with the property that $\partial H_1 = \partial H_2$ and $\partial T_1 = \partial T_2$,
we say that $(H_1, T_1)$ and $(H_2, T_2)$ are \emph{equivalent}
if there exists a diffeomorphism $H_1 \to H_2$ mapping $T_1$ to $T_2$
that is the identity on $\partial H_1$.
In the special case where $T_1$ and $T_2$ are empty,
we then say that the handlebodies $H_1$ and $H_2$ are equivalent.

We will be concerned with the set of equivalence classes of handlebodies and trivial tangles $(H,T)$ such that $\partial H = \Sigma_g$ and $\partial T = \{p_1, \ldots, p_{2b} \}$.
Any equivalence class of such a handlebody and trivial tangle can be described by a \emph{diagram} $\mathcal{D} = (C,S)$ on the surface $\Sigma_{g}$, made up of a collection of $g$ disjoint homologically linearly independent simple closed curves $C = \{C_1, \ldots, C_g\}$ (referred to as a \emph{cut system}) together with $b$ pairwise disjoint embedded arcs $S=\{S_1, \ldots, S_b\}$ whose endpoints are $\{p_1, \ldots, p_{2b}\}$ (referred to as a \emph{shadow diagram}).
In \cite{meier2018bridge4manifolds} this collection of cut system curves together with the shadow arcs is referred to as a ``curve-and-arc system.''

The handlebody $H$ can be constructed from the cut system by taking $\Sigma_{g} \times [0,1]$, attaching $g$ 3-dimensional 2-handles to $\Sigma_{g} \times \{1 \}$ along all of the curves $C_1,\ldots,C_g$,  and attaching a 3-ball to the resulting 2-sphere boundary component.
Given a shadow diagram in addition to the cut system, a trivial tangle in the resulting handlebody can be constructed by taking the arcs  of the tangle to be the union of $\{p_1, \ldots, p_{2b} \} \times [0, \frac{1}{2}]$ together with the arcs $S_1, \ldots, S_b$ in the shadow diagram considered as arcs in $\Sigma_{g} \times \{\frac{1}{2}\}$.
Conversely, given a handlebody $H$ and a trivial tangle $T$ we can obtain a diagram $\mathcal{D} = (C,S)$ for $(H,T)$ by choosing a set of $g$ disjoint embedded disks in $H$ that cut $H$ into a 3-ball and letting the cut system $C$ be the boundary of these disks, and taking an isotopy relative to the boundary of $T$ into $\Sigma_g$ and letting the shadow diagram $S$ denote the end result of this isotopy.   

We now give a name to these disks, as well as a few other disks referred to in some of the following proofs.
Given a handlebody $H$ and a trivial tangle $T$,
we refer to disjoint properly embedded disks bounded in $H$
by its cut system curves as \emph{cut disks},
and disjoint properly embedded disks that are the endpoint union
of a shadow arc and the associated tangle component as \emph{bridge disks}. 
One choice for these bridge disks is the track $S_{i} \times [0, \frac{1}{2}]$.
A \emph{bubble disk} in $H - T$ is a properly embedded disk
which encloses a bridge disk of the tangle strand; see \autoref{fig:bubble}.

\begin{figure}
    \centering
    \includegraphics[width=.4\linewidth]{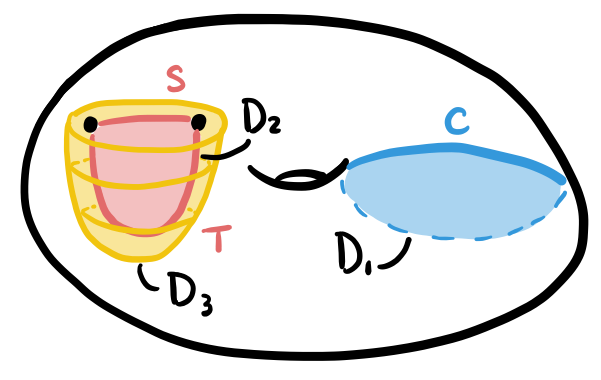}
    \caption{A cut disk $D_1$ bounded by a cut system curve $C$, a bridge disk $D_2$ bounded by a shadow arc $S$ and tangle strand $T$, and a bubble disk $D_3$.
    \label{fig:bubble}}
\end{figure}

\subsection{Bounding homomorphisms}

The following definition is motivated by \autoref{lem:main}, and examples are given in \autoref{ex:poincare} and \autoref{ex:main}.

\begin{definition}[Bounding homomorphism] \label{def:bounding_hom}
    A \emph{bounding homomorphism} is an epimorphism
    from a (possibly) punctured surface group to a free group
    \begin{equation*}
        \phi \colon
        \pi_1(\Sigma_{g} - \{p_1,\ldots,p_{2b} \}, \ast)
        \twoheadrightarrow
        \langle t_{1}, \ldots, t_{b},
        h_{1}, \ldots, h_{g} \rangle
    \end{equation*}
    with the following properties.
    \begin{enumerate}
        \item \label{def:bounding_hom_1} 
        The image of the subgroup generated by
        $a_1, b_1, \ldots, a_g, b_g$ surjects onto the quotient
        obtained by setting the $t_{i} = 1$.
        \item \label{def:bounding_hom_2} 
        Each $p_{i}$ maps to a conjugate of one
        of the $t_{j}$, where each of
        $t_{j}$ and its inverse $t_{j}^{-1}$ appears exactly once
        as the central letter.
        More precisely, there exists a bijection
        \[
        f \colon \{ p_{1}, \ldots, p_{2b} \}
        \rightarrow
        \{ t_{1}, t_{1}^{-1}, \ldots, t_{b}, t_{b}^{-1} \}
        \]
        and there are group elements
        $g_{i} \in \langle t_{1}, \ldots, t_{b},
        h_{1}, \ldots, h_{g} \rangle$
        with
        $\phi(p_{i}) = g_{i} f(p_{i}) g_{i}^{-1}$.
    \end{enumerate}
\end{definition}


There are two special cases worth mentioning. If $b = 0$, this is an
epimorphism from a closed surface group to a free group.
Topologically, this will correspond to having no tangle strands.
If $g = 0$, this corresponds to a trivial tangle in the 3-ball. The necessity of properties \eqref{def:bounding_hom_1} and \eqref{def:bounding_hom_2} will be seen in the proof of \autoref{thm:main}, but roughly speaking, property \eqref{def:bounding_hom_1} will allow us to distinguish between the handlebody and tangle, and property \eqref{def:bounding_hom_2} is a natural condition coming from the proof of \autoref{lem:main}. 

\begin{definition}[Topological realization]
    A \emph{topological realization} of a bounding homomorphism
    $\phi \colon
    \pi_1(\Sigma_{g} - \{p_1, \ldots ,p_{2b} \}, \ast)
    \twoheadrightarrow
    \langle t_{1}, \ldots, t_{b},
    h_{1}, \ldots, h_{g} \rangle$
    is a trivial tangle $T$ in
    a handlebody $H$ with $\partial H = \Sigma_g$ and $\partial T = \{p_1,\ldots,p_{2b}\}$
    such that there is an isomorphism
    $\psi \colon \pi_1(H - T, \ast)
    \xrightarrow{\cong}
    \langle t_1,\ldots, t_b, h_1,\ldots,h_g \rangle$
    that makes the following commute
    \begin{equation*}
    \begin{tikzcd}[row sep=tiny]
        & \pi_1(H - T, \ast) 
        \arrow{dd}{\psi}[swap]{\cong} \\
        \pi_1(\Sigma_{g} - \{p_1,\ldots,p_{2b} \}, \ast)
        \arrow[ur, "\iota_{*}", twoheadrightarrow]
        \arrow[dr, swap, "\phi", twoheadrightarrow] & \\
        & \langle t_1,\ldots, t_b, h_1,\ldots,h_g \rangle
    \end{tikzcd}
    \end{equation*}
    where the map
    $\iota_{*} \colon
    \pi_1(\Sigma_g - \{p_1,\ldots,p_{2b}\}, \ast) \twoheadrightarrow \pi_1(H-T, \ast)$
    is induced by inclusion.
\end{definition}

\begin{lemma}[Uniqueness of realization] \label{lem:uniqueness}
    Let $(H_{1}, T_{1})$ and $(H_{2}, T_{2})$ be two trivial tangles
    in handlebodies with $\partial H_1 = \partial H_2 = \Sigma_g$
    and $\partial T_1 = \partial T_2 = \{p_1,\ldots,p_{2b}\}$ so that there
    is an isomorphism $\rho$ between the fundamental groups of the
    tangle complements which makes the following diagram commute
    \begin{equation*}
    \begin{tikzcd}[row sep=tiny]
        & \pi_1(H_{1} - T_{1}, \ast) \arrow{dd}{\rho}[swap]{\cong} \\
    \pi_1(\Sigma_{g} - \{p_1,\ldots,p_{2b} \}, \ast)
    \arrow[ur, "\iota_{1*}", twoheadrightarrow] \arrow[dr, "\iota_{2*}", twoheadrightarrow] & \\
        & \pi_1(H_{2} - T_{2}, \ast)
    \end{tikzcd}
    \end{equation*}
    where the maps
    $\iota_{i*} \colon \pi_1(\Sigma_g - \{p_1,\ldots,p_{2b}\}, \ast) \twoheadrightarrow \pi_1(H_i-T_i, \ast)$, for $i=1,2$,
    are induced by inclusion.
    Then $(H_1,T_1)$ and $(H_2,T_2)$ are equivalent.  
\end{lemma}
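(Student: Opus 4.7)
The plan is to reduce the lemma to two base cases: the handlebody case $b=0$ (already cited as Jaco \cite{jaco1969splitting} and Leininger--Reid \cite{leininger2002corank}), and the elementary fact that two properly embedded arcs in $B^3$ with matching endpoints are ambient isotopic rel boundary. The bridge between the algebra of $\rho$ and the geometry is the loop theorem: since $\rho$ is an isomorphism, $\ker(\iota_{1*}) = \ker(\iota_{2*})$, so a simple closed curve in $\Sigma_g - \{p_1, \ldots, p_{2b}\}$ bounds an embedded disk in $H_1 - T_1$ exactly when it bounds one in $H_2 - T_2$.

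First, I would match the underlying handlebodies, ignoring the tangles. Each peripheral loop $p_j$ maps under $\iota_{i*}$ to a conjugate of a meridian of the arc of $T_i$ based at $p_j$, and the relation $\iota_{2*}(p_j) = \rho(\iota_{1*}(p_j))$ forces $\rho$ to send the normal closure of the $T_1$-meridians onto the normal closure of the $T_2$-meridians. These normal closures are exactly the kernels of the meridian-killing maps $\pi_1(H_i - T_i) \twoheadrightarrow \pi_1(H_i)$, so $\rho$ descends to an isomorphism $\bar{\rho}\colon \pi_1(H_1) \to \pi_1(H_2)$ compatible with the inclusion-induced surjections from $\pi_1(\Sigma_g)$. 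Applying Jaco's theorem in the case $b = 0$ yields a diffeomorphism $\Phi\colon H_1 \to H_2$ restricting to the identity on $\partial H_1 = \partial H_2$. Replacing $T_1$ by $\Phi(T_1)$, we reduce to the case $H_1 = H_2 = H$, where the two tangles share the same boundary data and complements with compatible isomorphic fundamental groups.

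Next, I would match the tangles by induction on $b$. The base case $b = 0$ is vacuous. For $b \geq 1$, choose a bridge disk for one arc $T_1^{(1)} \subset T_1$ and enlarge it to a bubble disk $E \subset H - T_1$, so $\partial E \subset \Sigma_g - \{p_1, \ldots, p_{2b}\}$ is a simple closed curve cobounding a twice-punctured disk $D \subset \Sigma_g$ containing the two endpoints of $T_1^{(1)}$. By the kernel equality, $\partial E$ bounds a properly embedded disk $E' \subset H - T_2$, and after standard innermost-disk and outermost-arc cleanup, $E' \cup D$ is an embedded sphere in $H$ bounding a ball $B \subset H$ by irreducibility of handlebodies. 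The meridian-pairing data preserved by $\rho$ forces $B$ to contain exactly one arc of $T_2$, with the same endpoints as $T_1^{(1)}$. Two arcs in a ball with fixed endpoints are ambient isotopic rel $\partial B$, so we can isotope $T_1^{(1)}$ onto this arc of $T_2$. The remaining $b - 1$ strands live in the closure of $H - B$, which is again a genus $g$ handlebody with the same boundary surface (up to identification), and a Seifert--van Kampen argument transports the compatibility of complements to the smaller tangles, so the induction closes.

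The main technical hurdle is step three: producing the disk $E'$ bounded by $\partial E$ in $H - T_2$ that in fact cobounds a ball with $D$ containing exactly one arc of $T_2$. The disk's existence comes straight from the loop theorem, but ensuring it is disjoint from $T_2$ and that the resulting ball contains a single $T_2$-arc requires an innermost-disk simplification against a bridge disk system for $T_2$, together with the observation that $\partial E$ encloses only two punctures of $\Sigma_g$. The matching of endpoints is then forced algebraically: the triviality of $\iota_{2*}([\partial E])$ in $\pi_1(H - T_2)$, expressed as a word in conjugates of $\iota_{2*}(p_j)$'s, is pinned down via $\rho$ by the analogous word in $\pi_1(H - T_1)$, so the two meridians involved must belong to a single $T_2$-arc whose endpoints are precisely $\partial T_1^{(1)}$.
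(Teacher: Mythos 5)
Your architecture is sound and genuinely different from the paper's: you reduce to a single handlebody via the $b=0$ case and then induct on the number of strands, capturing one arc of $T_2$ at a time inside a ball cut off by a bubble disk. The paper instead builds the diffeomorphism $H_1 \to H_2$ in one pass, mapping a full system of cut disks and bubble disks across via Dehn's lemma and extending over the complementary $3$-cells by the Alexander trick. Your step 1 is fine ($\rho$ carries the normal closure of the images of the $p_j$ to itself, hence descends to the handlebody groups), and your separation argument that $B$ contains exactly one arc of $T_2$ with endpoints $\partial T_1^{(1)}$ also works, since any arc of $T_2$ meeting $D$ is trapped in $B$ because $E'$ is disjoint from $T_2$. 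Once that is in place, the "algebraic pinning down via words in conjugates of the $\iota_{2*}(p_j)$" in your last paragraph is not needed.

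The genuine gap is the statement you lean on to close the inductive step: that two properly embedded arcs in $B^3$ with matching endpoints are ambient isotopic rel boundary. This is false --- there are knotted arcs in the $3$-ball, and your argument as written would prove every such arc unknotted. The correct statement, and the one the paper invokes, is that there is a unique \emph{trivial} $1$-strand tangle in the ball. The arc $\Phi(T_1^{(1)})$ is trivial by hypothesis, but you never verify that the arc $\alpha$ of $T_2$ captured in $B$ is trivial \emph{in $B$}: its bridge disk in $H$ has no reason to lie inside $B$. This is repairable --- for instance, $E'$ splits $\pi_1(H - T_2, \ast)$ as a free product with $\pi_1(B - \alpha, \ast)$ as a factor, so the latter is free; since $\homgy_1(B - \alpha) \isom \ZZ$ it is infinite cyclic, and Dehn's lemma/the loop theorem then shows $\alpha$ is unknotted in $B$ --- but that argument is missing, and the "elementary fact" as stated is wrong.
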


\begin{proof}
    We will construct a diffeomorphism 
    $H_1 \rightarrow H_2$ mapping $T_1$ to $T_2$
    that extends the identity on the boundary $\partial H_1$,
    by defining it first on 2-cells in the complement of $T_1$ in $H_1$,
    and then extending over 3-balls.
    
    Fix a cut system for the handlebody $H_1$ and
    shadow arcs for the tangle $T_1$, and
    choose whiskers connecting each curve and arc to the basepoint.
    Let $\lambda$ be a based cut system curve 
    which thus bounds a cut disk in $H_1$.
    From commutativity of the diagram, we know that
    $\lambda$ is homotopically trivial in the tangle complement
    $H_2 - T_2$, and so by Dehn's lemma it bounds an embedded disk in $H_2 - T_2$.
    Extend the identity on the boundary to the cut disk bounded
    by $\lambda$ in $H_1 - T_1$ by mapping it to the disk
    obtained by Dehn's lemma in $H_2 - T_2$.
    In the same manner, extend the map to a complete
    system of cut disks for the handlebody $H_1$.
    
    Let $\eta$ be the based boundary of a closed
    tubular neighborhood of one of the shadow arcs of $T_1$.
    This curve bounds a bubble disk in $H_1 - T_1$; recall \autoref{fig:bubble}.
    Again from commutativity of the diagram, we have that
    $\eta$ is null-homotopic in $H_2 - T_2$, and by another application
    of Dehn's lemma bounds a disk.
    Use these disks to extend the map over all of the bubble
    disks in $H_1 - T_1$.
    
    To finish the construction, use the Alexander trick to
    extend the map over the 3-cells in $H_1$.
    Observe that each of the bubble disks cuts off a single
    tangle strand on one of its sides.
    Combined with the observation that there is a unique trivial
    1-strand tangle in the 3-ball, this shows that the
    diffeomorphism $H_1 \rightarrow H_2$ we constructed
    can be arranged to map $T_1$ to $T_2$.
\end{proof}

    Now we will set up some notation in preparation for the following lemma. Let $\mathcal{D} = (C,S)$ be a diagram for a handlebody and trivial tangle $(H,T)$
    with $\partial H = \Sigma_g$ and $\partial T = \{p_1,\ldots,p_{2b}\}$,
    together with an ordering of the curves in the cut system $C_1,\ldots,C_g$, an ordering of the arcs in the shadow diagram $S_1,\ldots,S_b$,
    and a choice of an orientation for each of the $C_i$ and $S_j$.  
    Observe that cutting the surface $\Sigma_g$ along the cut system
    curves and shadow arcs creates a connected, planar surface, and thus we will be able to choose dual loops $h_i$ and $t_j$ as follows.

    For each curve $C_i$, pick a closed loop $h_i$
    based at $\ast$ that does not intersect the arcs
    $S_{k}$ for any $k$ and the curves $C_{k}$ for $k \ne i$,
    and that intersects the curve $C_i$ in a single point.
    Orient $h_i$ so that at the point of intersection of $h_i$ and $C_i$,
    the orientation of $h_i$ followed by the orientation of $C_i$ agrees
    with the ambient orientation of $\Sigma_g$ (which we have assumed to be clockwise; see \autoref{fig:surface}). 
    
    Similarly, for each arc $S_j$, choose a loop $t_j$ based at $\ast$
    that intersects $S_{j}$ in exactly one point
    and does not intersect any of the other arcs or  curves.
    Orient $t_j$ so that at the point of intersection of $t_j$ and $S_j$,
    the orientation of $t_j$ followed by the orientation of $S_j$
    agrees with the ambient orientation of $\Sigma_g$. 
    Therefore, we now have
    $h_i, t_j \in \pi_1(\Sigma_g - \{ p_1,\ldots,p_{2b} \}, \ast)$. See \autoref{fig:lemma}.

\begin{lemma}[Diagrams to maps] 
    \label{lem:main} Using the notation defined above, we have the following. 

    \begin{enumerate}
        \item \label{lem:part_1} The loops representing $h_i$ and $t_j$ are well-defined in $\pi_1(H - T, \ast)$, independent of choices.
        
        \item \label{lem:part_2} The map 
        \begin{align*}
            \psi_{\mathcal{D}} \colon
            \pi_1(H - T, \ast) &\to \left< t_1,\ldots, t_b, h_1,\ldots,h_g \right> \\
            h_i &\mapsto h_i \\
            t_j &\mapsto t_j
        \end{align*}
        is an isomorphism.
    
        \item \label{lem:part_3} The composition of the map induced by inclusion
        and $\psi_{\mathcal{D}}$
        \[
            \phi \colon
            \pi_1(\Sigma_{g} - \{ p_1,\ldots,p_{2b} \}, \ast)
            \xrightarrow{\iota_{*}}
            \pi_1(H-T, \ast) 
            \xrightarrow{\psi_{\mathcal{D}}}
            \left< t_1,\ldots, t_b, h_1,\ldots,h_g \right>
        \]
        on an element $\lambda \in \pi_1(\Sigma_{g} - \{p_1,\ldots,p_{2b} \}, \ast)$
        is computed as follows.
        Represent $\lambda$ by a based, closed, immersed curve on
        $\Sigma_{g}$ that is transverse to the
        cut system curves $C_{i}$ and the shadow arcs $S_{i}$.
        We call this curve again $\lambda$.
        The image of $\lambda$ under this composition of maps is given
        by traversing $\lambda$ and building a word in the elements
        $t_1,\ldots, t_b, h_1,\ldots,h_g$ and their inverses as follows.
        Start with the empty word.
        For each intersection between $C_i$ and $\lambda$ we concatenate
        $h_i^{\pm 1}$ on the right,
        and for each intersection between $S_j$ and $\lambda$ we concatenate $t_j^{\pm 1}$ on the right,
        where the sign is determined by the sign of the intersection
        of the oriented curves as in \autoref{fig:sign}. 
    
        \item \label{lem:part_4} The map $\phi$ in
        \eqref{lem:part_3} is a bounding homomorphism,
        and with the choice of isomorphism $\psi_{\mathcal{D}}$,
        the trivial tangle $(H,T)$ is a topological realization of $\phi$. 
    \end{enumerate}
\end{lemma}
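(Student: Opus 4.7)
The plan is to give $H - T$ an explicit structure exhibiting its fundamental group as the free group $F_{g+b}$ on canonical meridian classes, and then to match the images of $h_i$ and $t_j$ under inclusion with these meridians. Because $T$ is a trivial tangle, $H - T$ deformation retracts onto a wedge of $g + b$ circles consisting of one meridian of each cut disk $D_i$ bounded by $C_i$ and one meridian of each tangle strand $T_j$ (equivalently, of a bubble disk around $T_j$). With the orientation conventions of the setup, the based loop $h_i$ pushed slightly into the interior of $H - T$ is precisely the meridian of $D_i$, and analogously $t_j$ is the meridian of $T_j$. Any two based loops satisfying the defining properties of $h_i$ (meeting $C_i$ once with the prescribed sign and disjoint from the other cut system curves and shadow arcs) differ by a based loop in $\Sigma_g - (C \cup S)$, which by the remark preceding the lemma is connected and planar; such a loop becomes null-homotopic once pushed into the wedge-of-circles deformation retract away from the meridians. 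This establishes parts \eqref{lem:part_1} and \eqref{lem:part_2} simultaneously: the elements $h_i, t_j$ are well-defined in $\pi_1(H-T,\ast)$, and $\psi_{\mathcal{D}}$ sends free generators to free generators, so it is an isomorphism.

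For part \eqref{lem:part_3}, I would proceed by traversing $\lambda$ and concatenating letters at each crossing. Each maximal segment of $\lambda$ not meeting $C \cup S$ lies inside $\Sigma_g - (C \cup S)$; inside $H - T$, such a segment is homotopic rel endpoints into the zero-cell of the meridian wedge and so contributes the empty word. At each transverse intersection with a curve $C_i$ or arc $S_j$, a local model shows that the adjacent arcs of $\lambda$ concatenate to a loop encircling $D_i$ or $T_j$ exactly once, contributing $h_i^{\pm 1}$ or $t_j^{\pm 1}$ respectively, with the sign determined by comparing the ambient orientation of $\Sigma_g$ to the intersection data as in \autoref{fig:sign}.

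Part \eqref{lem:part_4} is then a direct consequence of parts \eqref{lem:part_1}--\eqref{lem:part_3}. For property \eqref{def:bounding_hom_1} of \autoref{def:bounding_hom}, setting each $t_j = 1$ in $\langle t_1, \ldots, t_b, h_1, \ldots, h_g \rangle$ corresponds geometrically to filling the tangle strands back in, yielding $\pi_1(H) = \langle h_1, \ldots, h_g \rangle$; the composition
\[
    \langle a_1, b_1, \ldots, a_g, b_g \rangle \hookrightarrow \pi_1(\Sigma_g - \{p_1, \ldots, p_{2b}\}, \ast) \xrightarrow{\phi} \langle t_j, h_i \rangle \twoheadrightarrow \langle h_i \rangle
\]
is surjective because the boundary inclusion $\Sigma_g \hookrightarrow H$ induces a surjection on $\pi_1$. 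For property \eqref{def:bounding_hom_2}, the generator $p_i$ is represented by a small based loop around the puncture $p_i$, which pushed into $H - T$ becomes a conjugate of the meridian of the tangle strand with endpoint at $p_i$; the two endpoints of each strand $T_k$ pair up to give conjugates of $t_k$ and $t_k^{-1}$ under the orientation convention, producing the required bijection $f$. The commutativity of the topological realization diagram holds by construction, since $\phi$ was defined as $\psi_{\mathcal{D}} \circ \iota_{\ast}$. The main obstacle will be the careful local verification in part \eqref{lem:part_3} that the intersection signs match the convention of \autoref{fig:sign}; the remaining steps are essentially bookkeeping once the handle decomposition of $H - T$ is in hand.
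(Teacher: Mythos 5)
Your proposal is correct and takes essentially the same route as the paper: both exhibit a spine of $H-T$ dual to the cut disks and bridge disks (so that $\pi_1(H-T,\ast)$ is free on the $h_i, t_j$), compute $\phi(\lambda)$ by recording signed intersections of $\lambda$ with those disks, and verify the two bounding-homomorphism conditions by filling in the tangle strands and by tracking a whiskered loop around each puncture. The only looseness is in part (1), where two admissible choices of $h_i$ do not literally ``differ by a based loop in $\Sigma_g - (C\cup S)$'' (their difference still meets $C_i$ twice, with opposite signs); the paper's cleaner formulation is to cut $H-T$ along the cut and bridge disks into a $3$-ball, where homotopy classes of paths between points are unique.
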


\begin{figure}[h]
    \centering
    \includegraphics[width=.45\linewidth]{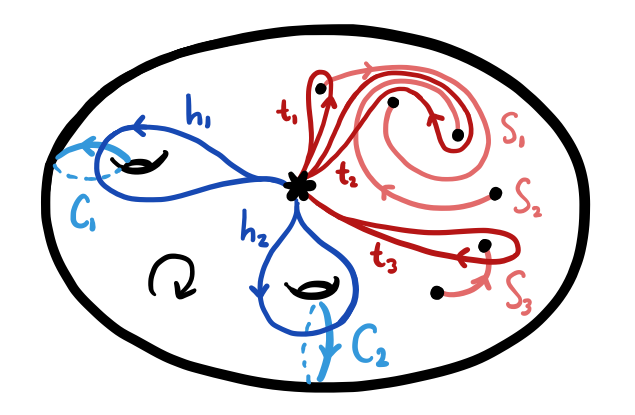}
    \caption{
    Finding loops $h_i$ dual to the cut system curves $C_i$, and loops $t_j$ dual to the shadow arcs $S_j$. 
    \label{fig:lemma}}
\end{figure}

\begin{figure}[h]
    \centering
    \includegraphics[width=.4\linewidth]{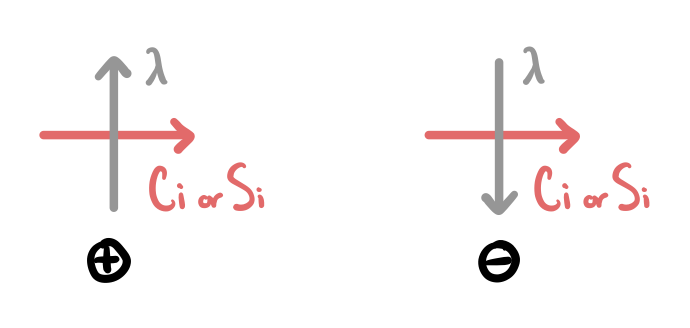}
    \caption{
        Sign convention for intersection points,
        where the pink arrow is a cut system curve $C_i$ or shadow arc $S_i$, and the gray arrow is an element in the fundamental group of the punctured surface, represented by a based,
        closed immersed curve $\lambda$ on the surface.
    \label{fig:sign}}
\end{figure}

\begin{proof}
Part \eqref{lem:part_1}:
Observe that the choices of these elements $h_i$ and $t_j$ are not unique
when considered as elements in $\pi_1(\Sigma_g, \ast)$;
see \autoref{fig:choice}. 
However we now prove that they are unique in the group $\pi_1(H - T, \ast)$. 
By choosing a collection of disjoint cut disks and bridge disks,
and cutting $H-T$ along these disks, we obtain a 3-ball as in \autoref{fig:disks}.
From this it follows that the choices of $h_i$ and $t_j$
are unique as elements of $\pi_1(H-T, \ast)$, because there is a unique
homotopy class of curves connecting points in a 3-ball.

\begin{figure}
    \centering
    \includegraphics[width=.75\linewidth]{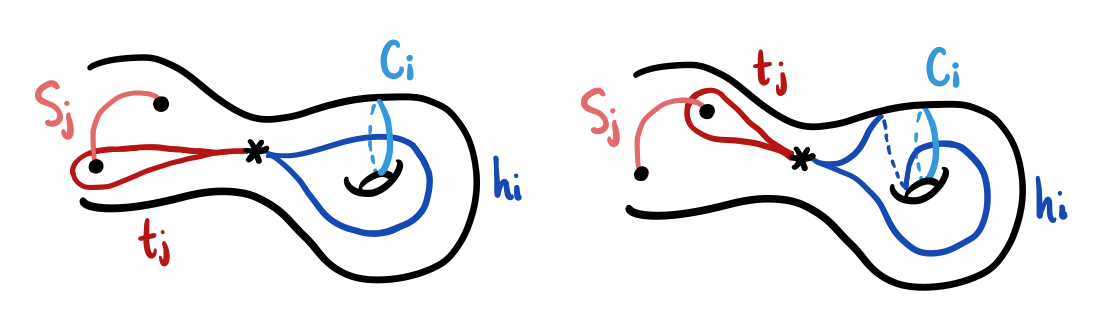}
    \caption{
        Inequivalent choices of $h_i$ and $t_j$ in
        the surface minus the points,
        which become isotopic in the handlebody minus the tangle determined
        by the $C_i$ and $S_j$.
    \label{fig:choice}}
\end{figure}

Part \eqref{lem:part_2}:
Now define the map 
\[
    \langle t_1,\ldots, t_b, h_1,\ldots,h_g \rangle \to \pi_1(H - T, \ast) 
\]
by sending $t_j \mapsto t_j$ and $h_i \mapsto h_i$.
We now argue that this map is an isomorphism.
First observe that $\pi_1(H - T, \ast)$ is a free group of
rank $g+b$, because $H-T$ deformation retracts onto
a spine obtained in the following way.
As above, cutting along (a choice of) cut disks
and bridge disks results in a 3-ball, and thus the $h_i$ and $t_j$ make up a
spine for the tangle complement $H - T$.
This also means that the homomorphism above is surjective,
and since free groups are Hopfian
it must be an isomorphism.

\begin{figure}
    \centering
    \includegraphics[width=.45\linewidth]{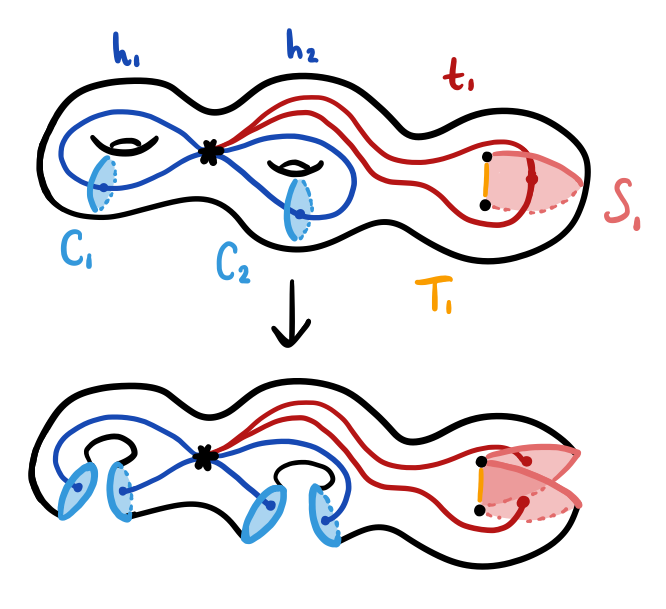}
    \caption{
        Cutting along cut disks (bounded by cut system curves $C_i$)
        and bridge disks (bounded by shadow arcs $S_i$ and tangles $T_i$) gives a 3-ball.
        The figure also shows how to build a spine of the
        tangle complement in the handlebody.
    \label{fig:disks}}
\end{figure}

Part \eqref{lem:part_3}: 
Using the spine from the proof of Part \eqref{lem:part_2}, we apply the cut system -- spine duality from \cite{johnson2006notes} to see that the image of an element $\lambda \in \pi_1(\Sigma_{g} - \{p_1,\ldots,p_{2b} \}, \ast)$ under these maps can be computed by recording intersections of $\lambda$ with the cut disks and bridge disks, since traversing an edge in the spine corresponds to hitting its dual disk.
For curves $\lambda$ that live on the surface $\Sigma_g$, these intersections occur on the boundaries of the disks. See \autoref{fig:disks}.

Part \eqref{lem:part_4}:
To check the first condition for a bounding homomorphism,
we glue 2-handles to the meridians of the tangle strands to kill
the normal closure of the generators $t_i$, and observe that the
$h_i$ make up a spine for the resulting handlebody. 

For the second condition, we represent
the generators $p_{i} \in \pi_1(\Sigma_{g} - \{p_1,\ldots,p_{2b} \}, \ast)$ in the following way.
Choose a simple system of rays (or whiskers) $r_{i}$ connecting
$\ast$ to each point in $\{p_1,\ldots,p_{2b} \}$.
Then $p_{i}$ corresponds to a loop that runs out along $r_{i}$,
goes around the puncture $p_{i}$ in a negative direction, as dictated by \autoref{fig:surface}, and returns along $r_{i}$.
On punctured surfaces this is also known as a Hurwitz arc system
\cite[Sec.\ 2.3]{kamada2002braid}.
The sequence in which the whisker of the
loop around $p_{i}$ intersects the cut system and tangle shadows will read off
a word $g_i$ in the free group.
Then running around the puncture reads off one of the generators $t_i$,
followed by returning to $\ast$ along $r_i$ contributing $g_{i}^{-1}$.
\end{proof}

\begin{example}[Handlebody with empty tangle]
    \label{ex:poincare}
    Here we give an example for the case $b = 0$. In this case the conditions on a bounding homomorphism $\phi \colon \pi_1(\Sigma_{g} , \ast) \twoheadrightarrow \langle h_{1}, \ldots, h_{g} \rangle $
    impose that it is an epimorphism from a surface group to a free group.
    Consider the genus 2 handlebody shown in \autoref{fig:poincare}, which is one of the handlebodies in a genus 2 Heegaard splitting of the Poincar{\'e} homology sphere.
    The bounding homomorphism for this handlebody can be read off by recording the sequence of intersections of the generators $a_{i}$ and $b_{i}$ with the curves $C_{j}$.
    \begin{align*}
        \pi_1(\Sigma_{2}, \ast)
        =\langle a_{1}, b_{1}, a_{2}, b_{2} \mid
        [a_{1}, b_{1}] [a_{2}, b_{2}] =1 \rangle
        & \twoheadrightarrow \langle h_{1}, h_{2} \rangle \\
        a_{1} & \mapsto h_{1}^{-1} \\
        b_{1} & \mapsto (h_{1} h_{2})^{5} h_{1}^{-2} \\
        a_{2} & \mapsto (h_{1} h_{2})^{5} h_{2}^{3} \\
        b_{2} & \mapsto h_{2}
    \end{align*}
\end{example}

\begin{figure}[h]
    \centering
    \includegraphics[width=0.5\linewidth]{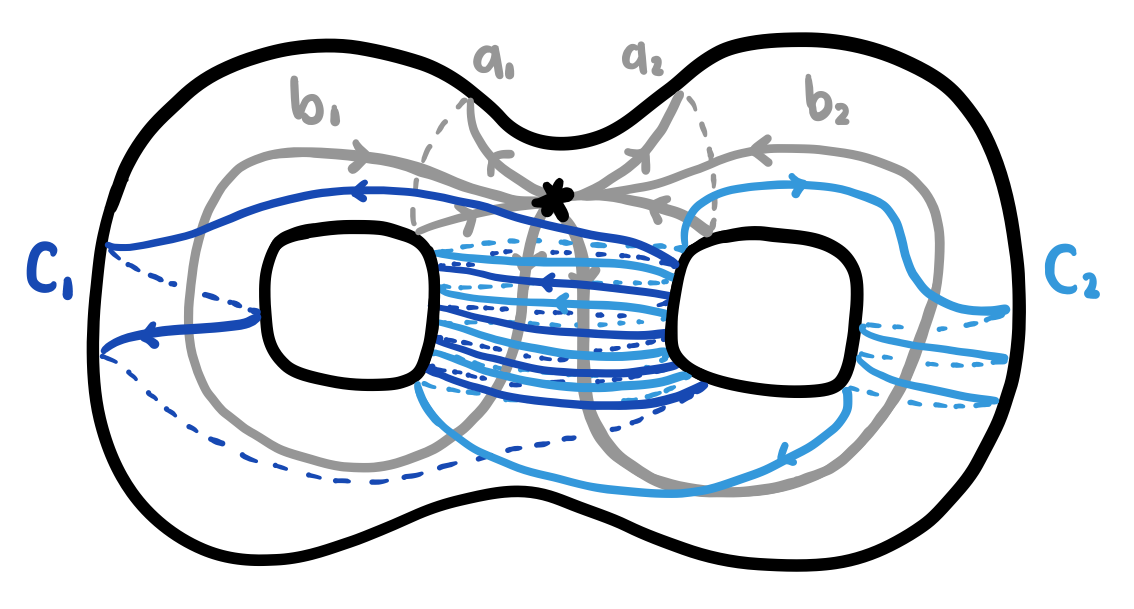}
    \caption{
        One of the handlebodies in a genus 2 Heegaard splitting of the Poincar{\'e} homology sphere, where the other handlebody is the solid genus 2 handlebody filling the interior. The generators $a_{1}, b_{1}, a_{2}, b_{2}$ of the fundamental group of the surface are shown in grey, and the two curves $C_{1}, C_{2}$ of the cut system are shown in light and dark blue.
        \label{fig:poincare}
        }
\end{figure}

\begin{example}[Running example]
    \label{ex:main}
    The following map is a bounding homomorphism which is realized by a trivial 2-bridge tangle in a solid genus 1 handlebody.
    It appears as the green tangle in the bridge trisection of $\RP{2}$ in $\CP{2}$ from \cite[Fig.~2]{meier2018bridge4manifolds} and \cite[Fig.~3]{joseph2022bridge}.
    See \autoref{fig:RP2_CP2}. We will use this bounding homomorphism as a running example in the proof of \autoref{thm:main}.
    \begin{align*}
        \pi_1(\Sigma_{1} - \{p_1,\ldots,p_{4} \}, \ast)
        =
        \langle p_{1}, p_{2}, p_{3}, p_{4},
        a_{1}, b_{1} 
        \mid
        p_1 p_2 p_3 p_4 = [a_1, b_1] \rangle
        & \twoheadrightarrow
        \langle t_{1}, t_{2},
        h_{1} \rangle \\
        p_{1} & \mapsto t_{2} h_{1} t_{1} h_{1}^{-1} t_{2}^{-1} \\
        p_{2} & \mapsto t_{2} \\
        p_{3} & \mapsto h_{1} t_{1}^{-1} h_{1}^{-1} \\
        p_{4} & \mapsto h_{1} t_{2}^{-1} h_{1}^{-1} \\
        a_{1} & \mapsto t_{2} h_{1} \\
        b_{1} & \mapsto h_{1}
    \end{align*}
\end{example}

\begin{figure}[h]
    \centering
    \includegraphics[width=.8\linewidth]{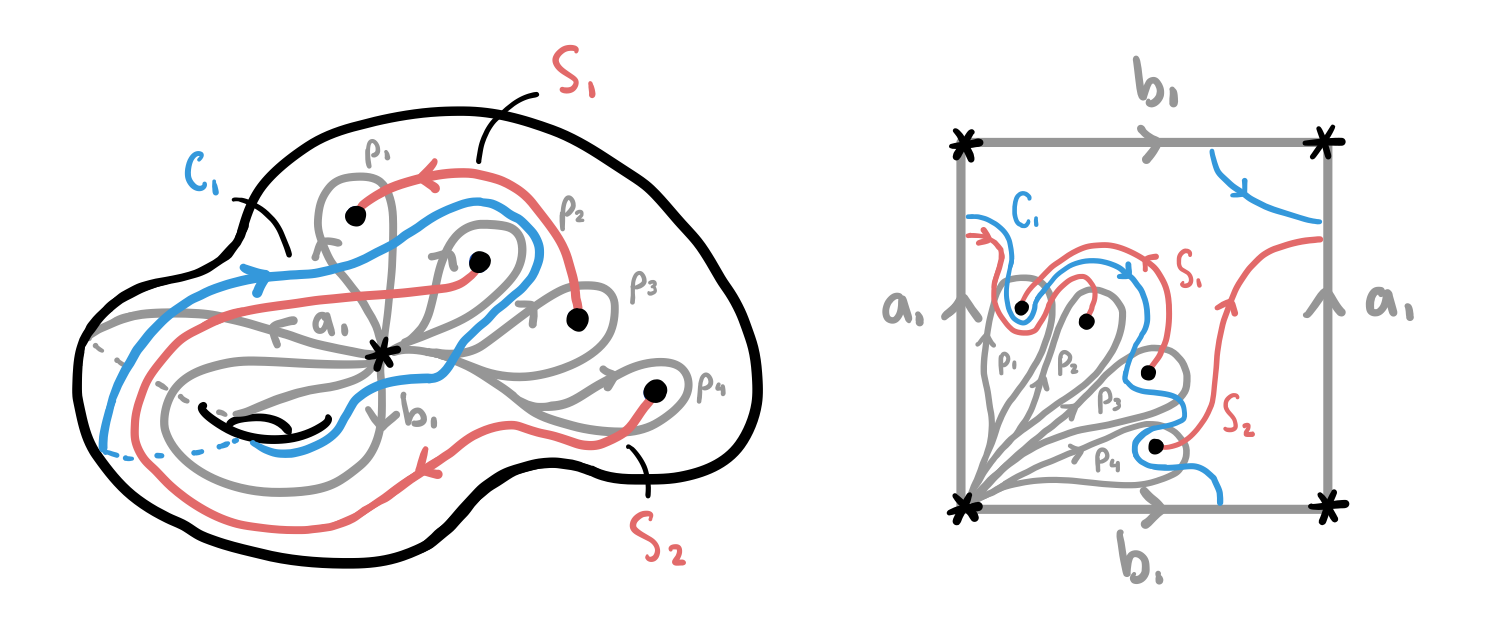}
    \caption{
    A trivial 2-bridge tangle in a solid genus 1 handlebody,
    which appears as the green tangle in the bridge trisection
    of $\RP{2}$ in $\CP{2}$ from \cite[Fig.~2]{meier2018bridge4manifolds} and \cite[Fig.~3]{joseph2022bridge}.
    \label{fig:RP2_CP2}}
\end{figure}

\subsection{Stallings folding}

We now discuss a technique due to Stallings called folding
\cite{stallings1983topology}
which in our context will be used to give
a topological realization of any bounding map.
There are several applications of folding
in the study of finitely generated free groups
(e.g. for the membership problem or determining
the index and normality of a subgroup,
see \cite[Chpt.~4]{clay2017officehours}). 
However for our purposes we only need one application, namely 
that folding gives a convenient algorithmic method to determine
if a set of elements $w_1, \ldots, w_k \in F_n$ generate $F_n$,
where $F_n$ is the free group generated by the elements $x_1,\ldots,x_n$.

We now describe this algorithm.
We begin by forming a directed graph $\Gamma$ with edges labeled  by elements
of $\{ x_1,\ldots,x_n \}$, where $\Gamma$ is topologically
a wedge of $k$ circles
and each of the circles is subdivided and labeled according to
the words $w_1,\ldots,w_k$ as in \autoref{fig:fold1}.
We can change $\Gamma$ by a move called a fold
to obtain a new such graph. 

\begin{definition}[Fold] \label{def:fold}
    A \emph{(Stallings) fold} is a move on a labeled, directed graph which takes two edges and replaces them with one single edge, with the following restrictions. The original edges must:
    \begin{enumerate}
        \item have the same label,
        \item share a vertex, and
        \item be oriented either both in or both out of the shared vertex.
    \end{enumerate}
    The label and orientation of the new edge are induced by those of the original edges.
    If the original edges share a second vertex, we call this a \emph{type I fold}, and if not, a \emph{type II fold}. In the case of the type II fold, the unshared vertices are identified together after replacing the original edges with the new edge. See \autoref{fig:fold2}. 
\end{definition}

\begin{figure}[h]
    \centering
    \includegraphics[width=.7\linewidth]{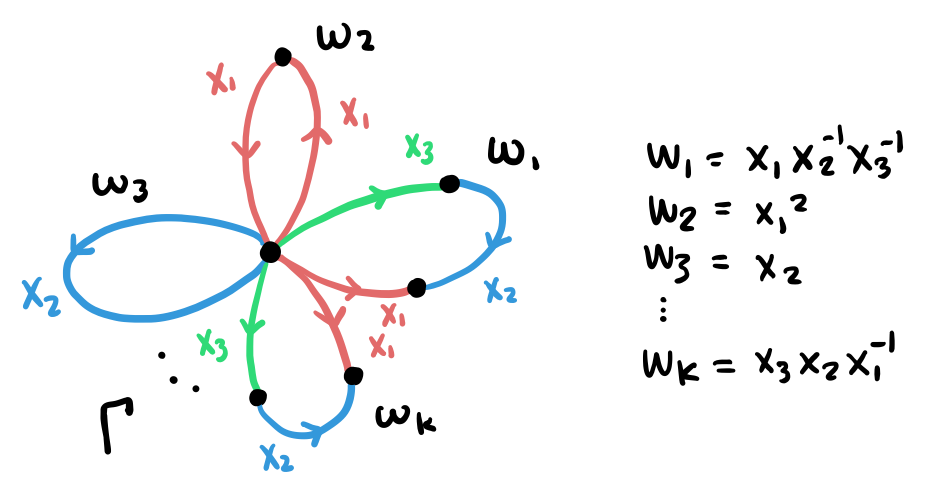}
    \caption{
     A directed graph $\Gamma$ with edges labeled by elements of
     $\{ x_1,\ldots, x_n\}$, where $\Gamma$ is topologically a wedge of $k$ circles and each of the circles is subdivided and labeled according to the words $w_1,\ldots,w_k$.
    \label{fig:fold1}}
\end{figure}

\begin{figure}[h]
    \centering
    \includegraphics[width=.8\linewidth]{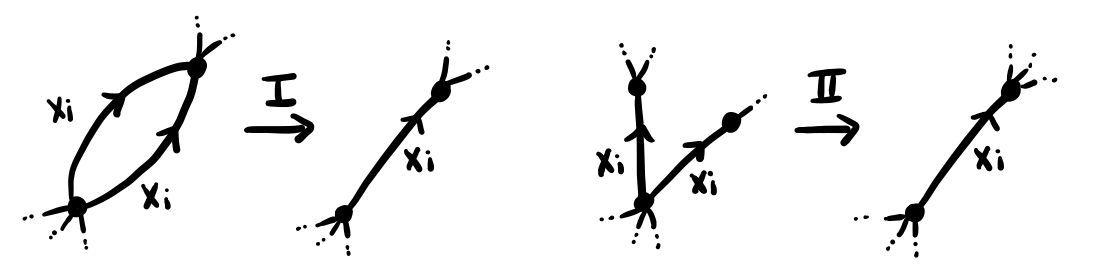}
    \caption{
        Types of folds. The labels and orientations of the edges being folded must match. 
    \label{fig:fold2}}
\end{figure}


\begin{lemma}[Stallings]
    \label{lem:fold}
    The elements $w_1,\ldots,w_k \in F_n$ generate $F_n$
    if and only if there exists a sequence of folds beginning with the graph $\Gamma$ given by
    $w_1,\ldots,w_k$ as in \autoref{fig:fold1}, and any such sequence of folds terminates in the graph $R_n$ in \autoref{fig:fold3}.  
\end{lemma}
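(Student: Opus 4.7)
The plan is to interpret folding as a subgroup-preserving simplification and to recognize the terminal graph as the Stallings core graph of the subgroup $H := \langle w_1, \ldots, w_k \rangle \le F_n$. The edge labels on $\Gamma$ define a natural label-reading map $\Gamma \to R_n$ which, on the level of $\pi_1$, identifies $\pi_1(\Gamma, \ast)$ with the free group on $k$ generators and sends the $i$-th generator to $w_i \in F_n = \pi_1(R_n)$. Hence the image of the induced homomorphism $\pi_1(\Gamma, \ast) \to F_n$ is exactly $H$.

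First I would check that each fold is subgroup-preserving and strictly reduces the number of edges. Since a fold by definition identifies two edges sharing a vertex, sharing a label, and sharing an orientation at that vertex, the induced labeling on the quotient graph $\Gamma'$ is well defined, and the quotient map $\Gamma \to \Gamma'$ is surjective on $\pi_1$. Consequently the image of $\pi_1(\Gamma', \ast) \to F_n$ remains $H$ at every stage, and since each fold removes exactly one edge, the folding process terminates after at most $E(\Gamma)$ steps at a graph $\widetilde{\Gamma}$ admitting no further folds.

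Next I would argue that ``no fold applies to $\widetilde{\Gamma}$'' is equivalent to the label-reading map $\widetilde{\Gamma} \to R_n$ being an \emph{immersion}, meaning locally injective on vertex stars: the applicability of a fold at a vertex $v$ is literally the existence of two edge-ends at $v$ with matching label and direction, which is the failure of local injectivity. Immersions of graphs enjoy unique path lifting, so the induced map $\pi_1(\widetilde{\Gamma}, \ast) \hookrightarrow F_n$ is injective, giving $\pi_1(\widetilde{\Gamma}, \ast) \cong H$ as a subgroup of $F_n$.

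Finally, I would deduce both the ``if and only if'' and the uniqueness of the terminal graph. If $H = F_n$, then the immersion $\widetilde{\Gamma} \to R_n$ realizes all of $F_n$; completing $\widetilde{\Gamma}$ to an honest cover of $R_n$ by attaching hanging trees at vertices with incomplete stars yields a connected cover whose $\pi_1$-image is still $F_n$, forcing it to be the one-sheeted cover $R_n \to R_n$, which in turn forces $\widetilde{\Gamma} = R_n$ to begin with. The converse is immediate since $\pi_1(R_n) = F_n$, and since the terminal graph depends only on $H$ (by the standard Stallings core-graph uniqueness), every maximal folding sequence from $\Gamma$ ends at the same $R_n$ when $H = F_n$. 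The main subtlety I anticipate is the final covering-space step: one has to carefully justify that an immersion with surjective $\pi_1$ map can only be the trivial one-sheeted cover, which is where the rank formula $E - V + 1 = n$ for a finite connected graph and the covering degree formula $\mathrm{rk}(\pi_1) = 1 + d(n-1)$ are combined to rule out $V \ge 2$ and then pin down $\widetilde{\Gamma} = R_n$.
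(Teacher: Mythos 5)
Your proposal is correct, but it is worth noting that the paper does not actually prove \autoref{lem:fold} at all: it simply defers to \cite[Thm.~4.7]{clay2017officehours}. What you have written is essentially the standard argument underlying that reference, carried out in full: folds preserve the image subgroup $H=\langle w_1,\ldots,w_k\rangle$ and strictly decrease the edge count, a fold-free graph is an immersion over $R_n$, immersions are $\pi_1$-injective, and the completion-to-a-covering step pins down the terminal graph. The one place that genuinely needs the care you flag is the last step, and your argument handles it correctly: a priori the terminal folded graph $\widetilde{\Gamma}$ could contain hanging trees (this really happens if some input word is unreduced, e.g.\ $w=x_1x_1^{-1}$ folds to a single hanging edge), but once the completed cover is identified with the degree-one cover $R_n\to R_n$, the graph $\widetilde{\Gamma}$ embeds in a one-vertex graph with $n$ loops and must surject onto $F_n$ on $\pi_1$, so it has no room for hanging trees and equals $R_n$. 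Two small points of bookkeeping: the completion of an immersion over $R_n$ to a genuine cover in general requires attaching infinite trees (harmless here, since adding trees does not change $\pi_1$ and the cover turns out to be degree one anyway), and the assertion that \emph{every} maximal folding sequence reaches the same terminal graph is the confluence of folding, which you cite rather than prove --- an acceptable level of detail given that the paper cites the entire lemma.
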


\begin{figure}
    \centering
    \includegraphics[width=.35\linewidth]{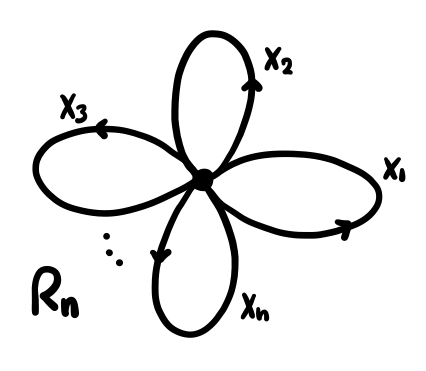}
    \caption{
        The directed graph $R_n$ is topologically a wedge of $n$ circles, with each circle uniquely labeled by an element $x_i$.
    \label{fig:fold3}}
\end{figure}


The proof of \autoref{lem:fold} follows as a special case of \cite[Thm.~4.7]{clay2017officehours}. Before we begin the proof of our main technical theorem, we mention one last lemma that will be used to check that a given set of closed curves constitutes a cut system.

\begin{lemma}
    \label{lem:cut}
    Let $w_1, \ldots, w_k$ be elements in a free group $F_n$
    with free generating set $x_1, \ldots, x_n$
    such that $w_1, \ldots, w_k$ generate $F_n$.
    Let $\exp_{x_i} \colon F_n \to \mathbb{Z}$
    denote the exponent sum homomorphism,
    that is, the signed count of occurrences of the letter $x_i$.
    Then the $n$ vectors 
    \[
        v_i = 
        (\exp_{x_i}(w_1), \exp_{x_i}(w_2), \ldots, \exp_{x_i}(w_k))
        \in \ZZ^{k}
    \]
    are linearly independent in $\ZZ^{k}$.  
\end{lemma}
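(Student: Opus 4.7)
The plan is to reduce this to linear algebra by abelianizing. The exponent sum homomorphism $\exp_{x_i} \colon F_n \to \ZZ$ factors through the abelianization $\operatorname{ab} \colon F_n \twoheadrightarrow F_n^{\mathrm{ab}} \cong \ZZ^n$, where $\exp_{x_i}$ is exactly the projection onto the $i$-th coordinate (with respect to the basis induced by the free generators $x_1,\ldots,x_n$). So the vectors $v_i$ are packaged together as the rows of the $n \times k$ integer matrix $M$ with entries $M_{ij} = \exp_{x_i}(w_j)$.

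The first step is to interpret $M$ as a linear map. Define a homomorphism $\phi \colon F_k \twoheadrightarrow F_n$ from the free group on generators $y_1,\ldots,y_k$ by $y_j \mapsto w_j$; this is surjective precisely because the $w_j$ generate $F_n$. Abelianizing yields a surjective homomorphism $\phi^{\mathrm{ab}} \colon \ZZ^k \twoheadrightarrow \ZZ^n$ whose matrix with respect to the standard bases sends the $j$-th basis vector to $\bigl(\exp_{x_1}(w_j),\ldots,\exp_{x_n}(w_j)\bigr)^T$, i.e.\ is exactly $M$.

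The second step is the linear algebra observation: since $\phi^{\mathrm{ab}}$ is surjective as a map $\ZZ^k \twoheadrightarrow \ZZ^n$, the matrix $M$ has rank $n$ (equivalently, after extending scalars to $\QQ$, the induced map $\QQ^k \to \QQ^n$ is still surjective because surjectivity is preserved). A matrix with rank equal to its number of rows has linearly independent rows, so $v_1,\ldots,v_n$ are linearly independent in $\ZZ^k$ (equivalently in $\QQ^k$, which is what linear independence over $\ZZ$ really means for integer vectors).

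There is no substantive obstacle here; the only thing to be careful about is keeping the row/column conventions straight and noting that surjectivity of a $\ZZ$-linear map between free abelian groups of finite rank forces the target's rank to bound the source's rank, giving the rank-$n$ conclusion for $M$.
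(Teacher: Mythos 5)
Your proof is correct and is essentially the same argument as the paper's: both form the $n\times k$ matrix of exponent sums, observe that its columns are the abelianizations of the $w_j$ and hence generate $\ZZ^n$, and conclude that the matrix has rank $n$ so its rows are linearly independent. The only cosmetic difference is that you package the surjectivity via a map $F_k \twoheadrightarrow F_n$ before abelianizing, whereas the paper argues directly with column rank equals row rank.
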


\begin{proof}
    Consider the $(n \times k)$-matrix where the rows are given by the vectors $v_{i}$.
    The $j$th column is made up of all of the exponent sums of the word $w_{j}$,
    and thus computes the image of $w_{j}$ under the abelianzation map
    $\mathrm{ab} \colon F_{n} \rightarrow \ZZ^{n}$, where the images
    of the generators $x_{i}$ form the basis of the codomain.
    Since the words $w_1, \ldots, w_k$ generate $F_{n}$,
    the columns of the matrix generate $\ZZ^{n}$
    and thus its column rank is $n$.
    The claim now follows from the equality of row and column rank
    and the observation that $n \le k$.
\end{proof}

\begin{theorem}[Existence of realization] \label{thm:main}
    For every bounding homomorphism
    \begin{equation*}
        \phi \colon
        \pi_1(\Sigma_{g} - \{p_1,\ldots,p_{2b} \}, \ast)
        \twoheadrightarrow
        \langle t_{1}, \ldots, t_{b},
        h_{1}, \ldots, h_{g} \rangle
    \end{equation*}
    there exists a topological realization
    (where the realizing tangle $(H, T)$ is unique by \autoref{lem:uniqueness}) and further, we give a (polynomial-time) algorithm to construct a diagram for the topological realization.
\end{theorem}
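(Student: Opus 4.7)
The strategy is to manufacture an explicit diagram $\mathcal{D}=(C,S)$ on $\Sigma_g$ whose read-off map via \autoref{lem:main} is exactly $\phi$; uniqueness of the resulting tangle then follows from \autoref{lem:uniqueness}. Put differently, I plan to invert the diagram-to-map recipe: the words $\phi(a_i)$, $\phi(b_i)$, $\phi(p_j)$ prescribe the labeled, signed intersections that the sought cut curves $C_i$ and shadow arcs $S_j$ must make with the standard generating curves on $\Sigma_g$, and the task is to produce an embedded diagram realizing that intersection data.

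First I would work on the polygon model of $\Sigma_g-\{p_1,\ldots,p_{2b}\}$ from \autoref{fig:surface}, where the $a_i,b_i$ appear as boundary edges and each $p_j$ is reached by a short whisker. For every letter of every word $\phi(a_i)$, $\phi(b_i)$, $\phi(p_j)$, I drop a tick mark on the corresponding edge or whisker, labeled (and oriented) by that letter, and germinate each tick mark into a short oriented arc segment pushed into the interior of the polygon. At this naive stage every letter contributes its own segment, so there are far more than the targeted $g+b$ curves and arcs.

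The second step fuses these segments into the final diagram by attaching bands, with the combinatorics of the fusions dictated by Stallings folding. Assemble the segments into a labeled directed graph $\Gamma$ as in \autoref{fig:fold1}, whose loops spell out the words $\phi(a_i)$, $\phi(b_i)$, and $\phi(p_j)$. Property \eqref{def:bounding_hom_1} of \autoref{def:bounding_hom} guarantees that after quotienting by the $t_j$ the images $\phi(a_i),\phi(b_i)$ generate $\langle h_1,\ldots,h_g\rangle$, and property \eqref{def:bounding_hom_2} guarantees that each $t_j^{\pm 1}$ appears exactly once as a central letter of some $\phi(p_i)$. Together with surjectivity of $\phi$, these force any Stallings fold sequence on $\Gamma$ to terminate in $R_{g+b}$ by \autoref{lem:fold}. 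Each fold translates into a band between two arc segments sharing a label and compatible orientation at a common endpoint, merging them into one.

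The principal obstacle is geometric rather than combinatorial: the bands must be realized as a disjoint collection of embedded bands on $\Sigma_g$, so that no unwanted intersections are introduced at any stage. I would handle this by ordering the folds outward from shared endpoints and using the planarity of the polygon model together with small isotopies in the complement of the existing diagram to push each band into a fresh region; type II folds require first isotoping the two unshared endpoints together through this complement, which is possible because the complement of a partial diagram remains a surface with boundary. Once all folds are executed, the surviving segments-plus-bands assemble into $g$ oriented simple closed curves $C_i$ and $b$ oriented arcs $S_j$ with endpoints $\{p_1,\ldots,p_{2b}\}$. Homological independence of the $C_i$ follows from \autoref{lem:cut} applied to the generating words in $\langle h_1,\ldots,h_g\rangle$, so $(C,S)$ is a genuine diagram, and it realizes $\phi$ by construction. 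Since the number of folds is linear in the total word length of $\phi$'s images and each fold is performed in constant bookkeeping time, the algorithm is polynomial.
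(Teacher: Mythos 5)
Your overall plan is the paper's: read off the intersection data from the words $\phi(a_i)$, $\phi(b_i)$, $\phi(p_j)$ as labeled dashes on the polygon model, use Stallings folding to control the number of components, and finish with \autoref{lem:cut}, \autoref{lem:main}, and \autoref{lem:uniqueness}. But there are two genuine gaps. First, you never explain how the isolated letter-segments get connected into embedded closed curves and arcs, and folding cannot do this job. A Stallings fold only ever identifies two edges carrying the \emph{same} generator label, so in your scheme the bands only ever join segments with the same label; nothing in the folding combinatorics tells you \emph{which} $h_i$-dash connects to which, nor does it ever produce a closed component (joining $L$ intervals by $L-n$ bands, each merging two distinct components, yields $n$ arcs, never the $g$ closed curves a cut system requires). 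The paper's first stage supplies exactly this missing pairing: it forms the two words $w_1=\phi(p_1)\cdots\phi(p_{2b})$ and $w_2=\phi(a_1)\phi(b_1)\cdots\phi(a_g)\phi(b_g)$, which are equal in the free group because $\phi$ respects the surface relation, and draws an arc between two dashes each time the corresponding letters cancel in the free reduction of $(w_2')^{-1}w_1'$ to the trivial word. This is what turns the discrete intersection data into an embedded preliminary diagram (with possibly too many components); folding is then used only afterwards, to band together whole components carrying the same label. Your proposal omits the surface relation and this reduction entirely, so the construction as written does not produce a diagram.

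Second, the existence of the bands is the hardest part of the paper's argument and you dispose of it in one sentence. ``Planarity of the polygon model'' and ``small isotopies in the complement'' do not suffice: the two components to be banded must lie on the boundary of a common complementary region, the band must be disjoint from all other curves and arcs already drawn, and it must be orientation-preserving in the sense needed for the banded curve to remain consistently labeled. The paper proves this by an induction on the number of folds performed, with a case analysis (cases I, II, III of the incidence and orientation of the two edges in $\Gamma$) and, notably, must also establish the existence of certain orientation-reversing bands that are never used in the construction but are needed to make the induction close up. Without an argument of this kind your algorithm could get stuck at some fold with no admissible band available.
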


\begin{proof}

The plan is to describe a topological realization of $\phi$ by an explicit diagram that we will produce in such a way that \autoref{lem:main} ensures that it is indeed a topological realization of $\phi$.
In the \hyperlink{proof:first_stage}{first stage} we produce a preliminary diagram $\mathcal{D}$ that, if it were to consist of only a cut system and a shadow diagram, would be a realization of $\phi$. This preliminary diagram will not necessarily be unique, and could possibly include ``extra'' components.
After this, in the \hyperlink{proof:second_stage}{second stage} we will apply Stallings folding from \autoref{lem:fold} to guide band sums which eliminate any extra
components of the preliminary diagram. At this point, we will have obtained a unique diagram, regardless of the choices made while producing the preliminary diagram.
Finally, in the \hyperlink{proof:third_stage}{third stage} we argue why the necessary bands always exist.

\vspace{2mm}
\hypertarget{proof:first_stage}{\textbf{First stage (Preliminary diagram):}}
For the first stage, it is necessary to initially make the distinction between words in the generators and inverses of generators in a free group, and elements of the free group.
For now, when we write $\phi(p_i), \phi(a_j), \phi(b_k)$, we mean the unique freely reduced words representing these elements.
To produce $\mathcal{D}$, look at two (not necessarily freely-reduced) words, the first given by concatenating the freely-reduced $\phi(p_i)$, namely  
\[
    w_1 = \phi(p_1) \phi(p_2) \cdots \phi(p_{2b})
\]
and the second given by concatenating the freely-reduced $\phi(a_i), \phi(b_j)$, namely
\[
    w_2 = \phi(a_1) \phi(b_1) \cdots \phi(a_g) \phi(b_g).
\]
Note that since $\phi$ is a homomorphism these words are equal as elements of
$\langle t_{1}, \ldots, t_{b},h_{1}, \ldots, h_{g} \rangle$.

The first step in drawing the preliminary diagram $\mathcal{D}$
is to represent $\Sigma_g$ by a polygon with edges $a_1,b_1,\ldots,a_g,b_g$
and punctures as in \autoref{fig:ex1}, which will begin our main running example for the proof. (If $g=0$, view the sphere as the plane with a point at infinity, place all of the punctures in a line, and place the basepoint at infinity. See \cite[Sec.~4.2.3]{blackwell2022combinatorial} for an example of this.) Mark each of the circles representing $a_1,b_1,\ldots,a_g,b_g$ on $\Sigma_g$ with ``oriented dashes'' labeled by the respective elements in $\phi(a_1),\phi(b_1),\ldots,\phi(a_g),\phi(b_g)$. Additionally, mark the loops around each punctured point $p_i$ on $\Sigma_g$ with ``oriented dashes'' labeled by the respective elements in $\phi(p_1),\ldots,\phi(p_{2b})$.

\begin{figure}
    \centering
    \includegraphics[width=.75\linewidth]{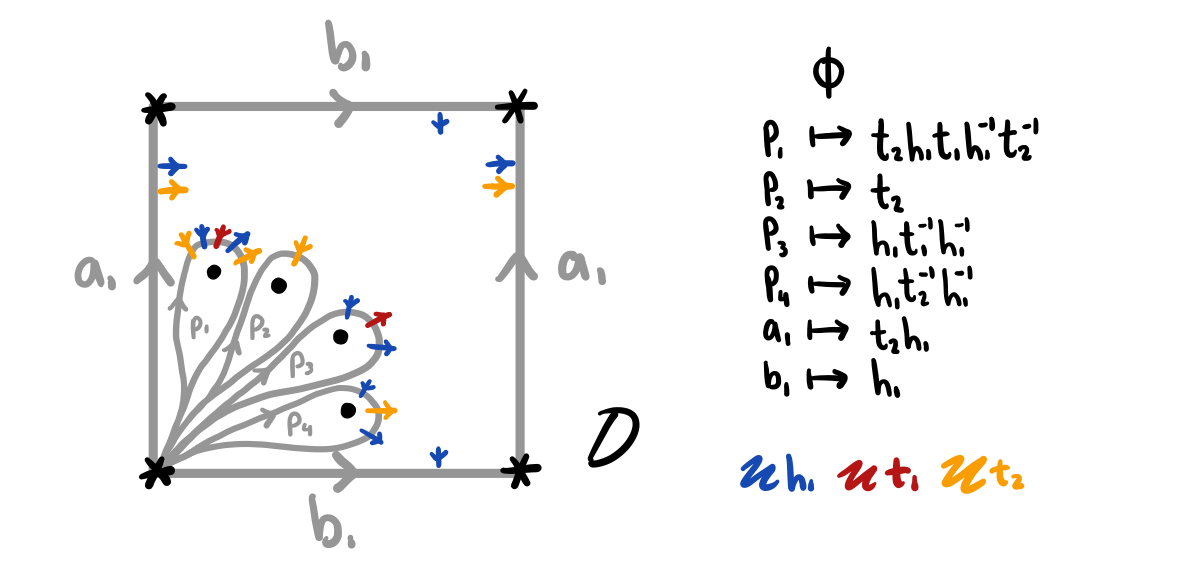}
    \caption{
        Our main running example for this proof
        comes from the bounding homomorphism in \autoref{ex:main}.
        Represent $\Sigma_1=T^2$ by a polygon with punctures. Mark each of the circles representing $a_1$ and $b_1$ with ``oriented dashes'' labeled (using color) by the respective elements in $\phi(a_1)$ and $\phi(b_1)$. Additionally, mark the loops around each punctured point $p_i$ with ``oriented dashes'' labeled by the respective elements in $\phi(p_1),\ldots,\phi(p_{4})$. Note that for the oriented dashes we use colors corresponding to $h_1$, $t_1$, and $t_2$ to see this correspondence easily, but keep in mind that throughout this running example we are really recovering the curves $C_1$, $S_1$, and $S_2$, which is why the colors in our end result, \autoref{fig:ex8}, differ slightly from those in \autoref{ex:main}.
    \label{fig:ex1}}
\end{figure}

The second step is to freely reduce both of the words $w_1$ and $w_2$, and as cancellations occur in the free reductions, draw arcs between the corresponding dashes as in \autoref{fig:ex2}.
The arcs retain the respective labelings and have orientations induced by the dashes.
Let $w_1'$ and $w_2'$ denote the resulting freely-reduced words, which are equal as freely-reduced words since they are equal as elements of the free group.
Therefore $(w_2')^{-1}w_1'$ freely reduces to yield the trivial word.

\begin{figure}
    \centering
    \includegraphics[width=.75\linewidth]{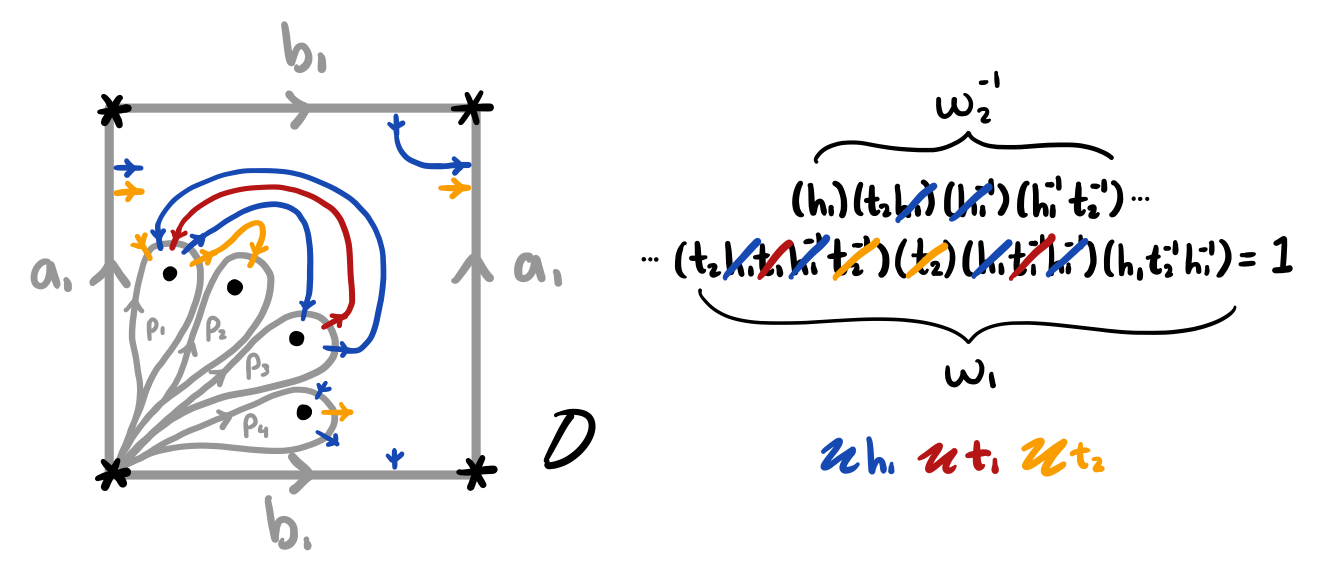}
    \caption{
        Running example. Drawing arcs between the dashes as corresponding cancellations occur in the free-reductions of the words $w_1$ and $w_2$. There are multiple ways to freely-reduce these words; this running example shows one possible choice. In fact, there exists a different choice of cancellations here which would avoid the need to proceed to the second stage.
    \label{fig:ex2}}
\end{figure}

The final step
is to continue to carry out this free reduction
down to the trivial word,
drawing arcs with each cancellation as in the second step.
See \autoref{fig:ex3}. This reduction will not necessarily be unique and could produce different diagrams, but this indeterminacy will be fixed in the \hyperlink{proof:second_stage}{second stage}.
The loops around the punctures each intersect an odd number of dashes, which are now each part of an arc.
Connect the middle dash to the punctured point, and connect the rest of the dashes in pairs that go around the opposite side of the puncture, as in \autoref{fig:ex4}.

\begin{figure}
    \centering
    \includegraphics[width=.8\linewidth]{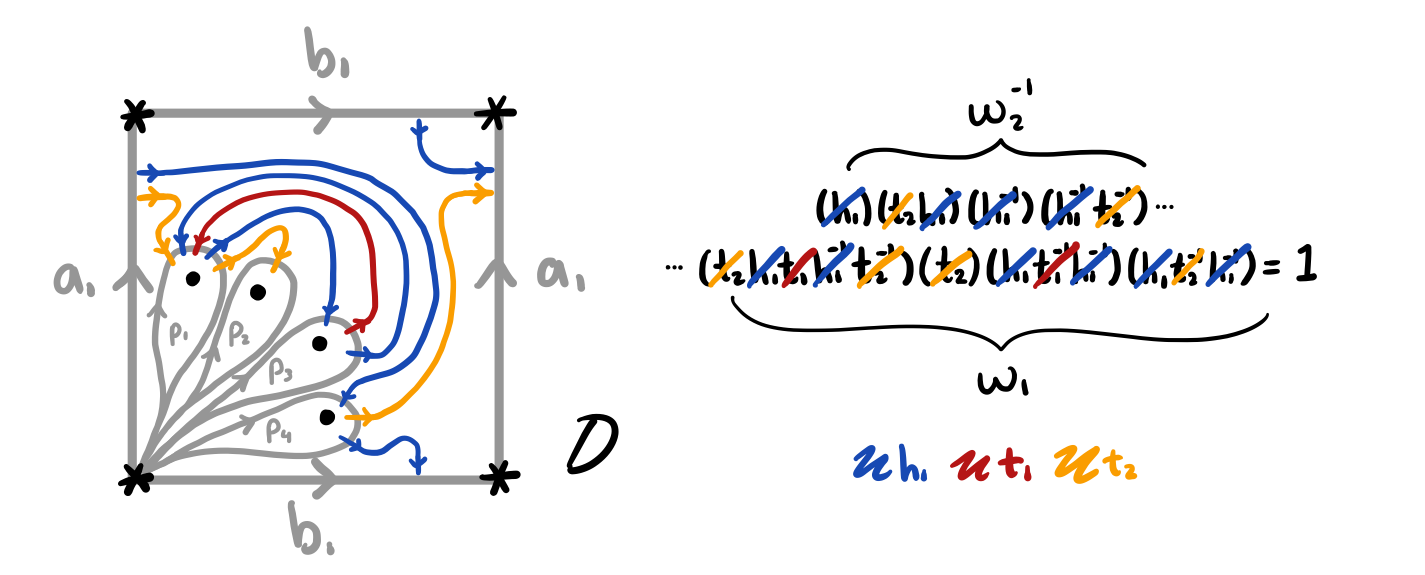}
    \caption{
        Running example. Continuing to draw arcs between dashes until all cancellations have occurred. 
    \label{fig:ex3}}
\end{figure}

\begin{figure}
    \centering
    \includegraphics[width=.6\linewidth]{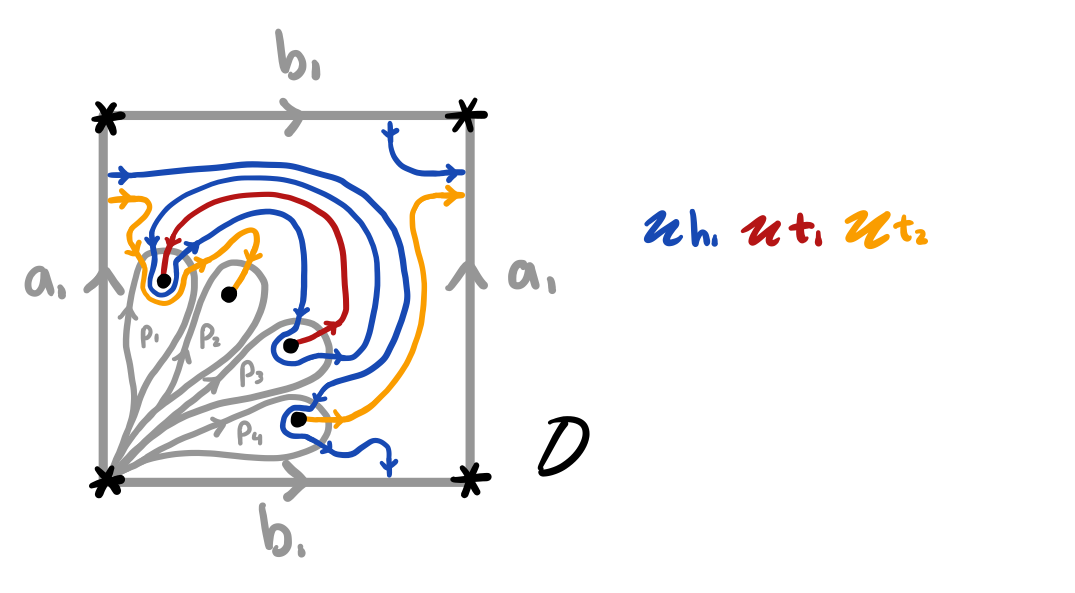}
    \caption{
        Running example. Connecting the middle dash
        to the puncture and the other dashes
        in pairs.
    }
    \label{fig:ex4}
\end{figure}

We consider the preliminary diagram $\mathcal{D}$ to be the resulting collection of disjoint, oriented arcs
and closed curves on $\Sigma_g$, each labeled by a generator of
$\langle t_{1}, \ldots, t_{b},h_{1}, \ldots, h_{g} \rangle$. In general $\mathcal{D}$ at this stage will consist
of too many closed loops and will not give a realization of $\phi$.
Note that by condition \eqref{def:bounding_hom_1} in the definition of a bounding homomorphism,
there is at least one closed curve with each label $h_j$.
Similarly, by condition \eqref{def:bounding_hom_2}, there is exactly one arc with each label $t_i$.
However, in general $\mathcal{D}$
will consist of additional closed curves with labels $h_j$ and $t_j$. 

Suppose that at this stage there are no such additional curves in $\mathcal{D}$,
namely, for each $h_i$ there is exactly one closed curve with that label
and for each $t_j$ there are no closed curves with that label. We now show that the curves labeled by the elements $h_i$ form a cut system, namely that they are homologically linearly independent.  

Let $C_1, \ldots, C_g$ denote the closed curves in this case,
with corresponding labels $h_1, \ldots, h_g$, respectively.
The curves $a_1,b_1, \ldots, a_g,b_g$ in \autoref{fig:surface}
give a basis for $H_1(\Sigma_g; \mathbb{Z})$. 
Using that the intersection product on $H_1(\Sigma_g; \mathbb{Z})$ is given by
$a_i \cdot b_j = \delta_{ij}$ we find that
by construction, in $H_1(\Sigma_g; \mathbb{Z})$ we have
\[
    C_i =
    \exp_{h_i}(\phi(b_1)) a_1 + \exp_{h_i}(\phi(a_1)) b_1
    + \cdots +
    \exp_{h_i}(\phi(b_g)) a_g + \exp_{h_i}(\phi(a_g)) b_g
\]
where
\[
    \exp_{h_i} \colon \langle t_1, \ldots, t_b, h_1, \ldots, h_g \rangle \to \ZZ
\]
maps an element to the exponent sum of $h_i$.
By property \eqref{def:bounding_hom_1}
of being a bounding homomorphism,
together with \autoref{lem:cut},
it follows that the elements of $C_i$ are linearly independent
in $H_1(\Sigma_g; \mathbb{Z})$ and therefore in this case the curves
$C_1, \ldots,C_g$ form a cut system.

From this it follows that, in this case, $\mathcal{D}$ is a diagram for a handlebody and trivial tangle. Furthermore, in this case, we know by \autoref{lem:main} that $\mathcal{D}$ is a diagram for a topological realization of $\phi$.

\vspace{2mm}
\hypertarget{proof:second_stage}{\textbf{Second stage (Stallings folding):}}
In the second stage, we will modify the preliminary diagram $\mathcal{D}$
in steps by performing orientation-preserving band sums
between components in order to eliminate the extra closed curves, as in \autoref{fig:ex8}. These bands may pass through the $a_i, b_j,$ and $p_k$ representing the elements in $\pi_1(\Sigma_g - \{p_1,\ldots,p_{2b} \}, \ast)$, but may not pass through the curves and arcs we drew in the previous stage.
Assuming for a moment that we successfully banded together all of the curves and arcs, so that there is exactly one curve/arc with each label, the above argument still applies and the resulting diagram will be a diagram for a cut system and trivial tangle (namely, it will consist of a cut system together with a shadow diagram) since each band introduces a pair of canceling intersections as in \autoref{fig:band1}.
Furthermore, because the added intersections cancel, applying \autoref{lem:main} shows that the resulting diagram is in fact a diagram for a topological realization of $\phi$.   

\begin{figure}[h]
    \centering
    \includegraphics[width=.3\linewidth]{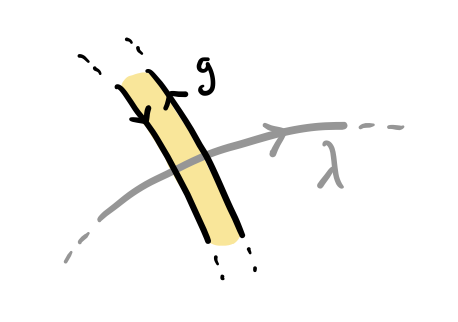}
    \caption{
        Following the curve $a$ running across a band,
        we read off the canceling pair of intersections
        $g g^{-1}$.
    \label{fig:band1}}
\end{figure}

The algorithm for finding these bands is as follows. Note that here and beyond, we use the word \textit{color} to describe the ``labels'' as mentioned in \autoref{def:fold}, and give the word \textit{label} a new, specific meaning. (In particular, we use \textit{color} colloquially to mean ``labeled by a color,'' as opposed to the graph-theoretic notion.)
Let $\Gamma$ be the graph that is topologically a wedge of circles such that each circle is \emph{colored} by the words $$\phi(p_{1}),\ldots,\phi(p_{2b}),\phi(a_1),\phi(b_1),\ldots,\phi(a_g),\phi(b_g)$$ as in \autoref{fig:ex5}. Add an additional \emph{label} to each edge of $\Gamma$,
namely label each edge of $\Gamma$ with the corresponding closed curve or arc
in the preliminary diagram $\mathcal{D}$ as in
\autoref{fig:ex6}.
We will modify the graph $\Gamma$ by folding and this will dictate how to modify the diagram $\mathcal{D}$ by band sums. At each stage, we will refer to the new graph again by $\Gamma$ and the new diagram again by $\mathcal{D}$.

\begin{figure}
    \centering
    \includegraphics[width=.7\linewidth]{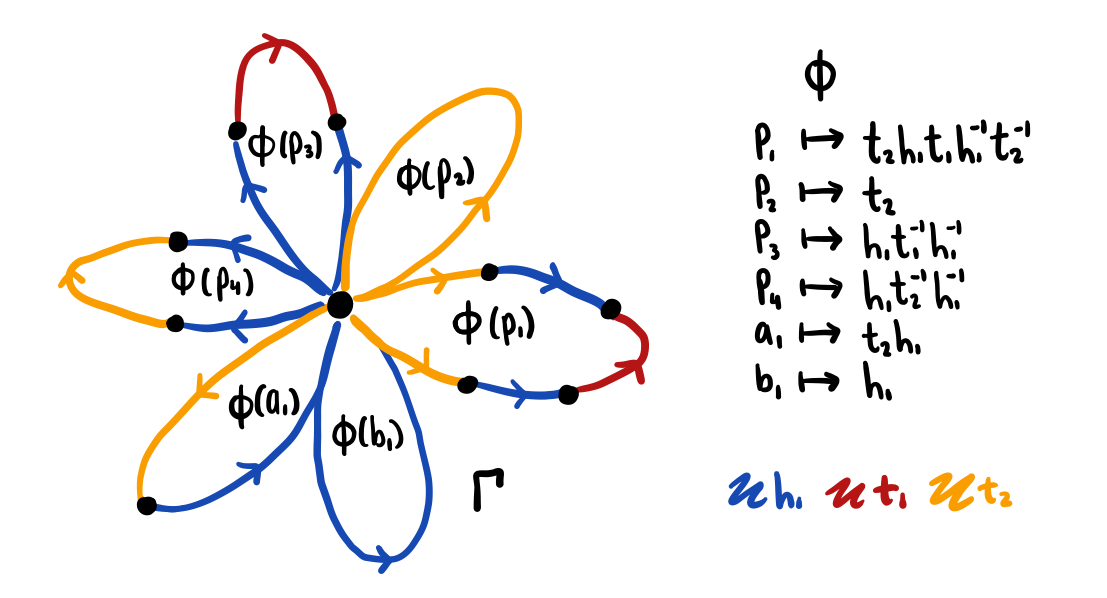}
    \caption{
        Running example. The graph $\Gamma$ is topologically a wedge of circles, where each circle is colored by the words $\phi(p_{1}),\ldots,\phi(p_{4}),\phi(a_1),\phi(b_1)$. Here colors are used to denote this coloring.
    \label{fig:ex5}}
\end{figure}

\begin{figure}
    \centering
    \includegraphics[width=.75\linewidth]{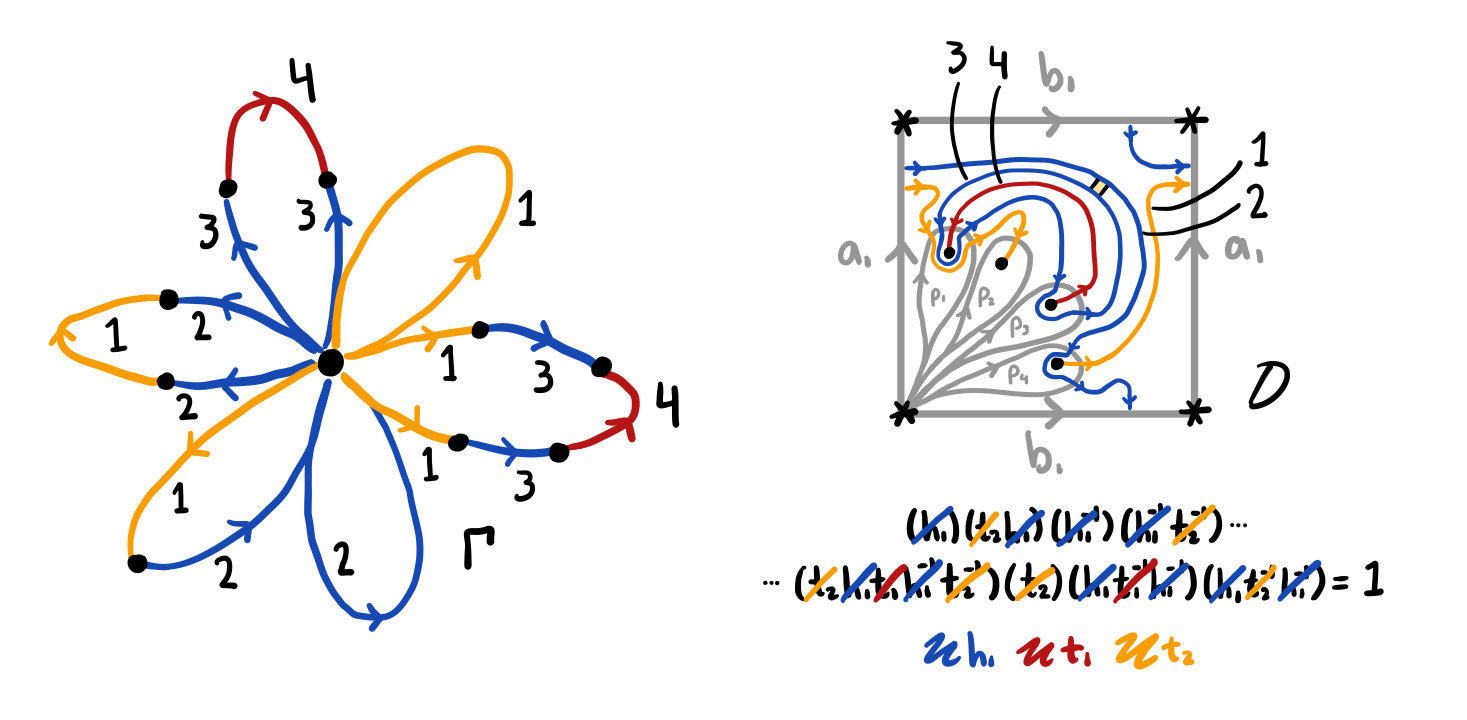}
    \caption{
        Running example. An additional label is added to each edge of $\Gamma$ which represents the corresponding curve/arc in $\mathcal{D}$.
        Here numbers are used to denote these labels.
    \label{fig:ex6}}
\end{figure}

Since the elements
\[
    \phi(p_{1}),\ldots,\phi(p_{2b}),\phi(a_1),\phi(b_1),\ldots,\phi(a_g),\phi(b_g)
\]
generate the free group $\langle t_{1}, \ldots, t_{b},h_{1}, \ldots, h_{g} \rangle$, by \autoref{lem:fold} there exists a sequence of foldings of the graph $\Gamma$
to the graph $R_n$ in \autoref{fig:fold3}, where $n=b+g$, the rank of the free group.
Choose a sequence of such foldings. Recall that foldings must occur between edges with the same \emph{coloring}, but not necessarily the same \emph{labels} (using the language of the previous paragraph).
Whenever two edges of $\Gamma$ with the same label are folded, the diagram $\mathcal{D}$
is left unchanged and the edge of $\Gamma$ resulting from the fold is given the same label as the edges it came from.
Whenever two edges of $\Gamma$ with different labels are folded,
an orientation-preserving band between the two corresponding curves/arcs is chosen,
disjoint from all of the other curves/arcs,
and the diagram $\mathcal{D}$ is modified by preforming a band sum
along this band as in \autoref{fig:band2}.
The edge of $\Gamma$ resulting from the fold is given a new labeling
that identifies the two banded together curves/arcs.
Any other occurrences of the involved labels are modified
as well.
See \autoref{fig:ex7} for a sequence of folds in our running example, and \autoref{fig:ex8} for the resulting band sum. See \autoref{ex:bandex} for a different example containing more complicated band sums.

\begin{figure}
    \centering
    \includegraphics[width=.5\linewidth]{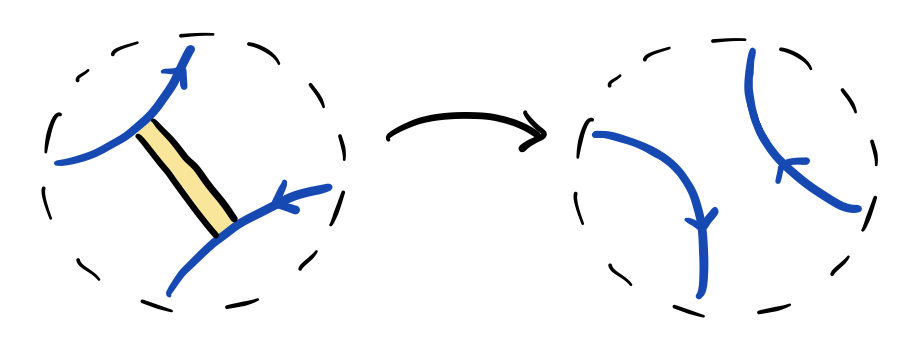}
    \caption{
        An orientation-preserving band sum between two curves/arcs in a neighborhood within the diagram $\mathcal{D}$. The curves/arcs are the same color but have different labels, that is, they are different curves/arcs in $\mathcal{D}$ that are colored by the same word.
    \label{fig:band2}}
\end{figure}

\begin{figure}
    \centering
    \includegraphics[width=1\linewidth]{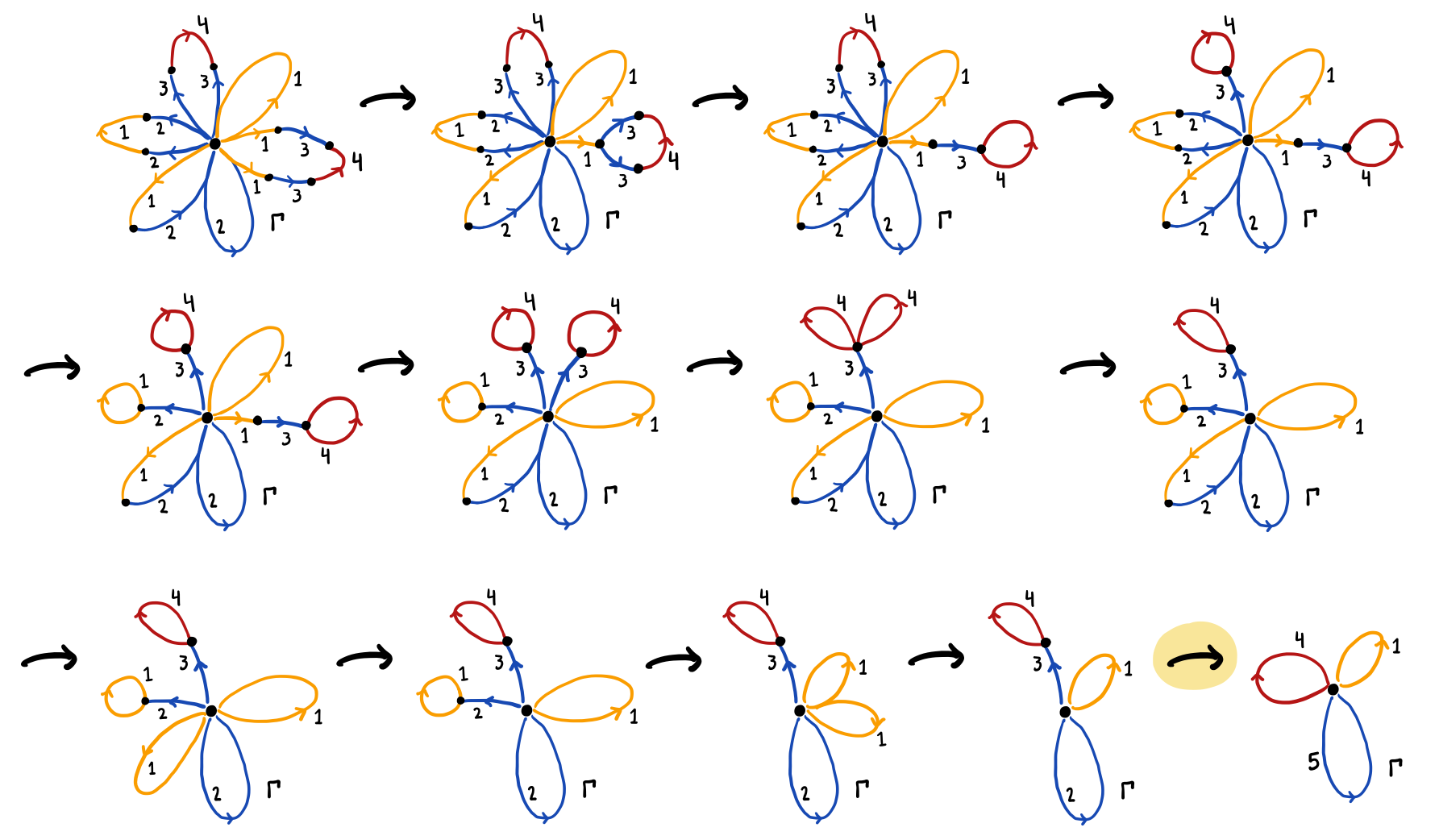}
    \caption{
        Running example. One possible sequence of Stallings folds. The diagram $\mathcal{D}$ remains unchanged throughout all of the folds except the last fold (highlighted), after which a band sum is performed as in \autoref{fig:ex8}.
    \label{fig:ex7}}
\end{figure}

\begin{figure}
    \centering
    \includegraphics[width=.9\linewidth]{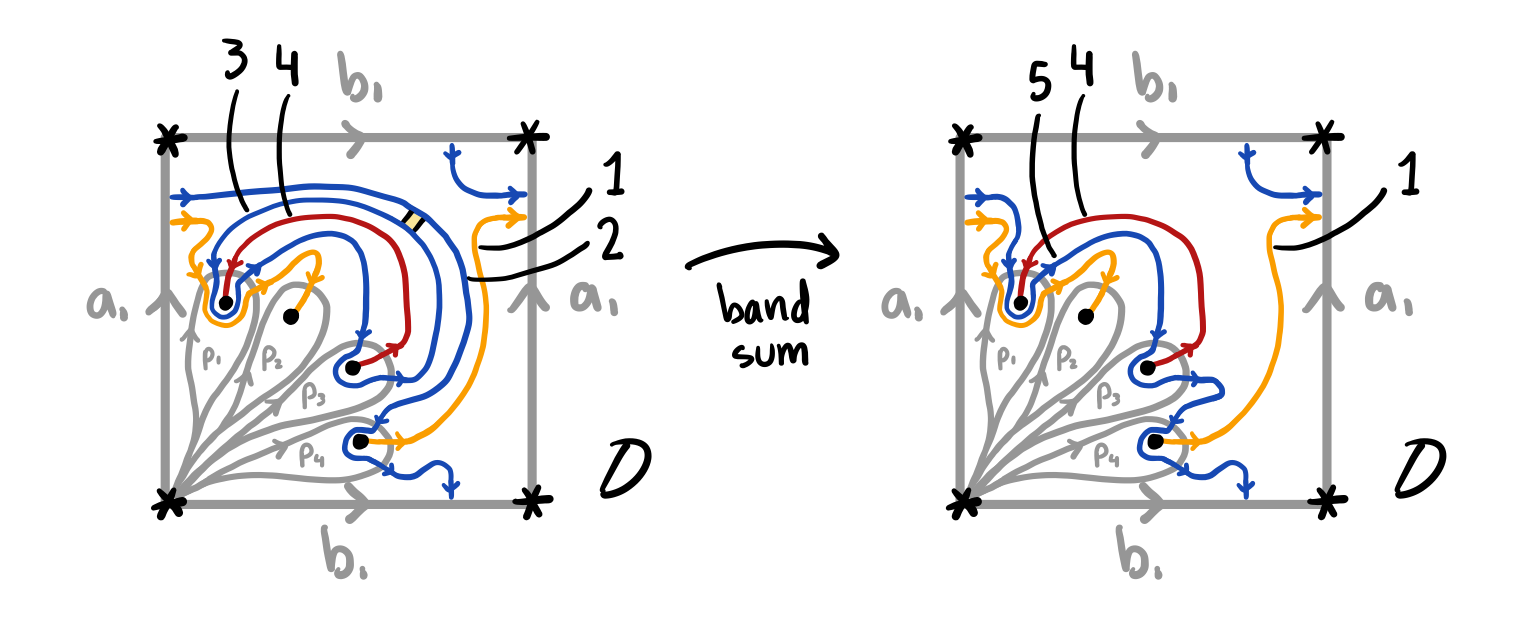}
    \caption{
        Running example. The curves 2 and 3 on the left are banded together to form the new curve 5 on the right. Now we are finished! In the notation of \autoref{ex:main}, the curve $5$ is $C_1$, the arc $4$ is $S_1$, and the arc $1$ is $S_2$.
    }
    \label{fig:ex8}
\end{figure}

Assuming for a moment that all the required bands do exist, in the final diagram there will be one curve/arc in the diagram for each edge of $R_n$, since the the graph $\Gamma$ folds down to $R_n$. It then follows that there will be exactly $b$ arcs and $g$ closed curves in the final diagram $\mathcal{D}$. The proceeding discussion then applies to see that the resulting closed curves form a cut system and the diagram $\mathcal{D}$ indeed does provide a topological realization of $\phi$. 

\vspace{2mm} 
\hypertarget{proof:third_stage}{\textbf{Third stage (Existence of bands):}}
We now tackle the problem of the existence of the bands, which will follow from the following claim.
Note that in the claim we are not assuming edges have the same color or label, even though our definition of folding requires edges to have the same color. Furthermore, we prove the existence of some ``extra'' orientation-reversing bands, which we do not need for the second stage of our proof, but we \textit{do} need as part of our inductive argument for the claim. Thus we prove that bands exist in a more general setting, which will imply that the specific bands we want in the previous part of the proof do indeed exist.

\begin{*claim}
    Given two incident edges $e_1$ and $e_2$ in the graph $\Gamma$
    at any state of the above procedure, there exists a band between the corresponding curves/arcs that label $e_1$ and $e_2$ which is disjoint from all other curves/arcs in the diagram $\mathcal{D}$. In particular:
    \begin{enumerate}
        \item If the incident edges are oriented such that they both go into their shared vertex, as in I of \autoref{fig:cases1}, then there exists an orientation-preserving band from the right of the curve/arc labeling $e_1$ to the right of the curve/arc labeling $e_2$.
        \item If the incident edges are oriented such that they both go out of their shared vertex, as in II of \autoref{fig:cases1}, then there exists an orientation-preserving band from the left of the curve/arc labeling $e_1$ to the left of the curve/arc labeling $e_2$.
        \item If the incident edges are oriented such that one goes into their shared vertex and one goes out, as in III of \autoref{fig:cases1}, then there exists an orientation-reversing band from the right of the curve/arc labeling $e_1$ to the left of the curve/arc labeling $e_2$.
    \end{enumerate}
\end{*claim}

\begin{figure}
    \centering
    \includegraphics[width=.7\linewidth]{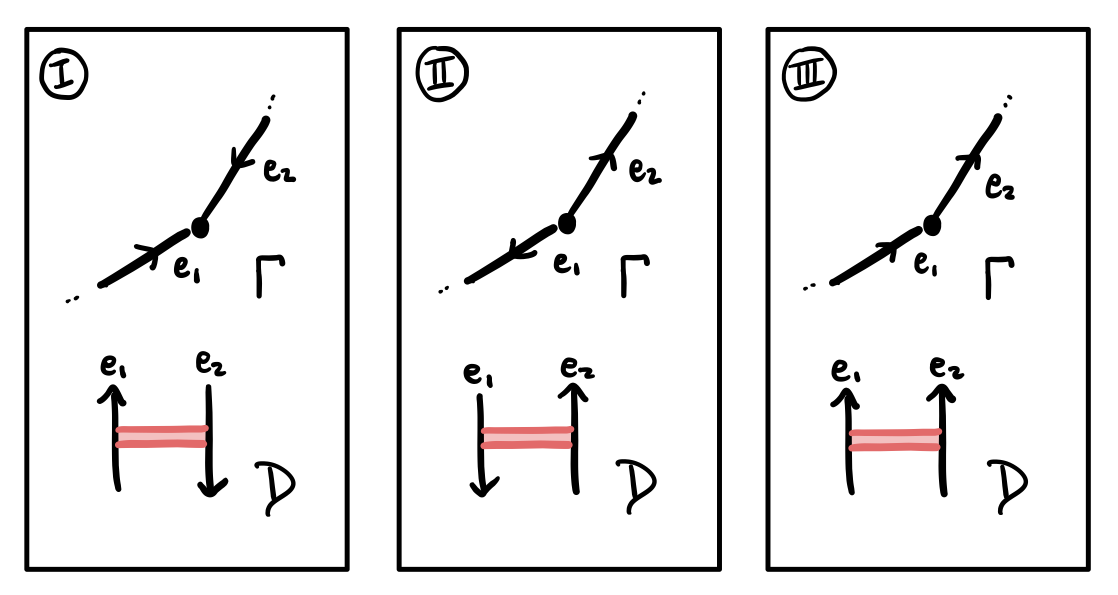}
    \caption{ 
        We consider three cases for how the edges $e_1$ and $e_2$ are oriented. Here $e_1$ and $e_2$ are the names of the edges in the graph $\Gamma$, not labels or colors, and we use the same notation to refer to the curves/arcs in $\mathcal{D}$ that correspond to the edges in $\Gamma$.
    \label{fig:cases1}}
\end{figure}

Here we are fixing some conventions on how the orientations of incident edges in $\Gamma$ correspond to orientations in $\mathcal{D}$, which can be done without loss of generality and in such a way that the cases I, II, and III are compatible with each other. Also note that in some cases, both an orientation-preserving and an orientation-reversing band might exist between the corresponding curves/arcs. The statement of the claim only contains the existence of those bands which are necessary for the proof.

We prove the claim by induction on the number of folds that have been performed in the algorithm, checking throughout that our three cases for orientations hold. We first check that the claim is valid for the preliminary diagram $\mathcal{D}$ before any folding has been performed. Our initial graph $\Gamma$ is topologically a wedge of circles, and incident edges can either be in the same circle or in different circles. 

\begin{figure}[p]
    \centering
    \includegraphics[width=.9\linewidth]{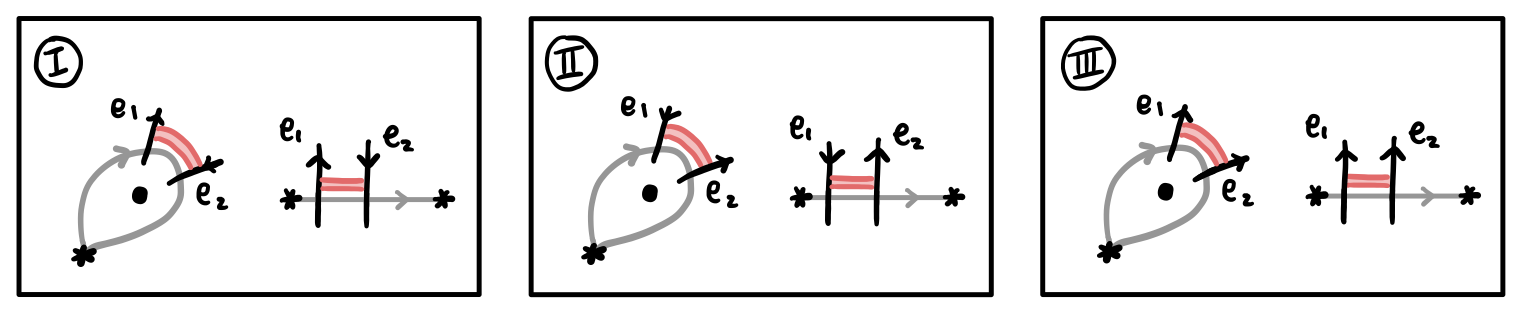}
    \caption{ 
        Some orientation checks for the base case of the claim, in which the edges $e_1$ and $e_2$ are in the same circle in the graph $\Gamma$. We abuse notation and use $e_1$ and $e_2$ to also refer to the oriented dashes in $\mathcal{D}$ that correspond to these edges in $\Gamma$. These figures show $e_1$ and $e_2$ in the diagram $\mathcal{D}$, where $\ast$ is the basepoint and the grey, unlabeled curves/arcs represent the generators $a_i$, $b_i$, and $p_i$, in the notation of the previous stages. Here we check whether a disjoint orientation-preserving band (in pink) between $e_1$ and $e_2$ can be found in cases (I) and (II), and whether a disjoint orientation-reversing band (in pink) between $e_1$ and $e_2$ can be found in case (III).
    \label{fig:cases2}}
\end{figure}

\begin{figure}[p]
    \centering
    \includegraphics[width=.9\linewidth]{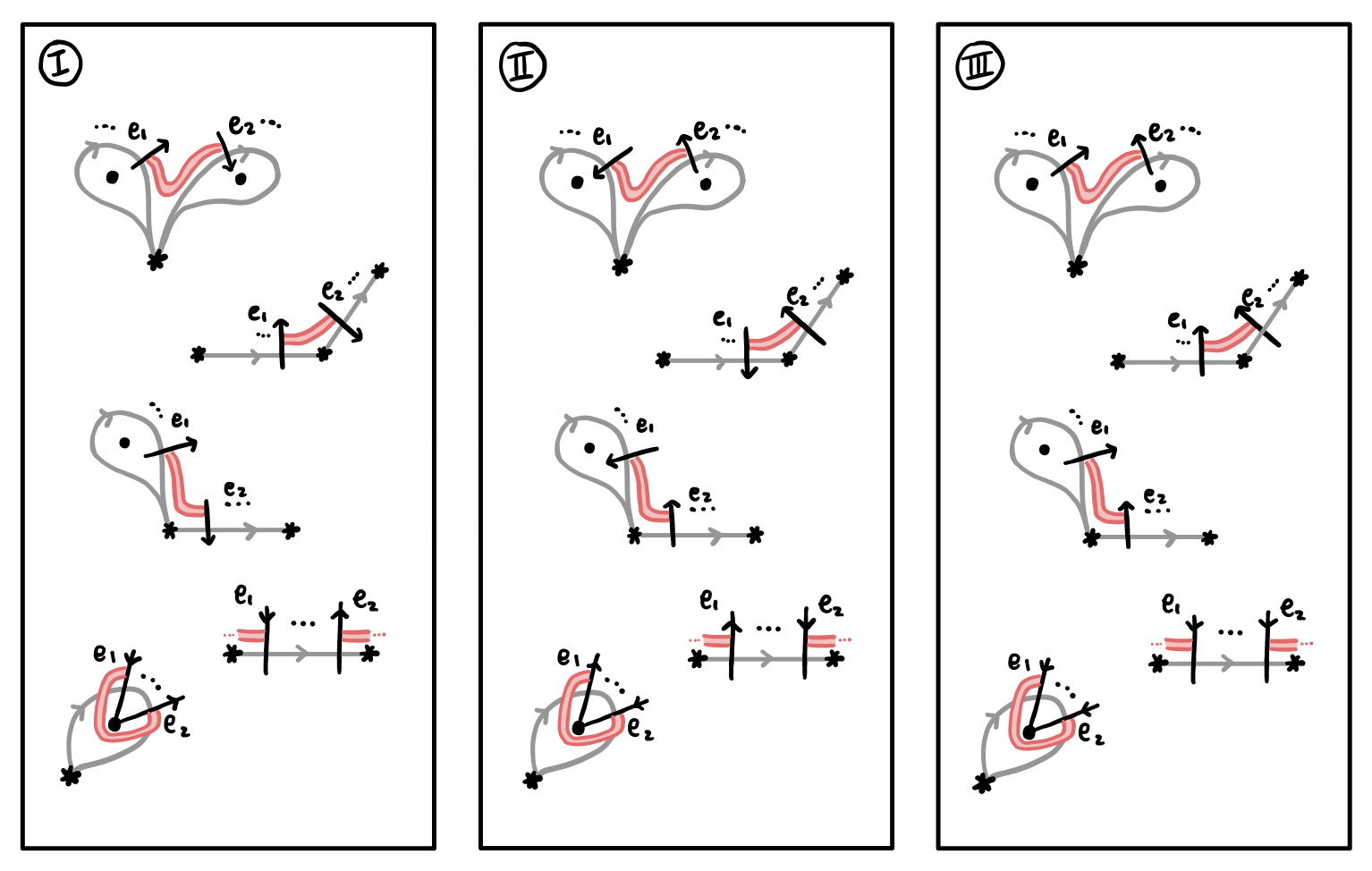}
    \caption{ 
        Some orientation checks for the base case of the claim, in which the edges $e_1$ and $e_2$ are in different circles in the graph $\Gamma$. We abuse notation and use $e_1$ and $e_2$ to also refer to the oriented dashes in $\mathcal{D}$ that correspond to these edges in $\Gamma$. These figures show $e_1$ and $e_2$ in the diagram $\mathcal{D}$, where $\ast$ is the basepoint and the grey, unlabeled curves/arcs represent the generators $a_i$, $b_i$, and $p_i$, in the notation of the previous stages. Here we check whether a disjoint orientation-preserving band (in pink) between $e_1$ and $e_2$ can be found in cases (I) and (II), and whether a disjoint orientation-reversing band (in pink) between $e_1$ and $e_2$ can be found in case (III).
    \label{fig:cases3}}
\end{figure}

If the edges $e_1$ and $e_2$ are in the same circle in $\Gamma$, then these correspond to oriented dashes in $\mathcal{D}$ that are right next to each other (except in one case mentioned below). Thus we can draw a band between them which is disjoint from the rest of the diagram. See \autoref{fig:cases2} for some of the orientation checks. If the edges $e_1$ and $e_2$ are in different circles in $\Gamma$, then they are both connected to the central vertex. They are therefore labeled by the ``outermost'' curves/arcs in $\mathcal{D}$ and are connected to the basepoint by an arc which is disjoint from the other curves/arcs. Thickening and joining these arcs then gives a band. See \autoref{fig:cases3} for some of the orientation checks. (The case where the edges are in the same circle, but both connected to the central vertex and not sharing a second vertex is included in \autoref{fig:cases3} rather than \autoref{fig:cases2}. Specifically, see the bottom-most two pictures in \autoref{fig:cases3}.)

For the inductive step, we verify that the validity of the claim is preserved after folding has occurred. Let $\mathcal{D}$ be the diagram before the fold, $\mathcal{D}'$ be the diagram after the fold, $f_1$ and $f_2$ be the edges to be folded, and $f$ be the new folded edge. We need to show that any curves/arcs that label edges that are newly incident after the fold still have a band between them. For ease of explanation, we will slightly abuse notation and use $e_1$, $e_2$, $f_1$, $f_2$, and $f$ to also refer to the curves/arcs in the diagram that are labeled by these edges.

We first handle the case where the two folded edges $f_1$ and $f_2$ have the same label. Note that in this case $\mathcal{D} = \mathcal{D}'$. Since $f_1$ and $f_2$ have the same label, they correspond to the same curve/arc in $\mathcal{D}$, and existing bands will suffice in all cases except those in which $e_1$ and $e_2$ are connected to the non-shared vertex of $f_1$ and $f_2$ respectively, and are not incident before the fold. In these cases, observe that a band from $e_1$ to $e_2$ can be created by taking the existing band from $e_1$ to $f_1$, following along $f_1=f_2=f$, and continuing along the existing band from $f_2$ to $e_2$. See \autoref{fig:cases4} for the orientation checks.

\begin{figure}[h]
    \centering
    \includegraphics[width=1\linewidth]{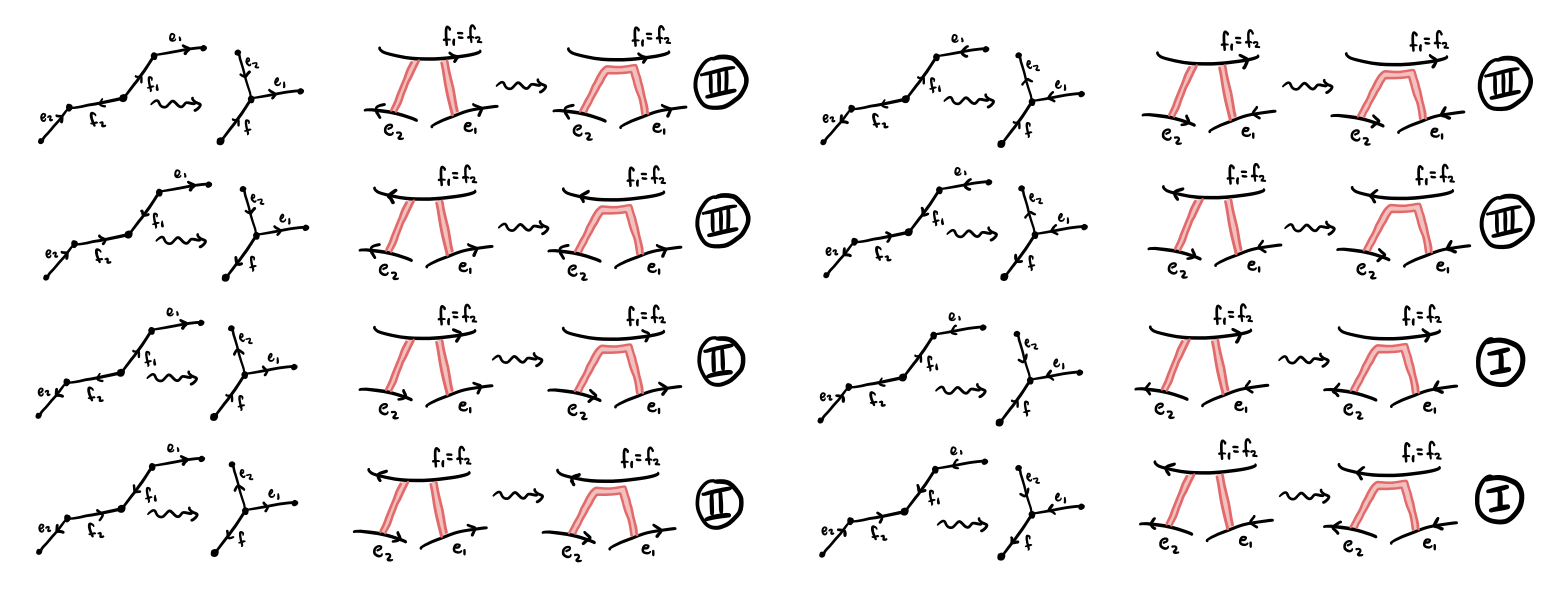}
    \caption{ 
        Orientation checks for the inductive step of the claim when the edges $f_1$ and $f_2$ have the same label. We abuse notation and use $e_1$, $e_2$, $f_1$, $f_2$, and $f$ to also refer to the curves/arcs in the diagram that correspond to these edges in $\Gamma$. These figures show the possible orientations for $e_1$, $e_2$, $f_1$, $f_2$, and $f$ relative to each other in $\Gamma$ (on the left), and how in the corresponding diagrams (on the right), a disjoint orientation-preserving band between $e_1$ and $e_2$ can be found inductively when they are oriented as in cases (I) and (II), and a disjoint orientation-reversing band between $e_1$ and $e_2$ can be found inductively when they are oriented as in case (III). These pictures suffice for all cases in which $e_1$ and $e_2$ are connected to the non-shared vertex of $f_1$ and $f_2$ respectively, and are not incident before the fold. 
    \label{fig:cases4}}
\end{figure}

Finally, assume that the two edges $f_1$ and $f_2$ have different labels. In this case, the diagram $\mathcal{D}'$ differs from the diagram $\mathcal{D}$ by an orientation-preserving band sum between $f_1$ and $f_2$, which merges these curves/arcs into the same component $f$. Therefore, as in the previous case, existing bands will suffice in all cases except those in which $e_1$ and $e_2$ are connected to the non-shared vertex of $f_1$ and $f_2$ respectively, and are not incident before the fold. In these cases, observe that a band from $e_1$ to $e_2$ can be created by taking the existing band from $e_1$ to $f_1$, following through the ``tunnel'' created by the band sum, and continuing along the existing band from $f_2$ to $e_2$. See \autoref{fig:cases5} for the orientation checks. 
\end{proof}

\begin{figure}[h]
    \centering
    \includegraphics[width=1\linewidth]{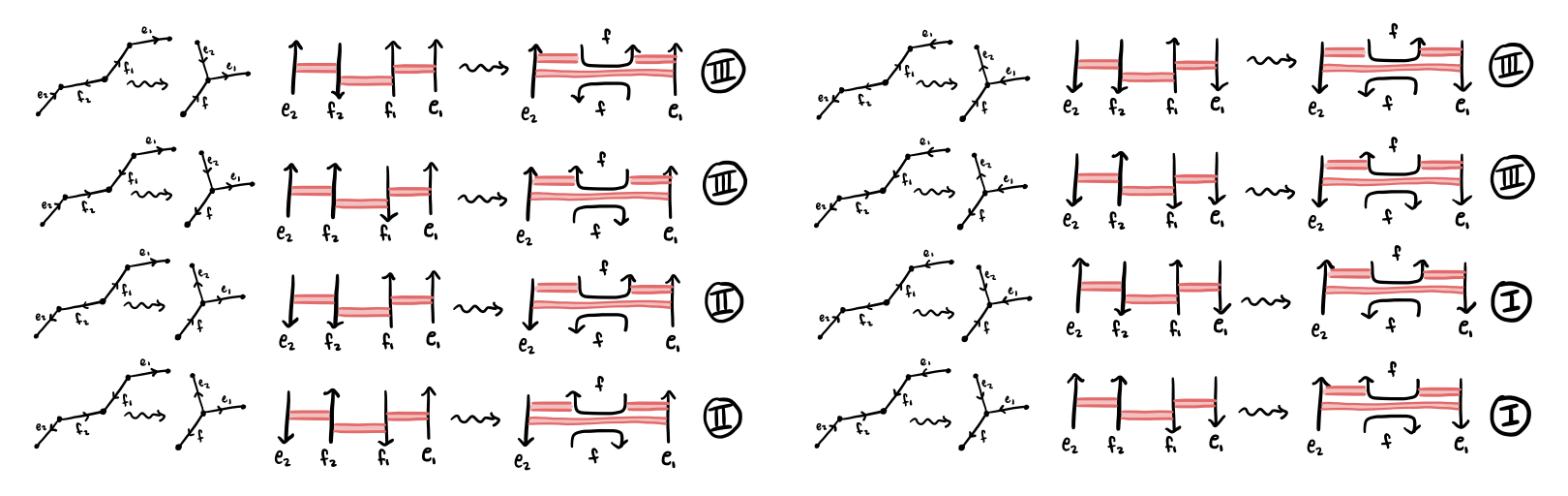}
    \caption{ 
        Orientation checks for the inductive step of the claim when the edges $f_1$ and $f_2$ have different labels. We abuse notation and use $e_1$, $e_2$, $f_1$, $f_2$, and $f$ to also refer to the curves/arcs in the diagram that correspond to these edges in $\Gamma$. These figures show the possible orientations for $e_1$, $e_2$, $f_1$, $f_2$, and $f$ relative to each other in $\Gamma$ (on the left), and how in the corresponding diagrams (on the right), a disjoint orientation-preserving band between $e_1$ and $e_2$ can be found inductively when they are oriented as in cases (I) and (II), and a disjoint orientation-reversing band between $e_1$ and $e_2$ can be found inductively when they are oriented as in case (III). These pictures suffice for all cases in which $e_1$ and $e_2$ are connected to the non-shared vertex of $f_1$ and $f_2$ respectively, and are not incident before the fold.
    \label{fig:cases5}}
\end{figure}

\begin{example}[More complicated band sums] \label{ex:bandex}
In \autoref{fig:bandex} we present an example containing more complicated band sums (compared to our running example). Here our surface is $\sphere{2}$. In the top box we start with a preliminary diagram $\mathcal{D}$ coming from a given $\phi$ (and a choice of cancellation), and from this we produce a graph $\Gamma$. The middle box shows a sequence of Stallings folds, which results in three band sums (corresponding to the highlighted folds). The bottom box shows the result of the band sums in the diagram $\mathcal{D}$. Note that in this example there is a necessary order for the band sums; $1$ and $3$ cannot be banded together until $2$ and $5$, and then $6$ and $4$, are banded together.

\begin{figure}[p]
    \centering
    \includegraphics[width=.9\linewidth]{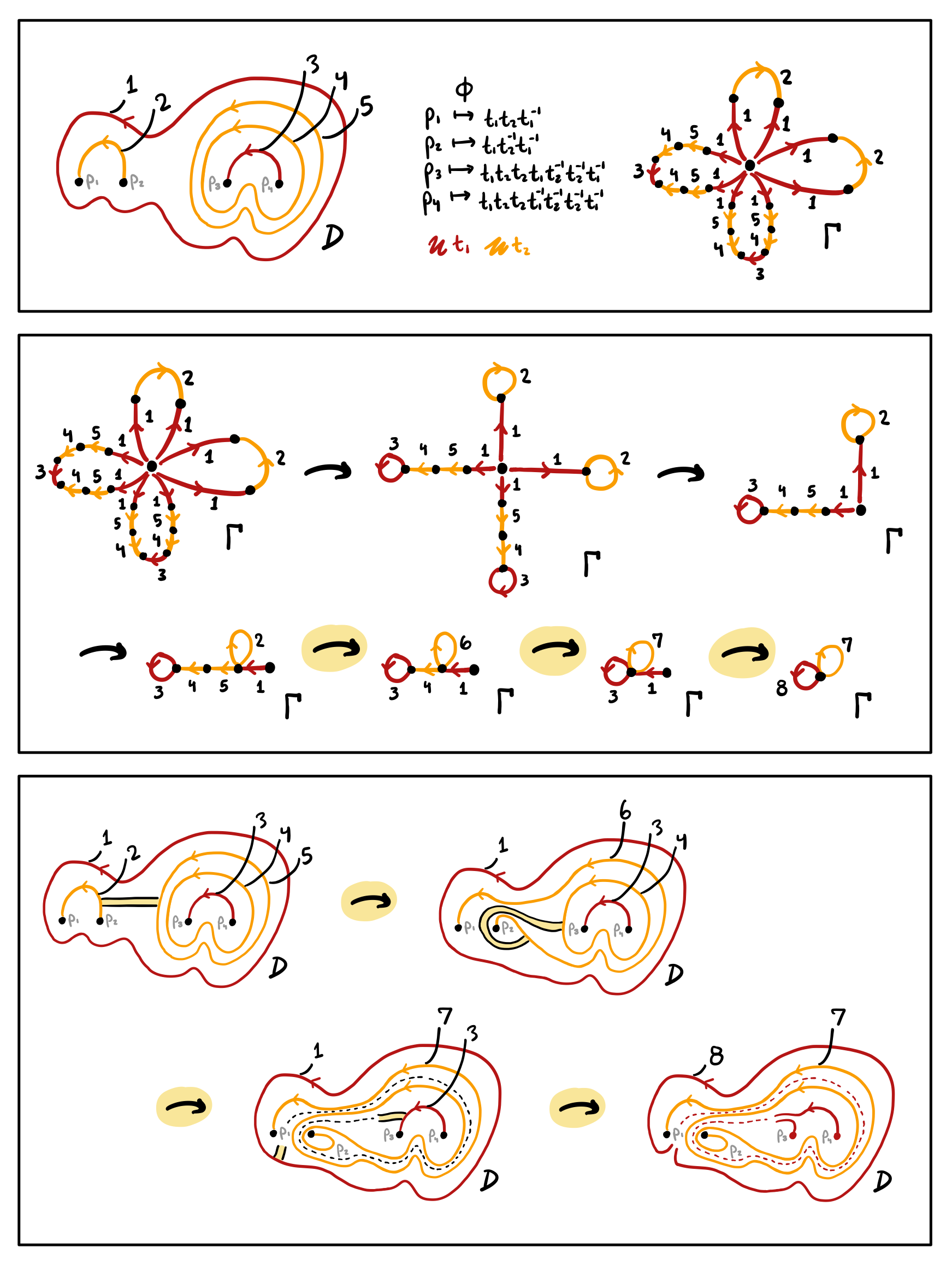}
    \caption{
        An example containing more complicated band sums. Note that in this example there is a necessary order for the band sums; $1$ and $3$ cannot be banded together until $2$ and $5$, and then $6$ and $4$, are banded together.
    \label{fig:bandex}}
\end{figure}
\end{example}

\section{Closed 3-manifolds and bridge split links}
\label{sec:3-manifolds}

In this section and the following, we translate fundamental topological theorems into the algebraic setup we described in \autoref{sec:setup}, and obtain correspondences between topology and algebra.
In each subsection we take as input a topological theorem and show how this translates to algebra.
Because we pass through diagrams in between topology and algebra and multiple notions of equivalence are involved, the proofs are rather technical.
We include full details in \autoref{sec:closed_3-manifolds} and omit some details further on, as each subsection builds from the previous and the proofs follow similarly.

As a warm-up, we begin in \autoref{sec:closed_3-manifolds}
with the case of closed 3-manifolds where our topological input theorem is the Reidemeister-Singer theorem.
Then in \autoref{sec:links_3-manifolds} we show how link theory in 3-manifolds
can be translated into the algebra of bounding homomorphisms
up to stabilization, starting from the observation that
a pair of bounding homomorphisms determines a
link in bridge position in a Heegaard split 3-manifold. We then state a correspondence theorem in this setting.

In \autoref{sec:closed_4-manifolds} we recall the
4-dimensional story of group trisections of closed 4-manifolds.
Then in \autoref{sec:links_4-manifolds}
we consider the case of surfaces inside 4-manifolds, where a triple of bounding
homomorphism with a pairwise freeness condition
determines a bridge trisected surface
in a trisected 4-manifold.

\subsection{Closed 3-manifolds}
\label{sec:closed_3-manifolds}

This section is heavily inspired
by Jaco's announcement \cite{jaco1970stable} of a result similar to \autoref{thm:Alg_3_to_Man_3}. Our topological input theorem here is the Reidemeister-Singer theorem.

\begin{ttheorem}[\cite{reidemeister1933topologie, singer1933heegaard}]
    Any two Heegaard splittings of a fixed $3$-manifold
    become isotopic after some number of
    stabilizations.
\end{ttheorem}

Let $\texttt{Man}^3$ denote the set of all
closed, connected, oriented 3-manifolds considered up to
orientation-preserving diffeomorphism
(or equivalently homeomorphism).
Let $\texttt{Alg}^3$ denote the set of pairs of homomorphisms
$(\phi_1, \phi_2)$ where $\phi_i \colon \pi_1(\Sigma_g, \ast) \twoheadrightarrow F_g $ for $i=1,2$
are surjections. We will call such pairs $(\phi_1, \phi_2)$ \emph{splitting homomorphisms} \cite{stallings1966nottoprove, jaco1969splitting}.
Given a single such surjection $\phi$, which is a bounding homomorphism for the special case where $b=0$,
using \autoref{thm:main}
we obtain a handlebody which we will denote by $H(\phi)$
such that the following diagram commutes
for an isomorphism $\psi$ as in \autoref{lem:main}.
\begin{equation*}
    \begin{tikzcd}[row sep=tiny]
        & \pi_1(H(\phi), \ast)
        \arrow{dd}{\psi}[swap]{\cong} \\
    \pi_1(\Sigma_{g}, \ast)
    \arrow[ur, "\iota", twoheadrightarrow] \arrow[dr, swap, "\phi", twoheadrightarrow] & \\
        & F_{g}
    \end{tikzcd}
\end{equation*}
By \autoref{lem:uniqueness}, $H(\phi)$ is the unique handlebody
bounding $\Sigma_g$ with the property that there exists
a vertical isomorphism $\psi$ in this diagram making it commute.  

Therefore, given $(\phi_1,\phi_2) \in \texttt{Alg}^3$ we can form two handlebodies $H(\phi_1), H(\phi_2)$ with boundary $\Sigma_g$, and thus we obtain a compact 3-manifold $M(\phi_1, \phi_2) = H(\phi_1) \cup_{\Sigma_g} - H(\phi_2)$, which is given the orientation that naturally results from gluing
the orientations of $H(\phi_1)$ and $-H(\phi_2)$. We thus have a map
\begin{align*}
    M \colon \texttt{Alg}^3 &\to \texttt{Man}^3 \\
    (\phi_1, \phi_2) &\mapsto M(\phi_1, \phi_2).
\end{align*}

We will define three relations on $\texttt{Alg}^3$, denoted $\sim_h$, $\sim_m$, and $\sim_s$, so that this map $M$ descends to the quotient of $\texttt{Alg}^3$
by these relations. Both of the relations $\sim_h$ and $\sim_m$ are actually equivalence relations on the set $\Alg^3$, while $\sim_s$ is not.
We will abuse notation and also denote by $\sim_s$ the equivalence relation on $\Alg^3$
generated by the relation $\sim_s$ (that is, the smallest equivalence relation containing $\sim_s$).
The $h$ here stands for ``handleslide,'' the $m$ for ``mapping class,'' and the $s$ for ``stabilization.'' The proof of \autoref{thm:Alg_3_1_to_Man_3_1} motivates this choice of notation. 

We say $(\phi_1, \phi_2) \sim_h (\phi_1', \phi_2')$ if for $i = 1,2$ there exist isomorphisms $h_i \colon F_g \to F_g$ such that the following diagram commutes. 
\begin{equation*}
    \begin{tikzcd}[row sep=tiny]
        & F_g \arrow{dd}{h_{i}}[swap]{\cong}\\
    \pi_1(\Sigma_{g}, \ast)
    \arrow[ur, "\phi_i", twoheadrightarrow] \arrow[dr, swap, "\phi_i'", twoheadrightarrow] & \\
        & F_{g}
    \end{tikzcd}
\end{equation*}

We call an automorphism $m \colon \pi_1(\Sigma_{g}, \ast) \to \pi_1(\Sigma_{g}, \ast)$ \emph{orientation-preserving} if the induced automorphism $H_2(\pi_1(\Sigma_{g}, \ast); \mathbb{Z}) \to H_2(\pi_1(\Sigma_{g}, \ast); \mathbb{Z})$ is the identity.  We write $(\phi_1, \phi_2) \sim_m (\phi_1', \phi_2')$ if there exists an orientation-preserving isomorphism $m \colon \pi_1(\Sigma_{g}, \ast) \to \pi_1(\Sigma_{g}, \ast)$ so that for $i = 1,2$ the following diagram commutes. 
\begin{equation*}
    \begin{tikzcd}[row sep=tiny]
        \pi_1(\Sigma_{g}, \ast) \arrow{dd}{m}[swap]{\cong} 
        \arrow[dr, "\phi_i", twoheadrightarrow] &  \\
                & F_{g} \\
        \pi_1(\Sigma_{g}, \ast) \arrow[ur, swap, "\phi_i'", twoheadrightarrow] &
    \end{tikzcd}
\end{equation*}

Next we define $\sim_s$. We note that $\phi_i'$ will be a map from $\pi_1(\Sigma_g,\ast) \twoheadrightarrow F_g$ while $\phi_i$ will be a map from $\pi_1(\Sigma_{g+1},\ast) \twoheadrightarrow F_{g+1}$. Let $a_{i}$, $b_{i}$ be the generators of $\pi_1(\Sigma_g,\ast)$ (and, abusing notation, $\pi_1(\Sigma_{g+1},\ast)$), and $h_i$ be the generators of $F_g$ (and, abusing notation, $F_{g+1}$). We say $(\phi_1, \phi_2) \sim_s (\phi_1', \phi_2')$ if $\phi_i(a_j) = \phi_i'(a_j)$ and $\phi_i(b_j) = \phi_i'(b_j)$ for $i = 1,2$ and $j= 1,\ldots,g$ (where we are identifying $F_g$ naturally as a subset of $F_{g+1}$), and the rest of the generators are mapped as follows. 
 \begin{align*}
     \phi_1(a_{g+1}) &= h_{g+1}\\ 
     \phi_1(b_{g+1}) &= 1 \\
     \phi_2(a_{g+1}) &= 1 \\ 
     \phi_2(b_{g+1}) &= h_{g+1}
 \end{align*}

Let $\sim$ denote the equivalence relation on $\Alg^3$
generated by $\sim_h$, $\sim_m$, and $\sim_s$. Now we proceed to the main result of this section. A result that is similar in spirit was announced in \cite{jaco1970stable}.

\begin{theorem}
    \label{thm:Alg_3_to_Man_3}
    The map $M \colon \Alg^3 \to \Man^3$ descends to $\Alg^3/ \sim$ and the resulting map is a bijection. 
\end{theorem}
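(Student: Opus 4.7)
The plan is to establish three things: that the map $M$ is well-defined on the quotient $\Alg^3/\sim$, that it is surjective, and that it is injective. The topological input is essentially the Reidemeister-Singer theorem, and the algebraic moves $\sim_h$, $\sim_m$, $\sim_s$ have been cooked up to mirror the three possible changes of Heegaard data: reparametrizing each handlebody's free-group identification, applying a surface diffeomorphism, and stabilizing.

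For well-definedness I would check each generating relation in turn, using \autoref{lem:uniqueness} as the rigidifying tool. Under $\sim_h$, the post-composition by $h_i$ only replaces the identifying isomorphism $\psi$ in \autoref{lem:main} by $h_i \circ \psi$, so $H(h_i \circ \phi_i) = H(\phi_i)$ as a handlebody with boundary $\Sigma_g$, and hence $M(\phi_1,\phi_2) = M(\phi_1', \phi_2')$ on the nose. Under $\sim_m$, the automorphism $m$ of $\pi_1(\Sigma_g, \ast)$, being orientation-preserving on $H_2$, is realized by an orientation-preserving self-diffeomorphism of $\Sigma_g$ (Dehn--Nielsen--Baer); this diffeomorphism carries $H(\phi_i')$ to $H(\phi_i)$ by \autoref{lem:uniqueness}, and the induced self-diffeomorphism of the glued $3$-manifold is orientation-preserving. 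Under $\sim_s$, the prescribed images of $a_{g+1}, b_{g+1}$ show that a meridian disk of $H(\phi_1)$ meets a meridian disk of $H(\phi_2)$ in a single point on the new handle, so gluing contributes a standard $S^3$ summand and the diffeomorphism type of $M$ is unchanged.

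Surjectivity is immediate from the existence of Heegaard splittings: given $N \in \Man^3$, pick any splitting $N = H_1 \cup_{\Sigma_g} -H_2$ with a basepoint on $\Sigma_g$, choose identifications $\pi_1(H_i,\ast) \cong F_g$, and let $\phi_i$ be the inclusion-induced map; by \autoref{lem:main} and \autoref{lem:uniqueness} one has $M(\phi_1,\phi_2) \cong N$.

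Injectivity is where Reidemeister--Singer enters, and this is the main obstacle, because one must carefully disentangle two sources of ambiguity (handlebody-internal handleslides, i.e.\ changes of cut system, versus ambient surface diffeomorphisms) from the genuinely new phenomenon of stabilization. Suppose $M(\phi_1, \phi_2)$ and $M(\phi_1', \phi_2')$ are orientation-preservingly diffeomorphic; pushing the Heegaard splitting $(H(\phi_1'), H(\phi_2'))$ across the diffeomorphism places two Heegaard splittings inside the \emph{same} $3$-manifold. By Reidemeister--Singer, after $k$ stabilizations of each they are ambient-isotopic. I would prove separately: (i) if two splitting homomorphisms yield Heegaard surfaces related by an ambient isotopy in a fixed $3$-manifold, then they are $\sim_h \!\circ\! \sim_m$-equivalent, by taking the restriction of the isotopy to $\Sigma_g$ to supply the element of $\sim_m$, then choosing $\sim_h$ to absorb the difference between two identifications of $\pi_1(H_i, \ast)$ with $F_g$; (ii) a single geometric stabilization corresponds (after possibly composing with a handleslide relation $\sim_h$ and a mapping-class relation $\sim_m$ to normalize the position of the stabilizing handle) to a single application of $\sim_s$. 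Combining (i) and (ii) with Reidemeister--Singer yields a chain of $\sim$-equivalences from $(\phi_1, \phi_2)$ to $(\phi_1', \phi_2')$. The delicate technical step is (ii): one must show that any stabilization can, up to $\sim_h$ and $\sim_m$, be brought into the standard algebraic form prescribed in the definition of $\sim_s$, which reduces to the fact that there is a unique genus-one Heegaard splitting of $S^3$ up to the relevant moves.
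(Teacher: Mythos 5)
Your proposal is correct and follows essentially the same route as the paper: well-definedness relation by relation via \autoref{lem:uniqueness} and Dehn--Nielsen--Baer, surjectivity from existence of Heegaard splittings, and injectivity from Reidemeister--Singer together with the dictionary between $\sim_h$, $\sim_m$, $\sim_s$ and handleslides, mapping classes, and stabilizations. The only organizational difference is that the paper factors $M$ explicitly through an intermediate set of Heegaard diagrams and descends through the quotients one at a time (with explicit sections), which in particular makes precise the point you correctly flag as delicate, namely that the stabilization move is only well-defined after first quotienting by handleslides.
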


\begin{proof}
    We consider an intermediate set $\texttt{Diag}^3$ whose elements are \emph{Heegaard diagrams}, that is, tuples $(\Sigma_g, \alpha, \beta)$
    where $\alpha$ and $\beta$ are cut systems on $\Sigma_g$ (which are only considered up to isotopy). We will refer to these simply as \emph{diagrams}.
    Then the map $M$ factors as shown below.
    \begin{equation*}
    \begin{tikzcd}
        \Alg^3
        \arrow[rr, "M"] \arrow[dr, "D"] 
        & & \Man^3  \\
        & \texttt{Diag}^3 \arrow[ur, "R"] &
    \end{tikzcd}
    \end{equation*}
    The map $R \colon \Diag^3 \to \Man^3$
    is the topological realization
    of a diagram
    $(\Sigma_g, \alpha, \beta)$, where we cross $\Sigma_g$ with an interval,
    glue disks on the respective sides to $\alpha$ and $\beta$,
    and then glue $3$-balls to the resulting sphere boundary components. The map
    $D \colon \texttt{Alg}^3 \to \texttt{Diag}^3$ is the construction of $M$
    using \autoref{thm:main}, but where we stop at just a diagram (rather than realizing the manifold) with $\alpha$ corresponding to $\phi_1$ and $\beta$ corresponding to $\phi_2$.
    We use the notation $D(\phi_1)$ and $D(\phi_2)$ to denote these cut systems so that
    $D \colon (\phi_1, \phi_2) \mapsto (\Sigma_g, D(\phi_1), D(\phi_2))$.
    
    The following commutative diagram is a guide to the logic of the proof. The goal is to define a bijection $(\Alg^3 / \sim_{h}, \sim_{m}, \sim_{s}) \to \Man^3$, so we must show that this map, which passes through an intermediate set of diagrams, is well-defined, injective, and surjective. We do this by descending by quotients on the algebraic and diagrammatic sides, and checking each time that the relevant map factors through and a bijection between the quotients is achieved.

    \begin{equation*}
    \begin{tikzcd}
        \Alg^3
        \arrow[rr, "M"] \arrow[dr, "D"] \arrow[dd]
        & & \Man^3  \\
        &
        \Diag^3 \arrow[ur, "R"]
        \arrow[d, "p_1"]
        & \\
        \Alg^3 / \sim_{h} \arrow[d] \arrow[r, "D_1"] &
        \Diag^3 / \sim_{h} \arrow[d, "p_2"]  \arrow[ruu, bend right] & \\
        \Alg^3 / \sim_{h}, \sim_{m} \arrow[d] \arrow[r, "D_2"] &
        \Diag^3 / \sim_{h}, \sim_{m} \arrow [d, "p_3"] \arrow[ruuu, bend right] & \\
        \Alg^3 / \sim_{h}, \sim_{m}, \sim_{s} \arrow[r, "D_3"] &
        (\Diag^3 / \sim_{h}, \sim_{m}) / \sim_{s} \arrow[ruuuu, bend right] & \\
    \end{tikzcd}
    \end{equation*}
    
    We abuse notation and use the symbols $\sim_h$, $\sim_m$, $\sim_s$ to denote equivalence relations on both the algebraic and diagrammatic sides. Below in \autoref{tab:notation} we summarize the notation used throughout the proof, with precise definitions for the relations on the diagrammatic side following.
    
    \begin{table}[h!]
        \centering
        \begin{tabular}{ p{2cm} p{11cm} }
        $\sim_h$ & an equivalence relation on $\Alg^3$ (as defined above) \\
        $\sim_m$ & an equivalence relation on $\Alg^3$ (as defined above) \\
        $\sim_s$ & an equivalence relation on $\Alg^3$ (as defined above) \\
        $(\phi_1, \phi_2)$ & an element of $\Alg^3$ \\
        $[\phi_1,\phi_2]$ & an equivalence class in  $\Alg^3/ \sim_h$  \\ 
        $\llbracket \phi_1, \phi_2 \rrbracket$ & an equivalence class in  $\Alg^3 / \sim_{h}, \sim_{m}$  \\ 
          & \\
        $\sim_h$ & an equivalence relation on $\Diag^3$ (generated by handleslides) \\
        $\sim_m$ & an equivalence relation on $\Diag^3$ (generated by mapping classes) \\
        $\sim_s$ & an equivalence relation on $\Diag^3 / \sim_{h}$ (generated by stabilizations) \\
        $(\Sigma_g, \alpha, \beta)$ & an element of $\Diag^3$ \\
        $[\Sigma_g, \alpha, \beta]$ & an equivalence class in  $\Diag^3/ \sim_h$  \\
        $\llbracket \Sigma_g, \alpha, \beta \rrbracket$ & an equivalence class in  $\Diag^3 / \sim_{h}, \sim_{m}$ \\ & \\
        \end{tabular}
        \caption{
        A summary of the notation used throughout the proof of \autoref{thm:Alg_3_to_Man_3}.
        \label{tab:notation}
        }
        
    \end{table}
    Given two diagrams $(\Sigma_g, \alpha, \beta)$ and $(\Sigma_g, \alpha', \beta')$, we write $(\Sigma_g, \alpha, \beta) \sim_h (\Sigma_g, \alpha', \beta')$ if there is a sequence of handleslides from the curves $\alpha$ to $\alpha'$ and similarly from $\beta$ to $\beta'$.
    We write $(\Sigma_g, \alpha, \beta) \sim_m (\Sigma_g, \alpha', \beta')$ if there exists a single mapping class $\Sigma_g \to \Sigma_g$ taking $\alpha$ to $\alpha'$ and $\beta$ to $\beta'$ simultaneously. Let $\llbracket \Sigma_g, \alpha, \beta \rrbracket$ denote an equivalence class of a diagram $(\Sigma_g, \alpha, \beta)$ under the equivalence relation generated by $\sim_h$ and $\sim_m$. 
   
    Recall that stabilizing a Heegaard diagram entails performing a connect sum with the standard genus $1$ diagram of $S^3$. In order to connect sum in a controlled manner, recall that we have fixed a standard model for the closed genus $g$ surface, and we additionally fix a disk on this model where the connect sum will be performed. See \autoref{fig:model}. Because we will mod out by handleslides and mapping classes first, we can assume our Heegaard diagram looks like the standard model. We write that $\llbracket \Sigma_{g+1}, \alpha, \beta \rrbracket \sim_s \llbracket \Sigma_g, \alpha', \beta' \rrbracket$ if we obtain $\alpha$ and $\beta$ on $\Sigma_{g+1}$ from $\alpha'$ and $\beta'$ on $\Sigma_g$ by:
    \begin{enumerate}
        \item choosing an isotopy of $\alpha'$ and $\beta'$ such that they do not intersect the connect sum disk, and
        \item modifying $\Sigma_g$ to be $\Sigma_{g+1}$ (using the fixed disk for the connect sum) and adding the two new curves in the standard genus $1$ diagram of $S^3$ to $\alpha'$ and $\beta'$.
    \end{enumerate} 
    (Note that this description incorporates both stabilization and destabilization, depending on which diagram is seen as the original and which is the modified one.)
    This operation is not well defined in $\Diag^3$ because of the choice of isotopy; for instance, see \autoref{fig:slide1}. However once we quotient by $\sim_h$ this \textit{is} well defined, as we are able to use handleslides to ``move'' the curve over the attached handle. See \autoref{fig:slide2}. Thus we write  $(\Diag^3 / \sim_{h}, \sim_{m})/ \sim_{s}$ rather than $\Diag^3 / \sim_{h}, \sim_{m}, \sim_{s}$ because unique stabilizations only occur after modding out by handleslides, and additionally we wish to assume our Heegaard diagram looks like our standard model equipped with our fixed disk.
    
    \begin{figure}
    \centering
    \includegraphics[width=.35\linewidth]{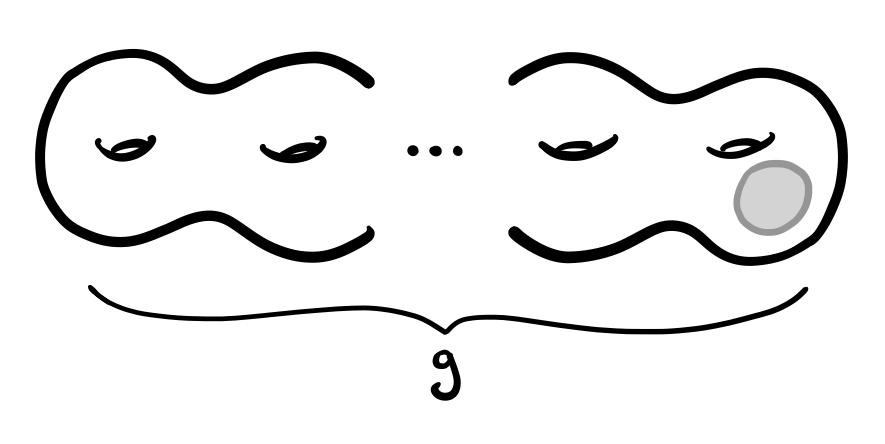}
    \caption{
    Our standard model for the closed genus $g$ surface, with a fixed disk for stabilization indicated.
    \label{fig:model}}
    \end{figure}
    
    \begin{figure}
    \centering
    \includegraphics[width=.45\linewidth]{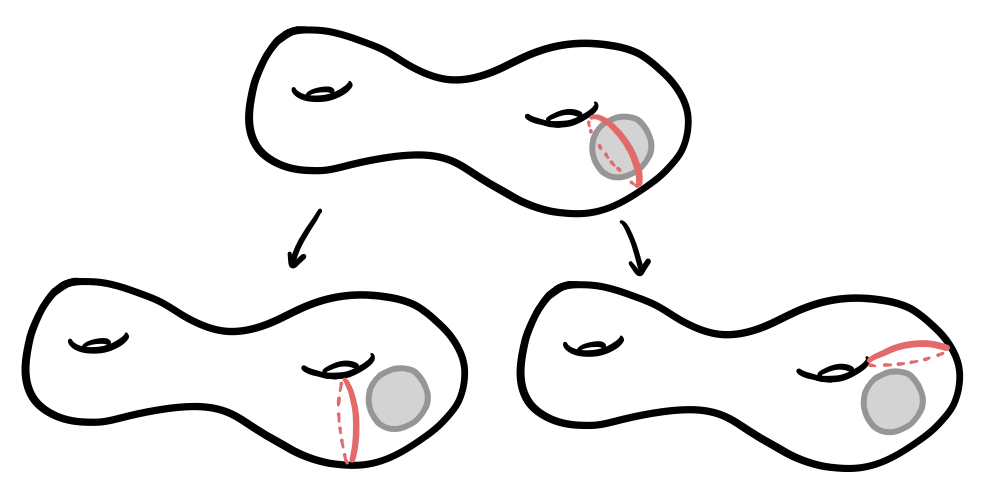}
    \caption{
    A curve which intersects the disk for stabilization, and two choices of isotopy for moving the curve off of the disk.
    \label{fig:slide1}}
    \end{figure}

    \begin{figure}
    \centering
    \includegraphics[width=1\linewidth]{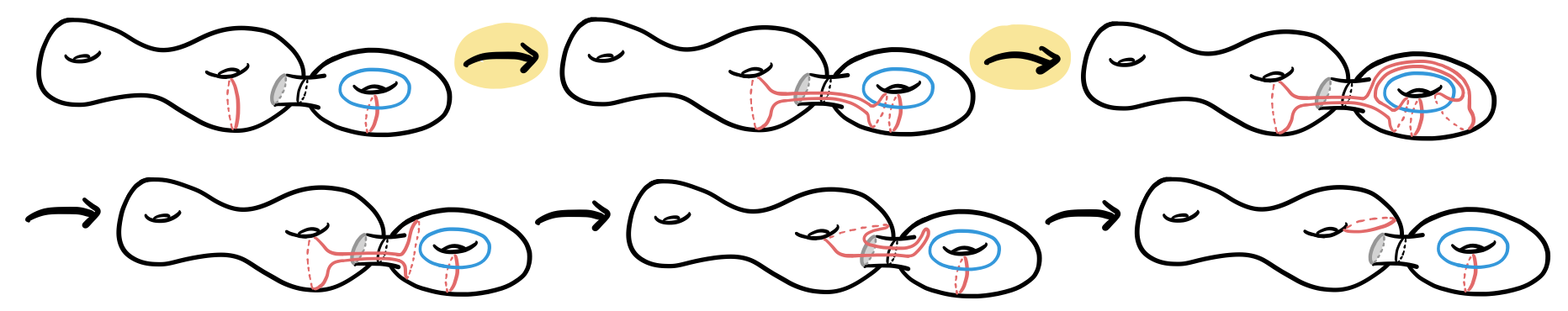}
    \caption{
    Using handleslides, denoted by the highlighted arrows, to move the curve over the attached handle.
    \label{fig:slide2}}
    \end{figure}

    \begin{claim}
        The map $p_1 \circ D$ factors through $\Alg^3/ \sim_h$
        and the resulting map
        $D_1$ is bijective.
    \end{claim}
    
    Recall that two handlebodies $H_1$ and $H_2$ bounding a given surface are equivalent if and only if their diagrams differ by handleslides \cite{johnson2006notes}.
    Given two splitting homomorphisms
    $(\phi_1, \phi_2), (\phi_1', \phi_2') \in \Alg^3$
    with $(\phi_1,\phi_2) \sim_h (\phi_1',\phi_2')$,
    by \autoref{lem:uniqueness} we see that
    $H(\phi_1) = H(\phi_1')$ and $H(\phi_2) = H(\phi_2')$.
    Therefore $(\Sigma_g, D(\phi_1),D(\phi_2)) \sim_h (\Sigma_g, D(\phi_1'), D(\phi_2'))$
    and thus the map factors through $\Alg^3/ \sim_h$, as desired.

    To see that the map $D_1 \colon \Alg^3/ \sim_h \to \Diag^3/\sim_h$ is injective,
    suppose that $D_1([\phi_1, \phi_2]) = D_1([\phi_1',\phi_2'])$ where 
    $[\phi_1,\phi_2],[\phi_1',\phi_2'] \in \Alg^3/ \sim_h$. 
    Then $H(\phi_1) = H(\phi_1')$ and $H(\phi_2) = H(\phi_2')$ and therefore by the definition of equivalence of handlebodies, we have for $i=1,2$ the commutative diagram
    \begin{equation*}
        \begin{tikzcd}[row sep=tiny]
            & \pi_1(H(\phi_i),\ast) \arrow{dd}{h_{i}}[swap]{\cong}\\
        \pi_1(\Sigma_{g}, \ast)
        \arrow[ur, twoheadrightarrow] \arrow[dr, twoheadrightarrow] & \\
            & \pi_1(H(\phi_i'),\ast)
        \end{tikzcd}
    \end{equation*}
    for some isomorphisms $h_{i}: \pi_1(H(\phi_i), \ast) \to \pi_1(H(\phi_i'), \ast)$, where the other maps are induced by inclusion.  From this, it follows that $(\phi_1,\phi_2) \sim_h (\phi_1', \phi_2')$. Thus $D_1$ is injective.

    To see that the map $D_1 \colon \Alg^3/ \sim_h \to \Diag^3/\sim_h$
    is surjective, we define a section
    \[\sigma \colon \texttt{Diag}^3/ \sim_h \to \texttt{Alg}^3/\sim_h.\]
    Let $[\Sigma_g, \alpha, \beta]$ denote the equivalence class of
    $(\Sigma_g, \alpha, \beta)$ in $\Diag^3 / \sim_h$.
    We define $\sigma([\Sigma_g, \alpha, \beta])$ by taking the diagram
    $(\Sigma_g, \alpha, \beta)$ with a particular choice of the curves
    $\alpha$ and $\beta$ (so they are no longer isotopy classes but fixed curves). 
    We then reverse the construction of the map $D$. That is, we consider each of the sets of curves $\alpha$ and $\beta$ separately and apply the construction just as in \autoref{lem:main} to obtain maps $\phi_1,\phi_2: \pi_1(\Sigma_g, \ast) \to F_g$, where here we have chosen orientations for each of the curves in $\alpha$ and $\beta$.  These maps $\phi_1, \phi_2$ are independent of the choice of representatives of the isotopy classes of the curves in $\alpha$ and $\beta$, as well as the choice of orientations, when we consider the result $[\phi_1,\phi_2]$ in $\Alg^3/\sim_h$, giving a map $\Diag^3 \to \Alg^3/\sim_h$. (Note that we have also implicitly chosen an ordering of the curves in each cut system in this construction; however because there are automorphisms of the free group permuting all of the canonical generators, this choice does not matter.) This map factors through to give a map $\sigma \colon \Diag^3/\sim_h \to \Alg^3/\sim_h$ which is a section to $D_1$ by \autoref{lem:uniqueness} together with the fact that two cut systems determine the same handlebody if and only if they differ by handleslides.
    Thus $D_1$ is surjective.

    \begin{claim}
        The map $p_2 \circ D_1$ factors through $\texttt{Alg}^3/ \sim_h, \sim_m$ and the resulting map $D_2$ is bijective.
    \end{claim}
    
    For well-definedness, suppose that $(\phi_1,\phi_2) \sim \cdots \sim (\phi_1', \phi_2')$ where each $\sim$ is either $\sim_h$ or $\sim_m$.  We must show that $(\Sigma_g, D(\phi_1), D(\phi_2)) \sim \cdots \sim (\Sigma_g, D(\phi_1'), D(\phi_2'))$ where each $\sim$ is either $\sim_h$ or $\sim_m$. By the previous step of the proof, we know that every $\sim_h$ equivalence of splitting homomorphisms produces diagrams that are equivalent with respect to $\sim_h$.  Assume $(\phi_1, \phi_2) \sim_m (\phi_1', \phi_2')$.  By the Dehn-Nielsen-Baer theorem, there exists an orientation-preserving diffeomorphism $\Sigma_g \to \Sigma_g$ that fixes the basepoint and realizes the isomorphism $\pi_1(\Sigma_g, \ast) \to \pi_1(\Sigma_g, \ast)$ that is contained in the assumption that $(\phi_1, \phi_2) \sim_m (\phi_1', \phi_2')$ \cite{farb2012primer}.  This then implies that $(\Sigma_g, D(\phi_1), D(\phi_2))$ and $(\Sigma_g, D(\phi_1'), D(\phi_2'))$ are equivalent using $\sim_h$ and $\sim_m$. Therefore, the resulting map $D_2$ is well-defined.  
    
    Assume $D_2(\llbracket \phi_1, \phi_2 \rrbracket) = D_2(\llbracket \phi_1', \phi_2' \rrbracket)$ where 
    $\llbracket \phi_1,\phi_2 \rrbracket,\llbracket\phi_1',\phi_2'\rrbracket \in \Alg^3/ \sim_h,\sim_m$.  We must show that $(\phi_1,\phi_2) \sim \cdots \sim (\phi_1', \phi_2')$ where each $\sim$ is either $\sim_h$ or $\sim_m$.  By assumption, we have that $(\Sigma_g, D(\phi_1), D(\phi_2)) \sim \cdots \sim (\Sigma_g, D(\phi_1'), D(\phi_2'))$ where each $\sim$ is either $\sim_h$ or $\sim_m$. Let $(\Sigma_g, \alpha, \beta)$ and  $(\Sigma_g, \alpha', \beta')$ be two diagrams in the above chain of relations such that $(\Sigma_g, \alpha, \beta) \sim_m (\Sigma_g, \alpha', \beta')$. Then there exists an orientation-preserving diffeomorphism 
    \begin{align*}
        F \colon \Sigma_g &\to \Sigma_g \\
            \alpha &\mapsto \alpha' \\
            \beta  &\mapsto \beta'
    \end{align*}
    which we can assume fixes the basepoint $\ast$. Let
    $(f_1, f_2) = 
    \sigma(\Sigma_g, \alpha, \beta)$ and 
    $(f_1', f_2') = \sigma(\Sigma_g, \alpha', \beta')$ where $\sigma$ is the map from the proceeding claim. 
    (Note that $\sigma$ is technically defined on $\texttt{Diag}^3/ \sim_h$, but we can similarly apply the same construction to any specific diagram with curves transverse to the generators $a_1,\ldots,b_g$ which are not considered up to isotopy.)
    Then we have for $i=1,2$ the commutative diagram
    \begin{equation*}
    \begin{tikzcd}[row sep=tiny]
        \pi_1(\Sigma_{g}, \ast) \arrow{dd}{\pi_1(F,\ast)}[swap]{\cong} 
        \arrow[dr, "f_i", twoheadrightarrow] &  \\
                & F_{g} \\
        \pi_1(\Sigma_{g}, \ast) \arrow[ur, swap, "f_i'", twoheadrightarrow] &
    \end{tikzcd}
    \end{equation*}
    for $i = 1,2$ where $\pi_1(F, \ast)$ is orientation-preserving. Therefore, we have $(f_1,f_2) \sim_m (f_1',f_2')$, so the chain of equivalences from $(\Sigma_{g}, D(\phi_{1}), D(\phi_{2}))$ to $(\Sigma_{g}, D(\phi_{1}'), D(\phi_{2}'))$ can be converted to a chain of equivalences from $(\phi_1,\phi_2)$ to $(\phi_{1}', \phi_{2}')$, and the map $D_2$ is injective. 
    
    It follows similarly that \[\sigma \colon \texttt{Diag}^3/ \sim_h \to \texttt{Alg}^3/\sim_h\] descends to a map \[\sigma \colon \texttt{Diag}^3/\sim_h, \sim_m \to \texttt{Alg}^3/\sim_h, \sim_m,\] and that it is a section for $D_2$. 

    \begin{claim}
        The map $p_3 \circ D_2$ factors through $\texttt{Alg}^3/ \sim_h, \sim_m, \sim_s$ and the resulting map $D_3$ is bijective.
    \end{claim}
    
    For well-definedness, suppose $(\phi_1, \phi_2) \sim_s (\phi_1', \phi_2')$. Then, by construction of the map $D$ we will have $\llbracket \Sigma_{g+1}, D(\phi_1), D(\phi_2) \rrbracket \sim_s \llbracket \Sigma_g, D(\phi_1'), D(\phi_2') \rrbracket$.  Well-definedness therefore follows.
    
    Similarly, by construction of $D$, if \[\llbracket \Sigma_{g+1}, D(\phi_1), D(\phi_2) \rrbracket \sim_s \llbracket \Sigma_g, D(\phi_1'), D(\phi_2') \rrbracket,\] then $\llbracket \phi_1,\phi_2 \rrbracket \sim_s \llbracket \phi_1',\phi_2' \rrbracket$, so $D_3$ is injective. If $\llbracket \Sigma_{g+1}, \alpha, \beta \rrbracket \sim_s \llbracket \Sigma_g, \alpha',\beta' \rrbracket$, then again by construction $\llbracket \sigma(\Sigma_{g+1}, \alpha, \beta) \rrbracket \sim_s \llbracket \sigma(\Sigma_{g}, \alpha',\beta') \rrbracket$, so $\sigma$ factors through to give a section of $D_3$.  

    We note that this claim follows more immediately than the previous ones since the definition of the algebraic relation $\sim_s$ is explicit in the sense that it does not involve any choices, as compared to the definitions of the algebraic relations $\sim_h$ and $\sim_m$. In later sections, we will define other notions of stabilization and they will be similarly explicit.

    \begin{claim}
        The map $R$ factors through ($\texttt{Diag}^3/ \sim_h, \sim_m)/ \sim_s$ and the resulting map is bijective.
    \end{claim}
    
Every closed, orientable 3-manifold admits a Heegaard decomposition (for example, by taking a triangulation and taking the Heegaard splitting surface to be the boundary of a regular neighborhood of the 1-skeleton). Let $Y$ be a closed, orientable 3-manifold and let $S \subset Y$ be a Heegaard splitting surface of genus $g$. Choose an identification of $S$ with $\Sigma_g$.  We have $Y = H_1 \cup_S H_2$ for two handlebodies $H_1$ and $H_2$, and by choosing collections of $g$ disjoint properly embedded disks $\mathbb{D}_1$ and $\mathbb{D}_2$ in $H_1$ and $H_2$ respectively that cut $H_1$ and $H_2$ into a $3$-ball, then by looking at $(S, \partial \mathbb{D}_1, \partial \mathbb{D}_2)$, we have a diagram whose topological realization is $Y$. (Here the topological realization is as before; thicken $S$, glue disks to $\mathbb{D}_1$ and $\mathbb{D}_2$ on their respective sides, and glue in 3-balls to the resulting spheres.) Using the identification of $S$ with $\Sigma_g$, we obtain a diagram $(\Sigma_g, \alpha, \beta) \in \texttt{Diag}^3$ where $\alpha$ and $\beta$ are the respective images of $\partial \mathbb{D}_1$ and $\partial \mathbb{D}_2$, and the image of $(\Sigma_g, \alpha, \beta)$ in $\texttt{Man}^3$ is $Y$.  Therefore the map $R: \texttt{Diag}^3 \to \texttt{Man}^3$ is surjective.  

The factored through map $R: (\texttt{Diag}^3/ \sim_h, \sim_m)/ \sim_s \to \Man^3$ is injective by the Reidemeister-Singer theorem, and hence a bijection. By composing $D_3$ with this map, we obtain the theorem.
\end{proof}

\subsection{Bridge split links in 3-manifolds}
\label{sec:links_3-manifolds}

In \autoref{sec:closed_3-manifolds} we used that any pair of Heegaard splittings of the same fixed $3$-manifold become isotopic after some number of stabilization operations (which corresponds to connect summing with the genus $1$ splitting of the $3$-sphere).
The goal of this section will be translating the corresponding uniqueness up to perturbation statement for bridge splittings of links in $3$-manifolds into the algebra of bounding homomorphisms.

\begin{ttheorem}[\cite{hayashi1998stable,zupan2013bridgecomplexity}]
    Let $L$ be a link in a fixed Heegaard split $3$-manifold.
    Then any two bridge splittings of $L$ become isotopic after some number of perturbations.
\end{ttheorem}

Let $\Man^{(3,1)}$ denote the set of closed, connected, oriented 3-manifolds $M$ together with a link $L \subset M$ modulo orientation-preserving diffeomorphisms preserving the links. Let $\Alg^{(3,1)}$ denote the set of pairs $(\phi_1,\phi_2)$ such that $\phi_1,\phi_2 \colon \pi_1(\Sigma_g - \{p_1,\ldots,p_{2b} \}, \ast) \twoheadrightarrow F_{g+b}$ are bounding homomorphisms. Throughout this section, let $a_{j}$, $b_{j}$, and $p_{k}$ denote the generators of $\pi_1(\Sigma_g - \{p_1,\ldots,p_{2b} \}, \ast)$, where $a_{j}$, $b_{j}$ are the surface generators (for $j=1,\ldots,g$) and $p_{k}$ are the puncture generators (for $k=1,\ldots,2b$), and let $h_j$, $t_{\ell}$ denote the generators of $F_{g+b}$ (for $\ell=1,\ldots,b$). 

We define the map $(M, L) \colon \Alg^{(3,1)} \to \Man^{(3,1)}$ as follows. Given $(\phi_1,\phi_2) \in \Alg^{(3,1)}$, let $H(\phi_1)$ and $H(\phi_2)$ be the handlebodies bounding $\Sigma_g$ that result from the application of \autoref{thm:main}, and further let $T(\phi_1) \subset H(\phi_1)$ and $T(\phi_2) \subset H(\phi_2)$ be the resulting trivial tangles in these handlebodies.  We then define $(M,L)(\phi_1,\phi_2)$ to be the 3-manifold $H(\phi_1) \cup_{\Sigma_g} H(\phi_2)$ (with the orientation as in \autoref{sec:closed_3-manifolds}) together with the link $T(\phi_1) \cup_{\{p_1,\ldots,p_{2b} \}} T(\phi_2)$.

We now define the analogues of the equivalence relations $\sim_h,\sim_m$, and $\sim_s$ on $\Alg^{3}$ in this setting. Given $(\phi_1,\phi_2), (\phi_1', \phi_2') \in \Alg^{(3,1)}$, we write $(\phi_1, \phi_2) \sim_h (\phi_1', \phi_2')$ if for $i = 1,2$ there exist isomorphisms $h_i \colon F_{g+b} \to F_{g+b}$ such that the following diagram commutes. 
\begin{equation*}
    \begin{tikzcd}[row sep=tiny]
        & F_{g+b} \arrow{dd}{h_{i}}[swap]{\cong}\\
    \pi_1(\Sigma_{g}- \{p_1,\ldots,p_{2b}\}, \ast)
    \arrow[ur, "\phi_i", twoheadrightarrow] \arrow[dr, swap, "\phi_i'", twoheadrightarrow] & \\
        & F_{g+b}
    \end{tikzcd}
\end{equation*}

Let $m \colon \pi_1(\Sigma_g - \{p_1,\ldots,p_{2b} \}, \ast) \to \pi_1(\Sigma_g - \{p_1,\ldots,p_{2b} \}, \ast)$ be an automorphism that preserves the conjugacy classes of $p_1,\ldots,p_{2b}$ setwise.  Then $m$ descends to an automorphism $\pi_1(\Sigma_g, \ast) \to \pi_1(\Sigma_g,\ast)$, by the surjective map $\pi_1(\Sigma_g - \{p_1,\ldots,p_{2b} \}, \ast) \twoheadrightarrow \pi_1(\Sigma_g, \ast)$ which sends each $p_i$ to the identity and is the identity on all of the elements $a_1,b_1,\ldots,a_g,b_g$. We call $m$ \emph{orientation-preserving} if this corresponding automorphism $\pi_1(\Sigma_g, \ast) \to \pi_1(\Sigma_g,\ast)$ is orientation-preserving. Given $(\phi_1,\phi_2), (\phi_1', \phi_2') \in \Alg^{(3,1)}$, we write $(\phi_1,\phi_2) \sim_m (\phi_1',\phi_2')$ if there exists an orientation-preserving isomorphism $m \colon \pi_1(\Sigma_{g} - \{p_1,\ldots,p_{2b}\}, \ast) \to \pi_1(\Sigma_{g}- \{p_1,\ldots,p_{2b}\}, \ast)$ so that for $i = 1,2$ the following diagram commutes. 
\begin{equation*}
    \begin{tikzcd}[row sep=tiny]
        \pi_1(\Sigma_{g}- \{p_1,\ldots,p_{2b}\}, \ast) \arrow{dd}{m}[swap]{\cong} 
        \arrow[dr, "\phi_i", twoheadrightarrow] &  \\
                & F_{g+b} \\
        \pi_1(\Sigma_{g}- \{p_1,\ldots,p_{2b}\}, \ast) \arrow[ur, swap, "\phi_i'", twoheadrightarrow] &
    \end{tikzcd}
\end{equation*}

While $\sim_h$ and $\sim_m$ as defined in this section are very similar to $\sim_h$ and $\sim_m$ as defined in \autoref{sec:closed_3-manifolds}, the analogue of $\sim_s$ is a bit more complicated.  We will have one such relation $\sim_{s_g}$ which is directly analogous to $\sim_s$ in \autoref{sec:closed_3-manifolds}; namely it captures the idea of increasing the genus of the Heegaard splitting while leaving everything else fixed.  In addition, there are two relations $\sim_{s_b^1}$ and $\sim_{s_b^2}$ which will correspond to the idea of modifying a link in bridge position by perturbation.

Given $(\phi_1,\phi_2), (\phi_1', \phi_2') \in \Alg^{(3,1)}$, we now define $\sim_{s_g}$.  We note that $\phi_i'$ will be a map from $\pi_1(\Sigma_g - \{p_1,\ldots,p_{2b} \},\ast) \twoheadrightarrow F_{g+b}$ while $\phi_i$ will be a map from $\pi_1(\Sigma_{g+1}-\{p_1,\ldots,p_{2b} \},\ast) \twoheadrightarrow F_{g+1+b}$. We say $(\phi_1, \phi_2) \sim_{s_g} (\phi_1', \phi_2')$ if $\phi_i(a_j) = \phi_i'(a_j)$, $\phi_i(b_j) = \phi_i'(b_j)$, and $\phi_i(p_k) = \phi_i'(p_k)$ for $i = 1,2$, $j= 1,\ldots,g$, and $k = 1,\ldots,2b$ (where we are identifying $F_{g+b}$ naturally as a subset of $F_{g+1+b}$, identifying the $h_i$ generators in $F_{g+b}$ with $h_i$ in $F_{g+1+b}$ and similarly with $t_i$), and the rest of the generators are mapped as follows. 
 \begin{align*}
     \phi_1(a_{g+1}) &= h_{g+1}\\ 
     \phi_1(b_{g+1}) &= 1 \\
     \phi_2(a_{g+1}) &= 1 \\ 
     \phi_2(b_{g+1}) &= h_{g+1}
 \end{align*}

Finally, we define $\sim_{s_b^1}$ and $\sim_{s_b^2}$, which correspond to perturbation of the tangle strands. See \autoref{fig:stab1}. Suppose now that $b > 0$.  There are two such operations since we can either push a tangle strand from the side corresponding to $\phi_1$ across $\Sigma_g$, or we can push a tangle strand corresponding to $\phi_2$. The motivation for this comes from investigating \autoref{fig:stab1} and imagining applying the operation $\sigma$ from the proof of \autoref{thm:Alg_3_to_Man_3} to the before and after parts of the figure. Let $(\phi_1,\phi_2), (\phi_1', \phi_2') \in \Alg^{(3,1)}$. We note that $\phi_i'$ will be a map from $\pi_1(\Sigma_g - \{p_1,\ldots,p_{2b} \},\ast) \twoheadrightarrow F_{g+b}$ while $\phi_i$ will be a map from $\pi_1(\Sigma_g-\{p_1,\ldots,p_{2b},p_{2b+1},p_{2b+2}\},\ast) \twoheadrightarrow F_{g+b+1}$. Assume without loss of generality that $\phi_1'(p_{2b})=\phi_2'(p_{2b})=t_b$. (We can do this because we first mod out by mapping class group elements; see the proof of \autoref{thm:Alg_3_to_Man_3}, claim 1, for details.)
We write $(\phi_1,\phi_2) \sim_{s_b^1} (\phi_1',\phi_2')$ if
  \begin{align*}
     \phi_1(p_{2b}) &= t_{b+1}\\ 
     \phi_1(p_{2b+1}) &= (t_{b+1})^{-1} \\
     \phi_1(p_{2b+2}) &=  t_b \\
   \phi_2(p_{2b}) &= t_b \\ 
     \phi_2(p_{2b+1}) &= t_{b+1} \\ 
     \phi_2(p_{2b+2}) &= (t_{b+1})^{-1}
 \end{align*}
and $\phi_i$ and $\phi_i'$ agree for all other elements in the generating sets (suitably identifying the groups) for $i = 1,2$.  We similarly define $\sim_{s_b^2}$ by swapping the roles of the indices $1$ and $2$. 

\begin{figure}
    \centering
    \includegraphics[width=.9\linewidth]{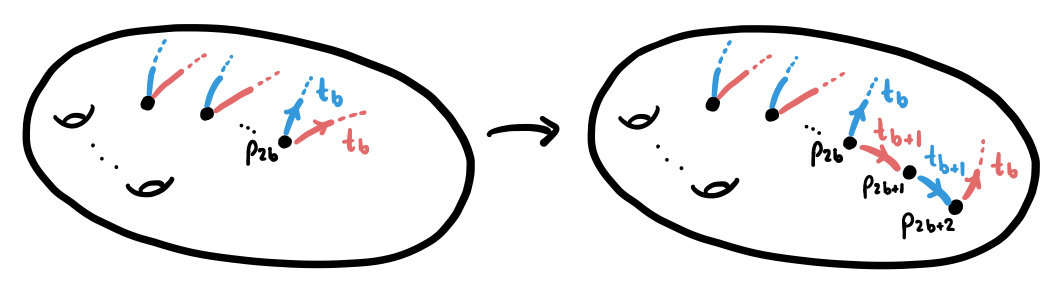}
    \caption{Performing a perturbation, where one of the pink tangle strands is being pulled through the surface. This adds a tangle strand to each side and increases the number of punctures by two. If $\phi_1$ corresponds to pink and $\phi_2$ corresponds to blue, then this is a picture of the $\sim_{s_b^1}$ version of perturbation. Switching the colors would give a picture for the $\sim_{s_b^2}$ version. 
    \label{fig:stab1}}
\end{figure}

Let $\sim$ denote the equivalence relation on $\Alg^{(3,1)}$ generated by $\sim_h, \sim_m, \sim_{s_g}, \sim_{s_b^1}$ and $\sim_{s_b^2}$.  
 
\begin{theorem}
    \label{thm:Alg_3_1_to_Man_3_1}
    The map
    $(M, L) \colon \Alg^{(3,1)} \to \Man^{(3,1)}$
    descends to $\Alg^{(3,1)}/ \sim$ and the resulting map is a bijection.  
\end{theorem}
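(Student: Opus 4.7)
The plan is to mirror the proof of \autoref{thm:Alg_3_to_Man_3} by factoring $(M,L)$ through an intermediate set $\Diag^{(3,1)}$ of bridge diagrams $(\Sigma_g, \alpha, \beta, \mathfrak{a}, \mathfrak{b})$, where $\alpha, \beta$ are cut systems and $\mathfrak{a}, \mathfrak{b}$ are shadow arc systems with endpoints at $\{p_1,\ldots,p_{2b}\}$, taken up to isotopy rel punctures. The map $D \colon \Alg^{(3,1)} \to \Diag^{(3,1)}$ records the output of \autoref{thm:main} applied to each $\phi_i$ separately (stopping at the diagram rather than realizing the tangle), and the topological realization $R \colon \Diag^{(3,1)} \to \Man^{(3,1)}$ produces $H(\alpha) \cup_{\Sigma_g} -H(\beta)$ together with the link built from $\{p_1,\ldots,p_{2b}\} \times I$ inside $\Sigma_g \times I$ capped off by the pushed-in shadow arcs on each side. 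Then I would prove bijectivity in four stages, matching each algebraic equivalence with the corresponding diagrammatic one.

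First I would descend through $\sim_h$ and $\sim_m$: \autoref{lem:uniqueness} shows that $\sim_h$-equivalent bounding homomorphisms yield the same trivial-tangle-in-handlebody pair, and hence the same isotopy class of $(\alpha, \mathfrak{a})$ on one side and $(\beta, \mathfrak{b})$ on the other; conversely, distinct pairs inducing the same homomorphism can only differ by handleslides of curves and arcs. The Dehn-Nielsen-Baer theorem for punctured surfaces realizes any algebraic orientation-preserving automorphism which setwise preserves the conjugacy classes of the $p_i$ by an honest orientation-preserving diffeomorphism of $(\Sigma_g, \{p_1,\ldots,p_{2b}\})$, matching $\sim_m$ on both sides. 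A section $\sigma$ on the diagrammatic side is built as in the closed case by applying the formula of \autoref{lem:main}, giving injectivity and surjectivity at each stage. The genus stabilization $\sim_{s_g}$ is then handled exactly as $\sim_s$ was in \autoref{thm:Alg_3_to_Man_3}, since the newly added curves live in a small disk disjoint from the punctures.

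For the bridge stabilizations $\sim_{s_b^1}$ and $\sim_{s_b^2}$, I would first use $\sim_m$ to normalize the puncture involved in the perturbation into standard position (so that $\phi_i'(p_{2b}) = t_b$), and then compute directly from \autoref{lem:main} that the bounding homomorphisms read off before and after the topological perturbation shown in \autoref{fig:stab1} differ exactly as in the definitions of $\sim_{s_b^i}$; the converse direction is the same calculation read backwards. Surjectivity of $R$ holds because every link in a closed oriented 3-manifold admits a bridge splitting (start from any Heegaard splitting and isotope so that the link meets each handlebody in a trivial tangle), and the final injectivity on the quotient is the content of the Hayashi-Zupan topological input theorem combined with Reidemeister-Singer for the underlying Heegaard surfaces. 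The main obstacle will be the bookkeeping for $\sim_{s_b^i}$: a topological perturbation is local and can occur near any puncture from either side of the bridge surface, whereas the algebraic relation is stated in a single canonical form with the perturbation at $p_{2b}$, so I must verify that after quotienting by $\sim_h$ and $\sim_m$ every perturbation becomes equivalent to one in this canonical form, which amounts to checking that the relevant punctured mapping classes permuting the $p_i$ and swapping sides are orientation-preserving and preserve the remaining conjugacy classes, so that the normalization does not disturb the earlier equivalences.
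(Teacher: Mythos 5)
Your proposal follows essentially the same route as the paper: factor $(M,L)$ through an intermediate set of curve-and-arc diagrams, descend stage by stage through $\sim_h$, $\sim_m$, and the stabilization relations matching each with its diagrammatic counterpart, and invoke the Hayashi--Zupan perturbation theorem together with Reidemeister--Singer for the final injectivity. The paper's own proof is in fact terser than yours (it leaves most of the stage-by-stage verification to the reader, citing the folklore fact that tangles in a handlebody are isotopic iff their curve-and-arc systems differ by isotopies and slides), so your additional attention to the normalization bookkeeping for $\sim_{s_b^i}$ is a correct filling-in of details rather than a deviation.
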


\begin{proof}
As in the proof of \autoref{thm:Alg_3_to_Man_3}, we consider an intermediate set $\texttt{Diag}^{(3,1)}$ whose elements are \emph{diagrams}, that is, tuples $(\Sigma_g, \alpha, \beta, S_{\alpha}, S_{\beta})$
where $\alpha,\beta$ are cut systems on $\Sigma_g$ and $S_{\alpha}, S_{\beta}$ are shadow diagrams for trivial tangles with endpoints $\{p_1, \ldots, p_{2b}\}$ (which are all only considered up to isotopy). In other words, $(\alpha, S_{\alpha})$ is a curve-and-arc system for one tangle and handlebody, and $(\beta, S_{\beta})$ is a curve-and-arc system for the other.
(Refer to the beginning of \autoref{sec:setup} for the definition of \textit{curve-and-arc system}.) Then the map $(M,L)$ factors as shown below.
    \begin{equation*}
    \begin{tikzcd}
        \Alg^{(3,1)}
        \arrow[rr, "{(M,L)}"] \arrow[dr, "D"] 
        & & \Man^{(3,1)}  \\
        & \texttt{Diag}^{(3,1)} \arrow[ur, "R"] &
    \end{tikzcd}
    \end{equation*}
The map $R \colon \Diag^{(3,1)} \to \Man^{(3,1)}$ is the topological realization of a diagram $(\Sigma_g, \alpha, \beta, S_{\alpha}, S_{\beta})$, where we cross $\Sigma_g$ with an interval, glue disks on the respective sides to $\alpha$ and $\beta$, glue three balls to the resulting sphere boundary components to obtain two handlebodies, and then push the interiors of the shadow arcs in $S_{\alpha}$ and $S_{\beta}$ into their respective handlebody to obtain two tangles. The map $D \colon \texttt{Alg}^{(3,1)} \to \texttt{Diag}^{(3,1)}$ is the construction of $(M,L)$
using \autoref{thm:main}, but where we stop at just a diagram with curves $\alpha$ and arcs $S_{\alpha}$ corresponding to $\phi_1$ and curves $\beta$ and arcs $S_{\beta}$ corresponding to $\phi_2$.

In the proof of \autoref{thm:Alg_3_to_Man_3} (see claim 1) we used the fact that two handlebodies bounding a given surface are equivalent if and only if their diagrams differ by handleslides. In this proof the following fact will take its place: two tangles in a handlebody are isotopic (fixing their boundary points) if and only if their curve-and-arc systems are related by a sequence of isotopies and slides.  This folklore fact appears, for instance, in \cite[Prop.~3.1]{meier2018bridge4manifolds} and \cite[Prop.~5.2]{meier2020filling}, both citing \cite{johannson1995topology} for the proof idea. Our topological input theorem can now be translated into the following diagrammatic statement: two diagrams of the same link in a $3$-manifold are related by a sequence of perturbations, depurtubations, and moves from the above fact. In other words, these are diagrammatic equivalence relations which have the algebraic counterparts $\sim_h, \sim_m, \sim_{s_g}, \sim_{s_b^1}$ and $\sim_{s_b^2}$ as described above.

The rest of the proof then follows in similar fashion as before; mod out the map $D$ by these diagrammatic equivalence relations, and then show each time that the map factors through and a bijection between quotients is achieved. We leave the details to the reader. Then we use both our topological input theorem from this section and the Reidemeister-Singer theorem (as previously) to achieve the result.
\end{proof}

\begin{remark} \label{rem:components}
    \autoref{thm:Alg_3_1_to_Man_3_1} is in fact a generalization of \autoref{thm:Alg_3_to_Man_3} where we take the links to be empty.  Note also that the number of components of the link resulting from a given pair $(\phi_1,\phi_2)$ can easily be read off from the maps $\phi_1$ and $\phi_2$.  We can thereby describe the partition of the set $\Alg^{(3,1)}$ so that the different equivalence classes correspond in the bijection above to manifolds with links of $0,1,2\ldots$ components (in particular, $\sim$ respects this partition).  In particular, \autoref{thm:Alg_3_to_Man_3} is recovered by restricting to the case where $b = 0$.
\end{remark}

\section{Closed 4-manifolds and bridge trisected surfaces}
\label{sec:4-manifolds}

Here we continue the translation of topology into algebra that we started in the previous section, but moving up a dimension. The proofs in this section follow similarly to those in the previous, and thus we omit some of the details.

\subsection{Closed 4-manifolds}
\label{sec:closed_4-manifolds}

In 2016 Gay and Kirby introduced \emph{trisections} of $4$-manifolds, which can be seen as 4-dimensional analogues of Heegaard splittings.

\begin{definition}[Trisection of a $4$-manifold \cite{gay2016trisecting}]
A \emph{$(g;k_1,k_2,k_3)$-trisection} of a smooth, closed, connected, oriented $4$-manifold $X^4$ is a decomposition $X^4=X_1 \cup X_2 \cup X_3$ with the following properties.
\begin{enumerate}
\item Each $X_i$ is a $4$-dimensional $1$-handlebody, that is, diffeomorphic to $\natural^{k_i} (S^1 \times B^3)$. If $k_i=0$, we interpret this boundary connected sum as $B^4$.
\item The $X_i$'s intersect pairwise in genus $g$ handlebodies; that is, their pairwise intersections are diffeomorphic to $H_g \vcentcolon = \natural^g (S^1 \times B^2)$.
\item The triple intersection of the $X_i$'s is a genus $g$ oriented surface (denoted $\Sigma_g$), called the \emph{central surface}.
\end{enumerate}
If $k_1=k_2=k_3 = \vcentcolon k$, we call the trisection \textit{balanced}, and denote this by $(g;k)$.
\end{definition}

A trisection is determined by its \emph{spine}, namely the central surface along with the three handlebodies it bounds, as there is a unique way, up to diffeomorphism, to fill this in with $4$-dimensional $1$-handlebodies \cite{laudenbach1972note}. Every smooth, closed, connected, oriented $4$-manifold admits a trisection, and a trisection of a given $4$-manifold is unique up to stabilization (see \cite{gay2016trisecting} for a proof of this result and the precise definition of stabilization). 

\begin{ttheorem}[\cite{gay2016trisecting}]
    Any pair of trisections of a fixed
    $4$-manifold become isotopic after
    some number of stabilizations.
\end{ttheorem}

Technically, there are two notions of stabilization: a balanced one (which increases the genus of the central surface by three) and an unbalanced one (which increases the genus by one). We will assume all trisections are balanced and work only with balanced stabilizations. We can make this assumption without loss of generality as any unbalanced trisection can be made balanced with some number of unbalanced stabilizations. Note that we could easily expand the algebraic relations included in this section to include those which would correspond to unbalanced stabilizations, but for the sake of simplicity of notation we have not done this.

Let $\Man^4$ denote the set of closed, connected, oriented, smooth 4-manifolds up to orientation-preserving diffeomorphism. Let $\Alg^4$ denote the set of triples $(\phi_1, \phi_2, \phi_3)$ of surjective homomorphisms $\pi_1(\Sigma_g, \ast) \twoheadrightarrow F_g$ for some integer $g$ such that $M(\phi_1, \phi_2), M(\phi_2,\phi_3),$ and $M(\phi_3,\phi_1)$ are all diffeomorphic to $\#^k (S^1 \times S^2)$ for some integer $k$, where $M$ is as in \autoref{sec:closed_3-manifolds}. By the Poincar\'{e} conjecture \cite{perelman2003finite}, this is equivalent to the property that the pushout of $\phi_i$ and $\phi_j$ for $i \neq j$ are (necessarily finitely-generated) free groups. Note that here, an individual $\phi$ is a bounding homomorphism for the special case $b=0$ which satisfies the additional pushout property, and a triple $(\phi_1, \phi_2, \phi_3) \in \Alg^4$ is a \textit{group trisection} of the fundamental group of a $4$-manifold, as described in \cite{abrams2018group} and \autoref{sec:intro}.

We have a map $X \colon \Alg^4 \to \Man^4$ just as in \cite{abrams2018group}. Namely, given $(\phi_1,\phi_2,\phi_3) \in \Alg^4$, identify the handlebodies $H(\phi_1),H(\phi_2),H(\phi_3)$ along their common boundary $\Sigma_g$. Now the three 3-manifolds $M(\phi_i,\phi_j) = H(\phi_i) \cup_{\Sigma_g} H(\phi_j)$ for $i \neq j$ are all diffeomorphic to some $\#^k (S^1 \times S^2)$ by the assumption that the pairwise pushouts of the $\phi_i$ and $\phi_j$ are free groups.  Therefore we may glue in three 4-dimensional 1-handlebodies (uniquely by \cite{laudenbach1972note}) to obtain a smooth, closed 4-manifold $X(\phi_1,\phi_2,\phi_3)$.  We orient $M(\phi_1,\phi_2) \subset X(\phi_1,\phi_2,\phi_3)$ as in \autoref{sec:closed_3-manifolds} and we orient $X$ so that the orientation restricted to the 4-dimensional 1-handlebody that is glued to $M(\phi_1,\phi_2)$ induces this orientation on $M(\phi_1,\phi_2)$.  

We define the relations $\sim_h$ and $\sim_m$ in an analogous fashion as was done for $\Alg^3$ in \autoref{sec:closed_3-manifolds}. The stabilization relation $\sim_s$ on $\Alg^4$ is defined as follows. Here $\phi_i'$ will be a map from $\pi_1(\Sigma_g,\ast) \twoheadrightarrow F_g$ while $\phi_i$ will be a map from $\pi_1(\Sigma_{g+3},\ast) \twoheadrightarrow F_{g+3}$. Let $a_{i}$, $b_{i}$ be the generators of $\pi_1(\Sigma_g,\ast)$ (and, abusing notation, $\pi_1(\Sigma_{g+3},\ast)$), and $h_i$ be the generators of $F_g$ (and, abusing notation, $F_{g+3}$). We say $(\phi_1, \phi_2, \phi_3) \sim_s (\phi_1', \phi_2', \phi_3')$ if $\phi_i(a_j) = \phi_i'(a_j)$ and $\phi_i(b_j) = \phi_i'(b_j)$ for $i = 1,2,3$ and $j= 1,\ldots,g$ (where we are identifying $F_g$ naturally as a subset of $F_{g+3}$), and the rest of the generators are mapped as follows.
 \begin{align*}
     \phi_1(a_{g+1}) &= h_{g+1} & \phi_2(a_{g+1}) &= h_{g+1} & \phi_3(a_{g+1}) &= 1\\ 
     \phi_1(b_{g+1}) &= 1 & \phi_2(b_{g+1}) &= 1 & \phi_3(b_{g+1}) &= h_{g+1} \\
     \phi_1(a_{g+2}) &= h_{g+2} & \phi_2(a_{g+2}) &= 1 & \phi_3(a_{g+2}) &= h_{g+2}\\ 
     \phi_1(b_{g+2}) &= 1 & \phi_2(b_{g+2}) &= h_{g+2} & \phi_3(b_{g+2}) &= 1 \\
     \phi_1(a_{g+3}) &= 1 & \phi_2(a_{g+3}) &= h_{g+3} & \phi_3(a_{g+3}) &= h_{g+3}\\ 
     \phi_1(b_{g+3}) &= h_{g+3} & \phi_2(b_{g+3}) &= 1 & \phi_3(b_{g+3}) &= 1
 \end{align*}
 
This is an algebraic analogue of the topological operation of stabilizing a trisection, just as in \autoref{sec:closed_3-manifolds} where we presented the analogous notion for stabilizations of Heegaard splittings. As before, we let the equivalence relation $\sim_s$ be the symmetrization of the relation $\sim_s$.

As we now have three maps, we must define one more relation which does not have a counterpart in \autoref{sec:3-manifolds}, corresponding to cyclically permuting the ``colors'' of the curves on a trisection diagram (that is, cyclically permuting the roles of the cut system curves $\alpha$, $\beta$, and $\gamma$). We say  $(\phi_1, \phi_2, \phi_3) \sim_c (\phi_1', \phi_2', \phi_3')$ if $\phi_1=\phi'_2$, $\phi_2=\phi'_3$ , and $\phi_3=\phi'_1$.

Let $\sim$ denote the equivalence relation on $\Alg^4$ generated by $\sim_h$, $\sim_m$, $\sim_s$, and $\sim_c$. A result very similar to the following theorem is stated in \cite[Thm.~5]{abrams2018group}.

\begin{theorem}[compare with \cite{abrams2018group}]
    \label{thm:Alg_4_to_Man_4}
    The map
    $X \colon \texttt{\textup{Alg}}^{4} \to \texttt{\textup{Man}}^{4}$
    descends to $\texttt{\textup{Alg}}^{4}/ \sim$
    and the resulting map is a bijection.  
\end{theorem}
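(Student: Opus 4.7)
The plan is to mirror the strategy of \autoref{thm:Alg_3_to_Man_3}, factoring $X$ through an intermediate set $\Diag^{4}$ of \emph{trisection diagrams} $(\Sigma_{g}, \alpha, \beta, \gamma)$, where each of $\alpha, \beta, \gamma$ is a cut system on $\Sigma_{g}$ (considered up to isotopy) and each pair of cut systems determines a genus-$g$ Heegaard diagram for some $\#^{k}(S^{1} \times S^{2})$. The map $D \colon \Alg^{4} \to \Diag^{4}$ would send $(\phi_{1}, \phi_{2}, \phi_{3})$ to $(\Sigma_{g}, D(\phi_{1}), D(\phi_{2}), D(\phi_{3}))$ via three applications of \autoref{thm:main}, and the realization map $R \colon \Diag^{4} \to \Man^{4}$ would glue three handlebodies along the central surface and fill the three resulting copies of $\#^{k}(S^{1} \times S^{2})$ with $4$-dimensional $1$-handlebodies, uniquely by Laudenbach-Po{\'e}naru~\cite{laudenbach1972note}.

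I would then descend through the four equivalence relations in turn, at each stage matching the algebraic relation with its diagrammatic counterpart. First, $\sim_{h}$ on both sides corresponds to handleslides within a single cut system, so $D$ descends to a bijection $D_{1} \colon \Alg^{4}/{\sim_{h}} \to \Diag^{4}/{\sim_{h}}$ whose inverse is a section $\sigma$ built coordinate-wise, exactly as in the proof of \autoref{thm:Alg_3_to_Man_3}. Next, $\sim_{m}$ matches, via the Dehn-Nielsen-Baer theorem, with the action of the orientation-preserving mapping class group of $\Sigma_{g}$ applied simultaneously to all three cut systems. Then $\sim_{s}$ is the algebraic transcription of a balanced trisection stabilization, with the three new pairs of generators recording exactly the standard spine of the genus-$3$ trisection of $S^{4}$ that is connect-summed in. Finally $\sim_{c}$ accounts for cyclic relabeling of the pieces $(X_{1}, X_{2}, X_{3})$, which corresponds to cyclically permuting the colors $(\alpha, \beta, \gamma)$ on the diagram.

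Surjectivity of $R$ would follow from the existence half of the topological input theorem stated above: every smooth, closed, connected, oriented $4$-manifold admits a balanced trisection, and reading off a cut system in each of the three handlebodies of the spine produces a diagram in $\Diag^{4}$. Injectivity of the induced map $\Diag^{4}/{\sim_{h}, \sim_{m}, \sim_{s}, \sim_{c}} \to \Man^{4}$ would then follow from the uniqueness half: any two trisections become isotopic after some number of balanced stabilizations, and the ambiguity in labeling the three pieces is captured by $\sim_{c}$.

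The hard part will be verifying that the pairwise freeness condition built into the definition of $\Alg^{4}$ interacts cleanly with the realization map. For $(\phi_{1}, \phi_{2}, \phi_{3}) \in \Alg^{4}$, the pairwise pushouts being free groups $F_{k}$ ensure, via Perelman~\cite{perelman2003finite}, that each $M(\phi_{i}, \phi_{j})$ is diffeomorphic to $\#^{k}(S^{1} \times S^{2})$, after which Laudenbach-Po{\'e}naru provides a unique smooth filling by $\natural^{k}(S^{1} \times B^{3})$, so that $X(\phi_{1}, \phi_{2}, \phi_{3})$ is well-defined up to orientation-preserving diffeomorphism. Once this rigidity is in place, the remaining diagrammatic-to-algebraic bookkeeping parallels the $3$-dimensional case of \autoref{thm:Alg_3_to_Man_3} closely. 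The resulting theorem is essentially that of Abrams-Gay-Kirby~\cite{abrams2018group}; the novelty lies in its uniform presentation alongside the other three rows of \autoref{fig:chart}.
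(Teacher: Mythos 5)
Your proposal is correct and takes essentially the same approach as the paper: the paper's own proof of this theorem is a one-sentence remark that the argument proceeds analogously to the closed 3-manifold case (\autoref{thm:Alg_3_to_Man_3}), with the Reidemeister--Singer theorem replaced by Gay--Kirby's uniqueness of trisections up to stabilization. You have simply filled in that analogy in the intended way, including the correct roles of Perelman and Laudenbach--Po{\'e}naru in making the realization map well-defined.
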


\begin{proof}
    The proof proceeds analogously to the discussion
    of the Heegaard splittings of closed 3-manifolds,
    where the role of the Reidemeister-Singer theorem
    is taken on by the uniqueness of trisections up to
    stabilization from
    \cite{gay2016trisecting}.
\end{proof}

\begin{remark} \label{rem:algorithmic}
Note that we can algorithmically determine if the pushout of such a pair $\phi_i$ and $\phi_j$ is in fact a free group.  Namely, we can construct the corresponding 3-manifold and algorithmically check if it is a (possibly empty) connected sum of copies of $S^1 \times S^2$ (see for example \cite{kuperberg2019algorithmic}).  We are unaware if there is a more direct algebraic method to verify this condition. 
\end{remark}

\subsection{Bridge trisected surfaces in 4-manifolds}
\label{sec:links_4-manifolds}

Finally we turn to the setting of knotted surfaces in $4$-manifolds. For us, a \emph{knotted surface} is a closed surface smoothly embedded in a smooth, closed, connected, oriented 4-manifold. 
In particular, our surfaces are not necessarily orientable or connected.

In 2018 Meier and Zupan showed that knotted surfaces in trisected $4$-manifolds can always be isotoped into a compatibly trisected surface. This inherited decomposition is called a \emph{bridge trisection}.

\begin{definition}[Bridge trisection of a knotted surface \cite{meier2017bridgeS4,meier2018bridge4manifolds}] \label{def:bridge}
A \emph{$(g;k_1,k_2,k_3;b;c_1,c_2,c_3)$-bridge trisection} of a knotted surface $S$ in a $4$-manifold $X^4$ is a decomposition $(X^4,S)=(X_1,\mathcal{D}_1) \cup (X_2,\mathcal{D}_2) \cup (X_3,\mathcal{D}_3)$ with the following properties.
\begin{enumerate}
\item The decomposition $X^4=X_1 \cup X_2 \cup X_3$ is a $(g;k_1,k_2,k_3)$-trisection of $X^4$.
\item Each $\mathcal{D}_i$ is a boundary parallel collection of $c_i$ disks in $X_i$.
\item The $\mathcal{D}_i$'s intersect pairwise in trivial $b$-strand tangles (denoted $T_b$) in the handlebodies which are the pairwise intersections of the $X_i$'s.
\end{enumerate}
Just as before, if $k_1=k_2=k_3 = \vcentcolon k$, we replace these parameters with just one $k$ and call the trisection \textit{balanced}. We assume again that all trisections are balanced (with respect to the $k_i$ parameter; the $c_i$ may be different).
\end{definition}

Pairwise, the tangles $T_b$ form unlinks. A bridge trisection is determined by these three tangles, as there is a unique way to smoothly cap off unlinks in  $\#^{k} (S^1 \times S^2)$ with disks \cite{meier2018bridge4manifolds}. A knotted surface in a trisected $4$-manifold is said to be in \emph{bridge position} if its intersection with the trisection of the $4$-manifold results in a bridge trisection. Knotted surfaces can always be isotoped to be in bridge position, and this is unique up to perturbation (for the proof of this result and the precise definition of pertubation, see \cite{meier2017bridgeS4,meier2018bridge4manifolds,hughes2018isotopies}).

\begin{ttheorem}[\cite{meier2017bridgeS4,meier2018bridge4manifolds,hughes2018isotopies}]
    Any pair of bridge trisections for a smoothly
    embedded surface in a fixed underlying trisection of a
    $4$-manifold become isotopic
    after some number of
    perturbations.
\end{ttheorem}

Throughout this section, let $a_{j}$, $b_{j}$, and $p_{k}$ denote the generators of $\pi_1(\Sigma_g - \{p_1,\ldots,p_{2b} \}, \ast)$, where $a_{j}$, $b_{j}$ are the surface generators (for $j=1,\ldots,g$) and $p_{k}$ are the puncture generators (for $k=1,\ldots,2b$), and let $h_j$, $t_{\ell}$ denote the generators of $F_{g+b}$ (for $\ell=1,\ldots,b$).
Given a bounding homomophism 
    \begin{equation*}
        \phi \colon
        \pi_1(\Sigma_{g} - \{p_1,\ldots,p_{2b} \}, \ast)
        \twoheadrightarrow
        \langle t_{1}, \ldots, t_{b},
        h_{1}, \ldots, h_{g} \rangle
    \end{equation*}
we have an associated homomorphism which we call the \emph{associated closed bounding homomorphism} for $\phi$, denoted by
\begin{equation*}
    \overline{\phi} : \pi_1(\Sigma_g, \ast) \twoheadrightarrow \langle h_{1}, \ldots, h_{g} \rangle,
\end{equation*}
which is given by postcomposing $\phi$ by the map quotienting out all of the $t_{\ell}$ (and again calling the images of the $h_j$ by $h_j$) and then sending all $a_j$ and $b_j$ (now thought of as in $\pi_1(\Sigma_g,\ast)$) to the resulting elements of the free group generated by the $h_j$.  Note that since $\phi$ is a bounding homomorphism, $\overline{\phi}$ is also. 

Let $\Man^{(4,2)}$ denote the set of closed, connected, oriented, smooth 4-manifolds $X$ together with a union of closed (potentially non-orientable or disconnected) surfaces $S \subset X$ modulo orientation-preserving diffeomorphisms preserving the surfaces setwise. Let $\Alg^{(4,2)}$ denote the set of triples $(\phi_1,\phi_2,\phi_3)$ of bounding homomorphisms $\phi_1,\phi_2,\phi_3 \colon \pi_1(\Sigma_g - \{p_1,\ldots,p_{2b} \}, \ast) \twoheadrightarrow F_{g+b}$ such that:
\begin{enumerate}
    \item the pushouts of pairs of the associated closed bounding homomorphisms $\overline{\phi_i}, \overline{\phi_j}$ for $i \neq j$ are all free groups, and 
    \item the pushouts of pairs of the bounding homomorphisms $\phi_i,\phi_j$ are all free groups of rank equal to the sum of the rank of the pushout of $\overline{\phi_i}$ and $\overline{\phi_j}$ plus the number of components of $(\phi_1, \phi_2)$ as in \autoref{rem:components}. 
\end{enumerate} 
In other words, an element in $\Alg^{(4,2)}$ is a \textit{group trisection} of a knotted surface group, which could also be described with the following commutative diagram, where every face is a pushout and every homomorphism is surjective (analogous to that in \autoref{sec:intro}).
\begin{center}
\begin{tikzcd}[row sep=scriptsize, column sep=scriptsize]
        & 
        \pi_1(H_g - T_{b},\ast)
        \arrow[rr, twoheadrightarrow] \arrow[dd, twoheadrightarrow] & &
        \pi_{1}(X_{1} - \mathcal{D}_{1},\ast)
        \arrow[dd, twoheadrightarrow] \\
        \pi_1(\Sigma_g - \{2b \text{ pts}\},\ast)
        \arrow[ur, twoheadrightarrow] \arrow[rr, crossing over, twoheadrightarrow] \arrow[dd, twoheadrightarrow]
        & &
        \pi_1(H_g - T_{b},\ast)
        \arrow[ur, twoheadrightarrow]
        \\
        & 
        \pi_{1}(X_{3} - \mathcal{D}_{3},\ast)
        \arrow[rr, twoheadrightarrow] & &
        \pi_{1}(X^{4} - S,\ast) \\
        \pi_1(H_g - T_{b},\ast)
        \arrow[rr, twoheadrightarrow] \arrow[ur, twoheadrightarrow]
        & &
        \pi_{1}(X_{2} - \mathcal{D}_{2},\ast)
        \arrow[from=uu, crossing over, twoheadrightarrow] \arrow[ur, twoheadrightarrow] 
\end{tikzcd}
\end{center}

We will need the following lemma in the construction of the map $(X, S) \colon \Alg^{(4,2)} \to \Man^{(4,2)}$.

\begin{lemma}[The free group characterizes unlinks]
    \label{lem:fundamental_group_detects_unlink}
    The fundamental group detects the unlink in
    the connected sum $\#^k (\sphere{1} \times \sphere{2})$.
    That is, if
    $L = L_{1} \sqcup \ldots \sqcup L_{n}
    \hookrightarrow
    \#^k (\sphere{1} \times \sphere{2})$
    is an $n$-component link with
    $\pi_{1}(\sphere{3} - L,\ast) \cong F_{n+k}$ a free
    group on $n+k$ generators,
    then $L$ is the $n$-component unlink.
\end{lemma}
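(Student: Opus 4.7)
The plan is to proceed by induction on $k$. The base case $k = 0$ is the classical statement that a link in $\sphere{3}$ whose complement has free fundamental group is the $n$-component unlink: for $n = 1$ this is an easy consequence of the Loop Theorem / Dehn's Lemma, and for general $n$ it follows from Gabai's theorem on the Thurston norm of links (together with the fact that a link in $\sphere{3}$ whose components bound disjoint disks is the unlink).

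For the inductive step $k \geq 1$, let $M = \#^{k}(\sphere{1} \times \sphere{2})$ and $X = M - \nu(L)$. The strategy is to find a non-separating essential $2$-sphere in $X$, surger $M$ along it to obtain a link $L \subset M' = \#^{k-1}(\sphere{1} \times \sphere{2})$, and apply the inductive hypothesis. To produce the sphere, I would pick a primitive class $\phi \in H^{1}(M;\ZZ) \cong \ZZ^{k}$. The meridians of $L$ are trivial in $H_{1}(M)$, so $\phi|_{X}$ vanishes on all meridional boundary components of $\partial X$. By Poincar\'e-Lefschetz duality, $\phi|_{X}$ is dual to a closed properly embedded orientable surface $\Sigma \subset X$, and by repeated compressions via the Loop Theorem we may assume $\Sigma$ is incompressible in $X$.

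Since $\pi_{1}(X) \cong F_{n+k}$ is free, the injection $\pi_{1}(\Sigma_{i}) \hookrightarrow F_{n+k}$ coming from incompressibility of each component $\Sigma_{i}$, combined with the Nielsen-Schreier theorem, forces $\pi_{1}(\Sigma_{i})$ to be free. The only closed orientable surface with free fundamental group is $\sphere{2}$, so $\Sigma$ is a disjoint union of $2$-spheres. As the image of $[\Sigma]$ in $H_{2}(M)$ is Poincar\'e dual to $\phi \ne 0$, at least one component $S$ is non-separating in $M$. Surgering $M$ along $S$ gives $M' = \#^{k-1}(\sphere{1} \times \sphere{2})$ containing $L$ disjoint from the surgery locus, and a Van Kampen computation yields $\pi_{1}(M' - L) \cong F_{n+k-1}$. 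By induction, $L$ is the $n$-component unlink in $M'$, and since the surgery can be reversed away from $L$, $L$ is the $n$-component unlink in $M$ as well.

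The main obstacle is producing the closed incompressible orientable surface $\Sigma$. The crucial ingredient is that $\phi$ is pulled back from $H^{1}(M)$, so $\phi|_{X}$ kills all meridians, allowing $\Sigma$ to be taken closed rather than merely properly embedded with toroidal boundary; without this, incompressible surfaces with boundary (which have free $\pi_{1}$ automatically and need not be disks) could appear, and the argument via Nielsen-Schreier would fail to force spherical components. A secondary technical point is verifying that the surgery along $S$ really reduces $\pi_{1}$ of the complement by exactly one free factor, which is a direct Van Kampen calculation provided $S$ is chosen dual to a free generator corresponding to one of the $\sphere{1}$-factors of $M$.
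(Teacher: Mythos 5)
Your strategy---induction on $k$, producing a non-separating essential sphere in the exterior, surgering it away, and reducing to the classical $S^3$ case---is genuinely different from the paper's proof, which generalizes Hillman's argument directly and without induction: the peripheral subgroup of each component is abelian, hence cyclic in the free group $\pi_1(X)$, which forces the longitude to be nullhomotopic, and the Loop Theorem then produces disjoint embedded disks exhibiting the unlink. That said, your proof as written has a genuine gap at exactly the step you single out as crucial.

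You claim that because $\phi \in H^1(M)$ kills the meridians, the Poincar\'e--Lefschetz dual of $\phi|_X$ may be taken to be a \emph{closed} surface. Vanishing on meridians is not sufficient. The class in $H_2(X,\partial X)$ dual to $\phi|_X$ lifts to $H_2(X)$ only if $\phi$ restricts to zero on all of $H^1(\partial X)$, and on the torus $T_i = \partial\nu(L_i)$ one has $\phi|_{T_i} = \phi(m_i)\,m_i^{*} + \phi(\ell_i)\,\ell_i^{*}$ with $\phi(\ell_i) = \phi([L_i])$. If some $L_i$ is homologically essential in $M$, the dual surface necessarily has boundary, namely $\phi([L_i])$ meridional curves on $T_i$; the core of an $S^1 \times S^2$ summand, whose dual surface is a meridian disk of the complementary solid torus, shows your claim is false without further input. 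What rescues the argument is that the hypothesis $\pi_1(X) \cong F_{n+k}$ \emph{does} force $[L_i]=0$ in $H_1(M)$, but this requires a separate argument you have not supplied. For example: the rank count shows that $H_2(M,X)\cong\ZZ^{n}\to H_1(X)\cong\ZZ^{n+k}$ is injective (its image is the rank-$n$ kernel of $H_1(X)\twoheadrightarrow H_1(M)\cong\ZZ^{k}$), so by exactness $H_2(M)\to H_2(M,X)$ vanishes, i.e.\ every class in $H_2(M)$ has zero intersection number with every $L_i$; unimodularity of the intersection pairing on the closed oriented $3$-manifold $M$ then gives $[L_i]=0$. Without some such argument your incompressible surface may have boundary, its components then have free fundamental group without being spheres, and---as you yourself observe---the Nielsen--Schreier step collapses. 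Two smaller points: invoking Gabai for the base case is much heavier than necessary (the peripheral-subgroup argument above handles $S^3$ directly), and ``the surgery can be reversed away from $L$'' conceals an innermost-disk argument needed to push the unlink's trivializing disks off the capping balls; both are standard but should be acknowledged.
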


\begin{proof}
    This proof is a generalization of the
    argument in \cite[Thm.~1]{hillman1981alexander}
    for unlinks in $S^3$.
    Any abelian subgroup of a free group is cyclic,
    so the meridian-longitude generators of the torus around a link component
    span an abelian group in the free
    $\pi_{1}(\sphere{1} \times \sphere{2} - L,\ast)$.
    Since the meridians generate the first homology of a link complement,
    we know that the longitudes have to be nullhomotopic in the link exterior.
    Now apply the loop theorem to obtain disjointly embedded disks
    recognizing the split unlink.
\end{proof}

\begin{remark}
There exist nontrivial links
$L \subset \#^k (\sphere{1} \times \sphere{2})$
whose complement has free fundamental group (but not free of the same rank as the group of the unlink).
For example, the core curve of one of the solid tori in the standard
genus 1 Heegaard splitting of $\sphere{1} \times \sphere{2}$ has complement
homotopy equivalent to the other solid torus; that is, the fundamental group of its complement is free on one generator.
On the other hand, the fundamental group of the complement of the unknot in $\sphere{1} \times \sphere{2}$ is free on two generators (the meridian of the unknot and
the generator of $\pi_1(\sphere{1} \times \sphere{2},\ast)$).
\end{remark}

\begin{remark}
    An analogous statement to
    \autoref{lem:fundamental_group_detects_unlink}
    is false for higher dimensional links.
    Cochran \cite{cochran1983ribbon}
    exhibited links of 2-spheres in 4-spheres whose groups are free
    (but not free on their meridians).
\end{remark}

Now we define the map $(X, S) \colon \Alg^{(4,2)} \to \Man^{(4,2)}$ as follows. Given $(\phi_1,\phi_2,\phi_3) \in \Alg^{(4,2)}$, let $H(\phi_1), H(\phi_2),$ and $H(\phi_3)$ be the handlebodies bounding $\Sigma_g$ that result from the application of \autoref{thm:main}, and further let $T(\phi_i) \subset H(\phi_i)$ for $i = 1,2,3$ be the resulting trivial tangles in these handlebodies.
We glue the handlebodies $H(\phi_1)$, $H(\phi_2)$, and $H(\phi_3)$ together along the common surface $\Sigma_g$ and obtain a closed, oriented, smooth 4-manifold $X = X(\overline{\phi_1}, \overline{\phi_2}, \overline{\phi_3})$ as in \autoref{sec:closed_4-manifolds}. (Here we have used the condition that the pairwise pushouts of the associated closed bounding homomorphisms are free groups to ensure that we obtain $\#^k (S^1 \times S^2)$ as the result of gluing two of the handlebodies together, and thus we can cap off with 4-dimensional 1-handlebodies.) The surface $S$ in $X$ is obtained by considering the unions of the tangles $T(\phi_i) \cup_{\{p_1, \ldots, p_{2b}\}} T(\phi_j) \subset H(\phi_i) \cup_{\Sigma_g} H(\phi_j)$ for $i \neq j$. Since the pushout of $\phi_i$ and $\phi_j$ is a free group of the appropriate rank, by \autoref{lem:fundamental_group_detects_unlink} we know that the unions of these tangles are all unlinks. Therefore, we can take disjoint bridge disks bounding $T(\phi_i) \cup_{\{p_1,\ldots,p_{2b}\}} T(\phi_j)$ in $H(\phi_i) \cup_{\Sigma_g} H(\phi_j)$ and push these into the 4-dimensional 1-handlebody bounding $H(\phi_i) \cup_{\Sigma_g} H(\phi_j)$ in $X$. Then the union of these three sets of disks (with one set in each 4-dimensional 1-handlebody) is the knotted surface $S$.  

We now define the analogues of the various relations from \autoref{sec:links_3-manifolds} and \autoref{sec:closed_4-manifolds} in this setting. The definitions of $\sim_h$ and $\sim_m$ are exactly analogous to the definitions in \autoref{sec:links_3-manifolds}. Just as stabilization appeared in \autoref{sec:links_3-manifolds} as several different relations -- namely, $\sim_{s_g}$ for changing the genus of the surface and $\sim_{s_b^1}, \sim_{s_b^2}$ for perturbing each of the two tangles -- in our current setting, stabilization will also manifest as several different relations.  Here we will have $\sim_{s_g}$ corresponding to changing the genus of the surface, and $\sim_{s_b^1}$, $\sim_{s_b^2}$, and $\sim_{s_b^3}$  corresponding to changing the number of strands in the tangles.

Given $(\phi_1,\phi_2, \phi_3), (\phi_1', \phi_2', \phi_3') \in \Alg^{(4,2)}$, we now define $(\phi_1, \phi_2, \phi_3) \sim_{s_g} (\phi_1', \phi_2', \phi_3')$.  We note that $\phi_i'$ will be a map from $\pi_1(\Sigma_g - \{p_1,\ldots,p_{2b} \},\ast) \twoheadrightarrow F_{g+b}$ while $\phi_i$ will be a map from $\pi_1(\Sigma_{g+1}-\{p_1,\ldots,p_{2b} \},\ast) \twoheadrightarrow F_{g+3+b}$. We say $(\phi_1, \phi_2, \phi_3) \sim_{s_g} (\phi_1', \phi_2', \phi_3')$ if $\phi_i(a_j) = \phi_i'(a_j)$, $\phi_i(b_j) = \phi_i'(b_j)$, and $\phi_i(p_k) = \phi_i'(p_k)$ for $i = 1,2,3$, $j= 1,\ldots,g$, and $k = 1,\ldots,2b$ (where we are identifying $F_{g+b}$ naturally as a subset of $F_{g+3+b}$, identifying the $h_i$ generators in $F_{g+b}$ with $h_i$ in $F_{g+3+b}$ and similarly with $t_i$), and the rest of the generators are mapped just as in the definition of $\sim_s$ in \autoref{sec:closed_4-manifolds}.  

Now we discuss the algebraic version of perturbation in this setting, which is analogous to the definitions of $\sim_{s_b^1}$, $\sim_{s_b^2}$ in \autoref{sec:links_3-manifolds}. Suppose that $b > 0$. This definition is motivated by the pictures of perturbation shown in \autoref{fig:stab2} and \autoref{fig:stab3}, where two arcs of different colors are banded together to create three new arcs, one of each color. There are three such operations to consider depending on how we cyclically permute the colors. For each operation, there are two cases under which the operation can occur: either the two arcs to be banded together share an endpoint, or they do not. 

Let $(\phi_1,\phi_2,\phi_3), (\phi_1', \phi_2',\phi_3') \in \Alg^{(4,2)}$. We note that in both cases $\phi_i'$ will be a map from $\pi_1(\Sigma_g - \{p_1,\ldots,p_{2b} \},\ast) \twoheadrightarrow F_{g+b}$ while $\phi_i$ will be a map from $\pi_1(\Sigma_g-\{p_1,\ldots,p_{2b},p_{2b+1},p_{2b+2}\},\ast) \twoheadrightarrow F_{g+b+1}$. If the arcs to be banded together share an endpoint, then assume without loss of generality that $\phi_1'(p_{2b})=\phi_2'(p_{2b})=t_b$. (We can do this because we first mod out by mapping class group elements; see the proof of \autoref{thm:Alg_3_to_Man_3}, claim 1, for details.) We write $(\phi_1,\phi_2,\phi_3) \sim_{s_b^1} (\phi_1', \phi_2',\phi_3')$ if 
  \begin{align*}
     \phi_1(p_{2b}) &= t_{b+1}\\ 
     \phi_1(p_{2b+1}) &= (t_{b+1})^{-1} \\
     \phi_1(p_{2b+2}) &=  t_b \\
     \phi_2(p_{2b}) &= t_{b+1} \\ 
     \phi_2(p_{2b+1}) &= (t_{b+1})^{-1} \\ 
     \phi_2(p_{2b+2}) &=  t_b \\ 
     \phi_3(p_{2b+1}) &= t_{b+1}\\
     \phi_3(p_{2b+2}) &= (t_{b+1})^{-1}
 \end{align*}
and $\phi_i$ and $\phi_i'$ agree for all other elements in the generating sets (suitably identifying the groups) for $i = 1,2,3$. See \autoref{fig:stab2}. We similarly define $\sim_{s_b^2}$ and $\sim_{s_b^3}$ in this case by swapping the roles of the indices.

If the arcs to be banded together do not share an endpoint, then assume without loss of generality that $\phi_1'(p_{2b-1})=t_b$ and $\phi_2'(p_{2b})=t_b$. We write $(\phi_1,\phi_2,\phi_3) \sim_{s_b^1} (\phi_1', \phi_2',\phi_3')$ if 
  \begin{align*}
     \phi_1(p_{2b-1}) &= t_{b+1}\\ 
     \phi_1(p_{2b+1}) &= (t_{b+1})^{-1} \\
     \phi_1(p_{2b+2}) &=  t_b \\
     \phi_2(p_{2b}) &= t_{b+1} \\ 
     \phi_2(p_{2b+1}) &= (t_{b+1})^{-1} \\ 
     \phi_2(p_{2b+2}) &=  t_b \\ 
     \phi_3(p_{2b+1}) &= t_{b+1}\\
     \phi_3(p_{2b+2}) &= (t_{b+1})^{-1}
 \end{align*}
and $\phi_i$ and $\phi_i'$ agree for all other elements in the generating sets (suitably identifying the groups) for $i = 1,2,3$. See \autoref{fig:stab3}. We similarly define $\sim_{s_b^2}$ and $\sim_{s_b^3}$ in this case by swapping the roles of the indices.

\begin{figure}[h]
    \centering
    \includegraphics[width=.9\linewidth]{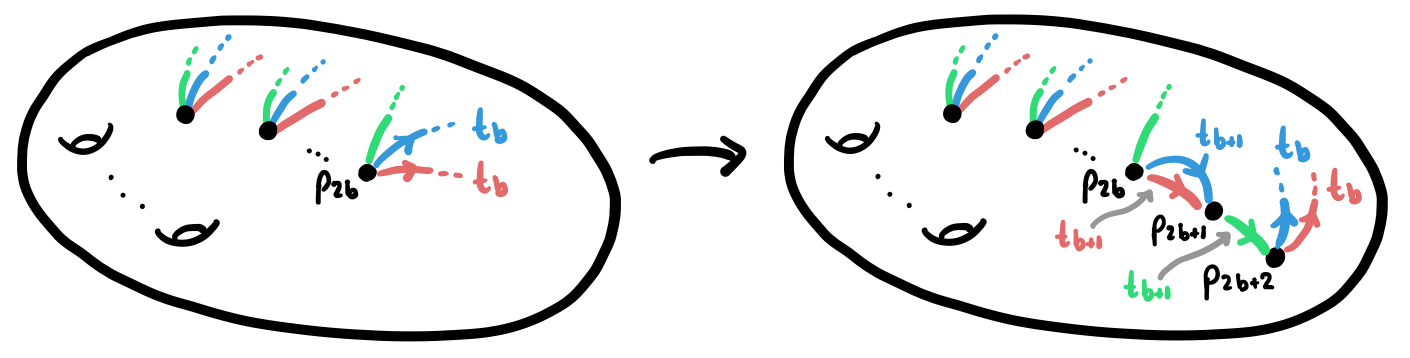}
    \caption{Performing a perturbation, where two arcs of different colors with the same endpoint are banded together. This creates three new tangle strands, one of each color, and increases the number of punctures by two. If $\phi_1$ corresponds to pink, $\phi_2$ corresponds to blue, and $\phi_3$ corresponds to green, then this is a picture of the $\sim_{s_b^1}$ version of perturbation. Cyclically permuting the colors gives a picture for the $\sim_{s_b^2}$ and  $\sim_{s_b^3}$ version. 
    \label{fig:stab2}}
\end{figure}

\begin{figure}[h]
    \centering
    \includegraphics[width=.9\linewidth]{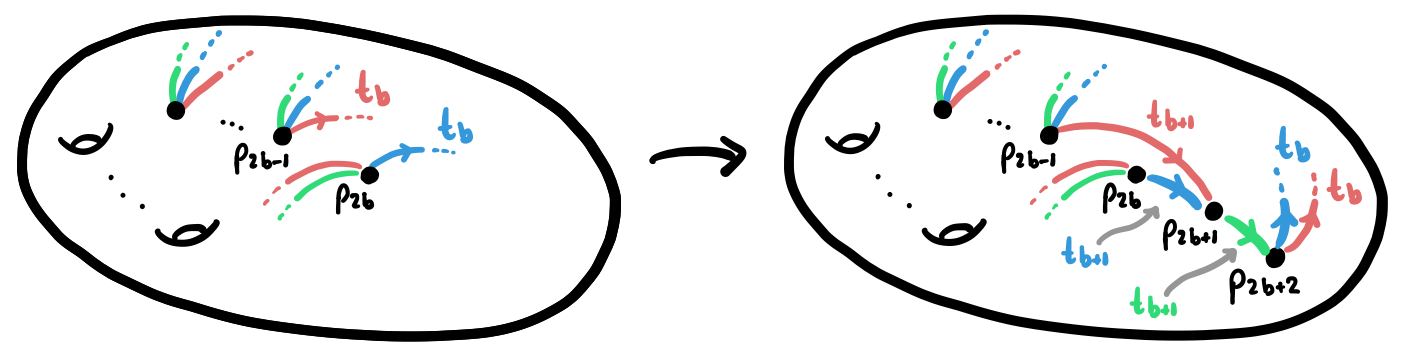}
    \caption{Performing a perturbation, where two arcs of different colors with different endpoints are banded together. This creates three new tangle strands, one of each color, and increases the number of punctures by two. If $\phi_1$ corresponds to pink, $\phi_2$ corresponds to blue, and $\phi_3$ corresponds to green, then this is a picture of the $\sim_{s_b^1}$ version of perturbation. Cyclically permuting the colors gives a picture for the $\sim_{s_b^2}$ and  $\sim_{s_b^3}$ version. 
    \label{fig:stab3}}
\end{figure}

Finally, we must again include a relation corresponding to cyclically permuting the ``colors'' of the curves and arcs on a trisection diagram (that is, cyclically permuting the roles of the three curve-and-arc systems). We say  $(\phi_1, \phi_2, \phi_3) \sim_c (\phi_1', \phi_2', \phi_3')$ if $\phi_1=\phi'_2$, $\phi_2=\phi'_3$ , and $\phi_3=\phi'_1$. Then let $\sim$ denote the equivalence relation on $\Alg^{(4,2)}$ generated by $\sim_h, \sim_m, \sim_{s_g}, \sim_{s_b^1}, \sim_{s_b^2},$ $\sim_{s_b^3}$, and $\sim_c$.  

\begin{theorem}
    \label{thm:Alg_4_2_to_Man_4_2}
    The map
    $(X, S) \colon \Alg^{(4,2)} 
    \to \Man^{(4,2)}$
    descends to $\Alg^{(4,2)}/ \sim$
    and the resulting map is a bijection.  
\end{theorem}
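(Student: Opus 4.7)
The plan is to mirror the proof strategy of \autoref{thm:Alg_3_to_Man_3}, \autoref{thm:Alg_3_1_to_Man_3_1}, and \autoref{thm:Alg_4_to_Man_4}, factoring the map $(X,S)$ through an intermediate set of bridge trisection diagrams. Specifically, let $\Diag^{(4,2)}$ denote the set of tuples $(\Sigma_g, \alpha, \beta, \gamma, S_\alpha, S_\beta, S_\gamma)$ where $(\alpha, S_\alpha)$, $(\beta, S_\beta)$, $(\gamma, S_\gamma)$ are three curve-and-arc systems on the $2b$-punctured surface $\Sigma_g$ such that each pair $(\alpha, \beta)$, $(\beta, \gamma)$, $(\gamma, \alpha)$ forms a bridge splitting diagram of an unlink in some $\#^{k_{ij}}(S^1 \times S^2)$. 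The realization map $R \colon \Diag^{(4,2)} \to \Man^{(4,2)}$ is defined exactly as in \autoref{def:bridge}: glue the three handlebodies built from the cut systems along $\Sigma_g$, push the shadow arcs into the resulting $3$-manifolds to get unlinks (using \autoref{lem:fundamental_group_detects_unlink}), cap off with $4$-dimensional $1$-handlebodies (uniquely by \cite{laudenbach1972note}), and cap off the tangle unions with disks uniquely up to isotopy by \cite{meier2018bridge4manifolds}. The diagram construction map $D \colon \Alg^{(4,2)} \to \Diag^{(4,2)}$ comes from applying \autoref{thm:main} to each $\phi_i$ separately.

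First I would descend by $\sim_h$: two curve-and-arc systems in a handlebody determine isotopic trivial tangles iff they differ by handleslides \cite{meier2018bridge4manifolds, meier2020filling}, so via \autoref{lem:uniqueness} the map $D_1 \colon \Alg^{(4,2)}/\sim_h \to \Diag^{(4,2)}/\sim_h$ is a well-defined bijection, with inverse constructed exactly as in \autoref{thm:Alg_3_to_Man_3} via reading off three bounding homomorphisms from a diagram. Second, I would descend by $\sim_m$ using the Dehn--Nielsen--Baer theorem on the punctured surface (noting that mapping classes must preserve the puncture set, which corresponds precisely to preserving conjugacy classes of the $p_i$ setwise, hence the formulation of orientation-preserving $m$ in \autoref{sec:links_3-manifolds}). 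Third, I would descend by $\sim_c$, which is immediate since cyclically permuting the roles of the three colors on the diagrammatic side is literally the same operation as cyclically permuting the $\phi_i$. Fourth, I would descend by the stabilizations $\sim_{s_g}$ and $\sim_{s_b^1}, \sim_{s_b^2}, \sim_{s_b^3}$, each of which has been set up explicitly (without choices, in the spirit of the final claim in the proof of \autoref{thm:Alg_3_to_Man_3}) to algebraically encode exactly the diagrammatic stabilization/perturbation moves shown in \autoref{fig:stab2} and \autoref{fig:stab3}.

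For the surjectivity of $R$ onto $\Man^{(4,2)}$, I would invoke the existence of bridge trisections: every smoothly embedded closed surface in a smooth, closed, connected, oriented $4$-manifold can be isotoped into bridge position \cite{meier2017bridgeS4, meier2018bridge4manifolds}. For injectivity modulo the diagrammatic versions of our relations, I would use our topological input theorem (uniqueness of bridge trisections up to perturbation) combined with the uniqueness up to stabilization of the ambient trisection from \cite{gay2016trisecting}, so that any two diagrams realizing the same $(X,S)$ are connected by a finite sequence of handleslides, diffeomorphisms of the punctured surface, genus stabilizations of the ambient trisection, perturbations of the bridge trisection, and cyclic relabelings.

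The main obstacle will be the careful verification that the explicit algebraic stabilizations $\sim_{s_b^i}$ are in bijective correspondence with the geometric perturbation moves, in particular handling the two sub-cases (whether or not the two arcs being perturbed share an endpoint) and checking that after first modding out by $\sim_h$ and $\sim_m$ one may always arrange the normalization $\phi_1'(p_{2b}) = \phi_2'(p_{2b}) = t_b$ (respectively $\phi_1'(p_{2b-1}) = t_b$, $\phi_2'(p_{2b}) = t_b$) without loss of generality, just as in the proof of \autoref{thm:Alg_3_1_to_Man_3_1}. The other technical point is verifying at each quotient stage that the relevant section $\sigma$ from diagrams to algebra descends through each successive quotient; this follows the template of \autoref{thm:Alg_3_to_Man_3} but must be checked once for each of the three colors and each type of stabilization. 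With these checks in place, the composition of the bijections $D_1, D_2, \ldots$ with the bijection induced by $R$ on the fully quotiented diagram set yields the desired bijection $\Alg^{(4,2)}/\sim \;\xrightarrow{\cong}\; \Man^{(4,2)}$.
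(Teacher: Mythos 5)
Your proposal is correct and follows essentially the same route as the paper: factoring $(X,S)$ through the intermediate set $\Diag^{(4,2)}$ of bridge trisection diagrams, descending successively by the diagrammatic counterparts of $\sim_h$, $\sim_m$, $\sim_c$, and the stabilization/perturbation relations, and then invoking existence of bridge trisections for surjectivity and the two uniqueness theorems (bridge trisections up to perturbation, ambient trisections up to stabilization) for injectivity. In fact your write-up is more explicit than the paper's own proof, which at this stage simply defers to the template established in \autoref{thm:Alg_3_to_Man_3}, \autoref{thm:Alg_3_1_to_Man_3_1}, and \autoref{thm:Alg_4_to_Man_4}; the technical checkpoints you flag (matching the algebraic $\sim_{s_b^i}$ to the two geometric perturbation sub-cases, and verifying that the section $\sigma$ descends through each quotient) are precisely the details the paper leaves to the reader.
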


\begin{proof}
As in the proofs of \autoref{thm:Alg_3_to_Man_3}, \autoref{thm:Alg_3_1_to_Man_3_1}, and \autoref{thm:Alg_4_to_Man_4}, we consider an intermediate set $\texttt{Diag}^{(4,2)}$ whose elements are \emph{trisection diagrams}, that is, tuples $(\Sigma_g, \alpha, \beta, \gamma, S_{\alpha}, S_{\beta}, S_{\gamma})$
where $\alpha,\beta,\gamma$ are cut systems on $\Sigma_g$ and $S_{\alpha}, S_{\beta}, S_{\gamma}$ are shadow diagrams for trivial tangles with endpoints $\{p_1, \ldots, p_{2b}\}$ (which are all only considered up to isotopy). In other words, $(\alpha, S_{\alpha})$ is a curve-and-arc system for one tangle and handlebody, and similarly for the other two pairs.
(Refer to the beginning of \autoref{sec:setup} for the definition of \textit{curve-and-arc system}.) Then the map $(X,S)$ factors as shown below.
    \begin{equation*}
    \begin{tikzcd}
        \Alg^{(4,2)}
        \arrow[rr, "{(X,S)}"] \arrow[dr, "D"] 
        & & \Man^{(4,2)}  \\
        & \texttt{Diag}^{(4,2)} \arrow[ur, "R"] &
    \end{tikzcd}
    \end{equation*}
The map $R \colon \Diag^{(4,2)} \to \Man^{(4,2)}$ is the topological realization of a diagram $(\Sigma_g, \alpha, \beta, \gamma, S_{\alpha}, S_{\beta}, S_{\gamma})$, where we cross $\Sigma_g$ with a disk, glue disks on the respective sides to $\alpha$, $\beta$, and $\gamma$, glue 3-balls to the resulting sphere boundary components to obtain three handlebodies, and then push the interiors of the shadow arcs in $S_{\alpha}$, $S_{\beta}$, and $S_{\gamma}$ into their respective handlebody to obtain three tangles which are pairwise unlinks. Since the pairwise unions of the handlebodies are diffeomorphic to $\#^k(S^1 \times S^2)$, we can fill these in uniquely with 4-dimensional 1-handlebodies. Then cap off the unlinks with disks in the pairwise unions of the handlebodies, and then push these disks into the 4-dimensional 1-handlebodies bounded by these unions to create a knotted surface.
The map $D \colon \texttt{Alg}^{(4,2)} \to \texttt{Diag}^{(4,2)}$ is the construction of $(X,S)$
using \autoref{thm:main}, but where we stop at just a diagram with curves $\alpha$ and arcs $S_{\alpha}$ corresponding to $\phi_1$, curves $\beta$ and arcs $S_{\beta}$ corresponding to $\phi_2$, and curves $\gamma$ and arcs $S_{\gamma}$ corresponding to $\phi_3$.

Our topological input theorem can now be translated into the following diagrammatic statement: two diagrams of the same knotted surface in a $4$-manifold are related by a sequence of isotopies, slides, perturbations, and depurtubations, as in the proof of \autoref{thm:Alg_3_1_to_Man_3_1}, and additionally, cyclically permuting the colors. In other words, these are diagrammatic equivalence relations which have the algebraic counterparts $\sim_h, \sim_m, \sim_{s_g}, \sim_{s_b^1}, \sim_{s_b^2},$ $\sim_{s_b^3}$, and $\sim_c$ as described above.

The rest of the proof then follows in similar fashion as before; mod out the map $D$ by these diagrammatic equivalence relations, and show each time that the map factors through and a bijection between quotients is achieved. Then by our topological input theorem from this section, along with that from \autoref{sec:closed_4-manifolds}, we achieve the result.
\end{proof}

One consequence of \autoref{thm:Alg_4_2_to_Man_4_2} is the following corollary. By the Gordon-Luecke theorem, (one-dimensional) knots in $S^3$ are determined by the oriented homeomorphism type of their complements \cite{gordonluecke1989}, but the same is not true for knotted surfaces in $S^4$; see for instance \cite{gordon1976knots,suciu1985ribbon,kaneobu1994peripheral}. However, the extra information contained in a group trisection \emph{is} enough to distinguish knotted surfaces.

\begin{corollary} \label{cor:distinguish}
Although smoothly knotted surfaces in the $4$-sphere cannot always be distinguished by their fundamental groups (or even their complements), they \emph{can} always be distinguished by the group trisections of their fundamental groups.
\end{corollary}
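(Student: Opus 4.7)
The plan is to deduce the corollary as a direct consequence of \autoref{thm:Alg_4_2_to_Man_4_2}. The first clause of the statement, that the fundamental group of the complement does not suffice to distinguish smoothly knotted surfaces in $\sphere{4}$, is a well-known topological fact for which we can simply cite \cite{gordon1976knots,suciu1985ribbon,kaneobu1994peripheral}; no argument is needed here beyond noting that these references exhibit pairs of knotted surfaces $S_1, S_2 \hookrightarrow \sphere{4}$ which are not equivalent as pairs but whose complements are homeomorphic (and in particular have isomorphic fundamental groups).

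For the second clause, I would invoke \autoref{thm:Alg_4_2_to_Man_4_2} as follows. That theorem produces a bijection
\begin{equation*}
    (X,S) \colon \Alg^{(4,2)}/\!\sim \; \xrightarrow{\cong} \; \Man^{(4,2)}.
\end{equation*}
Restricting the codomain to the subset of pairs whose ambient 4-manifold is diffeomorphic to $\sphere{4}$ (equivalently, by \cite{abrams2018group} together with the smooth $4$-dimensional Poincar\'{e} conjecture status aside, restricting the domain to those triples $(\phi_1,\phi_2,\phi_3)$ whose associated closed triple pushes out to the trivial group), the bijection still holds. Since any two knotted surfaces $(\sphere{4}, S_1)$ and $(\sphere{4}, S_2)$ which are inequivalent as pairs represent distinct elements of $\Man^{(4,2)}$, the injectivity half of \autoref{thm:Alg_4_2_to_Man_4_2} forces their preimages in $\Alg^{(4,2)}/\!\sim$ to be distinct; that is, no sequence of the moves $\sim_h, \sim_m, \sim_{s_g}, \sim_{s_b^1}, \sim_{s_b^2}, \sim_{s_b^3}, \sim_c$ transforms a group trisection of $(\sphere{4}, S_1)$ into one for $(\sphere{4}, S_2)$.

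I anticipate no serious obstacle: the corollary is essentially a restatement of the injectivity of $(X,S)$ specialized to the class of examples from the cited literature. The only mild subtlety to address is one of formulation, namely clarifying what ``distinguished by group trisections'' means; I would state it as the assertion that the equivalence classes in $\Alg^{(4,2)}/\!\sim$ associated to $(\sphere{4}, S_1)$ and $(\sphere{4}, S_2)$ are distinct whenever the pairs themselves are inequivalent. A remark worth including is that this is strictly stronger than distinction by the fundamental group of the complement, since the latter is recovered from the group trisection as the iterated pushout of $(\phi_1,\phi_2,\phi_3)$, but the group trisection itself retains the extra data of the three bounding homomorphisms up to the algebraic stabilization, handleslide, mapping class, and color-permutation moves.
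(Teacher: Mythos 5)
Your proposal is correct and matches the paper's own (essentially one-line) justification: the first clause is the cited topological fact that complements fail to distinguish knotted surfaces in $\sphere{4}$, and the second clause is exactly the injectivity of the bijection $(X,S)\colon \Alg^{(4,2)}/\!\sim\; \to \Man^{(4,2)}$ from \autoref{thm:Alg_4_2_to_Man_4_2}, restricted to pairs with ambient $\sphere{4}$. Your clarification of what ``distinguished by group trisections'' means is a reasonable and accurate gloss on the paper's intent.
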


\section{Examples and consequences}
\label{sec:examples}

In this section we discuss some examples and consequences of our results, including examples of non-equivalent group trisections of the same group, an algebraic version of the smooth unknotting conjecture, a group-theoretic characterization of knot groups, and musings on algorithmic decidability. We will frequently make use of the following definition.

\begin{definition}[Fundamental group of pairs or triples of maps] \label{def:fundamental} 
The \textit{fundamental group} of $(\phi_1,\phi_2)$ (in $\Alg^{3}$ or $\Alg^{(3,1)}$) is their pushout. Similarly, the \textit{fundamental group} of $(\phi_1,\phi_2,\phi_3)$ (in $\Alg^{4}$ or $\Alg^{(4,2)}$) is the group that results from pushing out the three homomorphisms. 
\end{definition}

Alternatively, the fundamental group of $(\phi_1,\phi_2)$ or $(\phi_1,\phi_2,\phi_3)$ is the fundamental group of the corresponding topological space constructed from these maps.

\subsection{Non-equivalent group trisections of the same group}

Here we present a few examples of non-equivalent group trisections of the same group. Specifically, we provide non-equivalent triples $(\phi_1,\phi_2,\phi_3)$ in $\Alg^{(4,2)}/\sim$ with the same fundamental group, where the non-equivalence follows from known topological results about the knotted surfaces realizing these groups as their fundamental group. We leave many details of the calculations to the reader, but see \cite{blackwell2022combinatorial} and \cite{ruppik2022casson} for a more thorough treatment of the following examples, as well as \cite[Sec.~4.1]{joseph2022bridge} for a description of how to calculate fundamental groups from tri-plane diagrams. 

Note that in the case of closed $4$-manifolds, there are as many (non-equivalent) group trisections of the trivial group as there are exotic smooth structures on simply connected $4$-manifolds. As a given $4$-manifold with an embedded surface can admit non-equivalent group trisections of the same group, which correspond to different surfaces, the theory of group trisections for ($4$-manifold, knotted surface) pairs is even richer than that for $4$-manifolds alone.

\subsubsection{Unknotted \texorpdfstring{$\mathbb{RP}^2$}{RP2}s in \texorpdfstring{$S^4$}{S4}} \label{sec:unknotted_RP2s}
For our first example of non-equivalent group trisections of knotted surface groups, consider the following.

\begin{corollary}
    There exist two elements of $\Alg^{(4,2)}$ which are not equivalent in the quotient $\Alg^{(4,2)} / \sim$, but both push out to $\ZZ / 2\ZZ$. Under topological realization these elements correspond to unknotted $\mathbb{RP}^2$s in $S^4$.
\end{corollary}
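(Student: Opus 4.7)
The plan is to invoke the bijection from \autoref{thm:Alg_4_2_to_Man_4_2} together with the classical fact (due to Price and Roseman, see also Kawauchi) that up to orientation-preserving diffeomorphism of $S^4$ there are exactly two smoothly embedded unknotted projective planes, distinguished by their normal Euler number $\pm 2$. Both have complement with fundamental group isomorphic to $\ZZ/2\ZZ$, generated by a meridian.

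First I would exhibit bridge trisections for each of the two unknotted $\RP{2}$s in $S^{4}$, e.g.\ the $(0;0;3;1,1,1)$-bridge trisections constructed by Meier--Zupan \cite{meier2018bridge4manifolds}. For each bridge trisection, I would read off a triple of bounding homomorphisms $(\phi_1,\phi_2,\phi_3)$ and $(\phi_1',\phi_2',\phi_3')$ in $\Alg^{(4,2)}$ by applying \autoref{lem:main} to each of the three curve-and-arc systems appearing in the spine of the bridge trisection. Since both bridge trisections realize pairs $(S^4, \RP{2})$, each triple satisfies the two algebraic conditions in the definition of $\Alg^{(4,2)}$ (the pairwise pushouts of the associated closed bounding homomorphisms are free, and those of the bounding homomorphisms themselves are free of the appropriate rank, since the pairwise tangle unions are unlinks).

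Next, from the cube diagram of \autoref{sec:links_4-manifolds} and the van Kampen theorem, the pushout of either triple is isomorphic to $\pi_1(S^{4}\setminus \RP{2},\ast)\cong \ZZ/2\ZZ$, so both triples push out to the same group. On the other hand, by \autoref{thm:Alg_4_2_to_Man_4_2}, two elements of $\Alg^{(4,2)}$ represent the same class in $\Alg^{(4,2)}/\sim$ if and only if their topological realizations are equivalent as pairs of $4$-manifold and knotted surface under orientation-preserving diffeomorphism. Since the two unknotted $\RP{2}$s in $S^{4}$ are not related by an orientation-preserving self-diffeomorphism (the normal Euler number is a diffeomorphism invariant of the embedded surface and takes the two values $\pm 2$), the triples $(\phi_1,\phi_2,\phi_3)$ and $(\phi_1',\phi_2',\phi_3')$ must represent distinct classes in $\Alg^{(4,2)}/\sim$.

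The main obstacle is ensuring that the invariant distinguishing the two surfaces in $S^{4}$ truly descends to the quotient by $\sim$, and in particular that the cyclic permutation relation $\sim_{c}$ does not collapse the two classes. This is automatic because $\sim_{c}$ only permutes which of the three colors plays which role, while preserving the underlying smooth pair $(X^4,S)$; so distinct smooth pairs cannot become identified under $\sim$. Granted this, the corollary follows directly.
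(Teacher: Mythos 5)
Your proposal is correct and follows essentially the same route as the paper: both take the two unknotted $\RP{2}$s in $S^4$ with normal Euler number $\pm 2$ from the Meier--Zupan bridge trisections, read off the triples of bounding homomorphisms, note that both push out to $\ZZ/2\ZZ$, and conclude non-equivalence in $\Alg^{(4,2)}/\sim$ from \autoref{thm:Alg_4_2_to_Man_4_2} together with the known non-isotopy of the two surfaces. The only difference is that the paper writes out the three epimorphisms explicitly for each surface, while you describe the construction in outline.
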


\begin{proof}
As shown in \cite[Fig.~15]{meier2017bridgeS4}, we can represent two unknotted $\mathbb{RP}^2$s in $S^4$, with Euler numbers $\pm 2$, by the tri-plane diagrams in \autoref{fig:triplane1} and \autoref{fig:triplane2}. Both $\mathbb{RP}^2$s produce group trisections of $\ZZ / 2\ZZ$, but by \autoref{thm:Alg_4_2_to_Man_4_2} these group trisections cannot be equivalent as the surfaces are not isotopic. Here we include some details of the construction of these group trisections for the sake of illustration, but see \cite[Sec.~4.2.3]{blackwell2022combinatorial} for a full treatment, including an example of the use of \autoref{thm:main} to recover the $\gamma$ (green) tangle from the group trisection.

\begin{figure}[h]
    \centering
    \includegraphics[width=.6\linewidth]{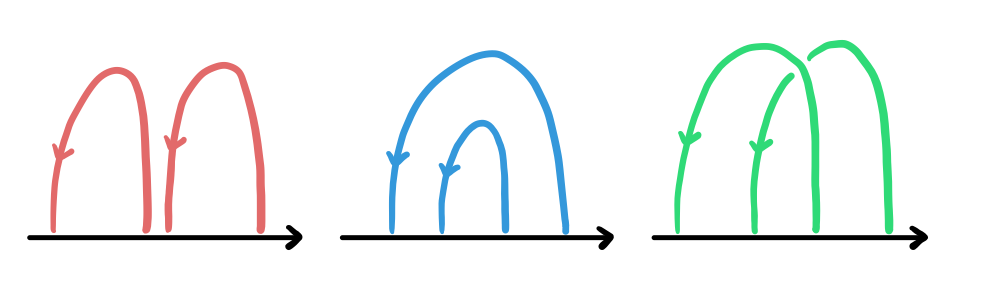}
    \caption{A tri-plane diagram for the unknotted $\mathbb{RP}^2$ in $S^4$ with Euler number~$-2$.
    \label{fig:triplane1}}
\end{figure}

\begin{figure}[h]
    \centering
    \includegraphics[width=.6\linewidth]{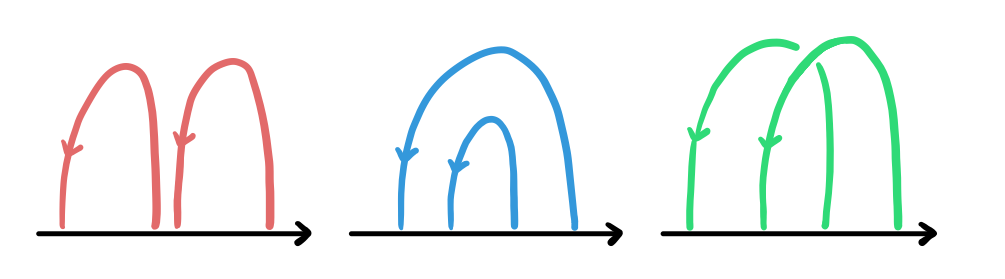}
    \caption{A tri-plane diagram for the unknotted $\mathbb{RP}^2$ in $S^4$ with Euler number~$+2$.
    \label{fig:triplane2}}
\end{figure}

As $\mathbb{RP}^2$ is non-orientable, it is not possible to consistently orient the tri-plane diagrams, but we choose arbitrary orientations for each tangle separately in order to write down the group trisection maps; the choice of orientations here will not matter. For both $\mathbb{RP}^2$s, presentations for the groups making up the initial three epimorphisms of the group trisection are as follows, where (abusing notation) $T_{2}^{\alpha}$, $T_{2}^{\beta}$, and $T_{2}^{\gamma}$ are the trivial tangles as shown in \autoref{fig:triplane1} and \autoref{fig:triplane2}.
\[ 
\begin{array}{rcl}
\pi_1(\Sigma_{0} - \{ p_{1}, \ldots, p_{4} \},\ast) & = & \langle p_1,p_2,p_3,p_4 \mid p_1p_2p_3p_4=1 \rangle \\
\pi_1(H_0 - T_{2}^{\alpha},\ast) & = & \langle x_1, x_2 \rangle \\
\pi_1(H_0 - T_{2}^{\beta},\ast) & = & \langle y_1, y_2 \rangle \\
\pi_1(H_0 - T_{2}^{\gamma},\ast) & = & \langle z_1, z_2 \rangle
\end{array}
\]

Below we write down the initial three maps (in other words, the elements in $\Alg^{(4,2)}$) for the $\mathbb{RP}^2$ with Euler number $-2$, corresponding to the $\alpha$ tangle (left/red), $\beta$ tangle (center/blue), and $\gamma$ tangle (right/green).
\begin{align*}
    p_1 &\mapsto x_1 & p_1 &\mapsto y_1 & p_1 &\mapsto z_1 \\
    p_2 &\mapsto x_1^{-1} & p_2 &\mapsto y_2 & p_2 &\mapsto z_2 \\
    p_3 &\mapsto x_2 & p_3 &\mapsto y_2^{-1} & p_3 &\mapsto z_1^{-1} \\
    p_4 &\mapsto x_2^{-1} & p_4 &\mapsto y_1^{-1} & p_4 &\mapsto z_1z_2^{-1}z_1^{-1}
\end{align*}

Below we write down the initial three maps (in other words, the elements in $\Alg^{(4,2)}$) for the $\mathbb{RP}^2$ with Euler number $+2$, corresponding to the $\alpha$ tangle (left/red), $\beta$ tangle (center/blue), and $\gamma$ tangle (right/green).
\begin{align*}
    p_1 &\mapsto x_1 & p_1 &\mapsto y_1 & p_1 &\mapsto z_1 \\
    p_2 &\mapsto x_1^{-1} & p_2 &\mapsto y_2 & p_2 &\mapsto z_2 \\
    p_3 &\mapsto x_2 & p_3 &\mapsto y_2^{-1} & p_3 &\mapsto z_2z_1^{-1}z_2^{-1} \\
    p_4 &\mapsto x_2^{-1} & p_4 &\mapsto y_1^{-1} & p_4 &\mapsto z_2^{-1}
\end{align*}

The only difference between these maps and the previous is a slight change in the map for the $\gamma$ tangle, corresponding to the crossing change between the two tangles. As the initial maps determine the entire pushout cube, it is sufficient to provide these maps; to recover the other maps in the cube, push out repeatedly until the entire cube is formed.
\end{proof}

\subsubsection{Twist-spun torus knots in \texorpdfstring{$S^4$}{S4}}
\label{sec:twist_spun_torus_knots}

A well-known family of twist-spun torus knots gives the following corollary of a combination of \cite{gordon1973homotopy} and \autoref{thm:Alg_4_2_to_Man_4_2}.

\begin{corollary}
    \label{prop:same_group_different_trisections}
    There exists a collection of three elements of $\Alg^{(4,2)}$ which are not equivalent in the quotient $\Alg^{(4,2)} / \sim$, but push out to the same group. Under topological realization these elements correspond to knotted spheres in $\sphere{4}$ which have isomorphic knotted surface groups. 
\end{corollary}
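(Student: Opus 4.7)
The plan is to pull the required example from the literature and then let \autoref{thm:Alg_4_2_to_Man_4_2} do all of the translation work. The topological input is Gordon's analysis \cite{gordon1973homotopy} of Zeeman's twist-spinning construction, which produces distinct $k$-twist spins of $(p,q)$-torus knots that are knotted $2$-spheres in $S^4$ with pairwise isomorphic knot groups but which are pairwise non-isotopic as smoothly embedded spheres. From such a family, choose three knotted spheres $K_1, K_2, K_3 \subset S^4$ with this property.

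Next I would pick, for each $i$, a bridge trisection of $(S^4, K_i)$; these exist by the Meier--Zupan theorem \cite{meier2017bridgeS4, meier2018bridge4manifolds} recalled in \autoref{sec:links_4-manifolds}. Applying the construction that produces the map $D$ in the proof of \autoref{thm:Alg_4_2_to_Man_4_2} to each such bridge trisection yields three triples $(\phi_1^i, \phi_2^i, \phi_3^i) \in \Alg^{(4,2)}$ whose topological realizations are $(S^4, K_i)$. Because the knotted spheres $K_1, K_2, K_3$ are pairwise non-isotopic, the injectivity direction of the bijection in \autoref{thm:Alg_4_2_to_Man_4_2} forces the three triples to be pairwise inequivalent in $\Alg^{(4,2)}/{\sim}$.

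It remains to verify that the three triples have a common pushout group. By \autoref{def:fundamental}, the fundamental group of $(\phi_1^i, \phi_2^i, \phi_3^i)$ is the pushout of the three homomorphisms, which by the group trisection cube displayed in \autoref{sec:links_4-manifolds} is canonically identified with $\pi_1(S^4 - K_i, \ast)$. Since the $K_i$ were chosen to have pairwise isomorphic knot groups, the three pushouts are abstractly isomorphic, which is all that is required.

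The only real difficulty is expository: naming an explicit triple of twist-spun torus knots from \cite{gordon1973homotopy} that realises the required ``same group, not isotopic'' phenomenon, and sanity-checking that the knot-group isomorphism used there is an abstract group isomorphism (as needed for the pushout statement) rather than an isomorphism of complements. Once this selection is pinned down the argument is essentially formal: the hard content lives in \autoref{thm:Alg_4_2_to_Man_4_2} and in Gordon's theorem, and the corollary is just their concatenation.
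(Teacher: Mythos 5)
Your proposal is correct and follows essentially the same route as the paper: bridge-trisect a Gordon family of twist-spun torus knots with abstractly isomorphic groups but pairwise non-isotopic realizations, then invoke \autoref{thm:Alg_4_2_to_Man_4_2} for inequivalence and the pushout cube for the common group. The only detail you defer is the explicit choice, which the paper makes as $\Twist^2 T_{3,5}$, $\Twist^3 T_{2,5}$, $\Twist^5 T_{2,3}$, all with group $\ZZ \times \Dod^{*}$, distinguished by the minimal central power of a meridian.
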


\begin{proof}
Under topological realization, these elements of $\Alg^{(4,2)}$ correspond to the knotted spheres $\sphere{2} \hookrightarrow \sphere{4}$ listed below.
	\begin{itemize}
		\item $\Twist^2 T_{3,5}$, the 2-twist spin of the $(3, 5)$-torus knot
		\item $\Twist^3 T_{2,5}$, the 3-twist spin of the $(2, 5)$-torus knot
		\item $\Twist^5 T_{2,3}$, the 5-twist spin of the $(2, 3)$-torus knot
	\end{itemize}
Presentations for the corresponding knotted surface groups are given by, for instance,
\begin{equation*}
	\pi_{1}(\sphere{4} -  \Twist^2 T_{3,5}, \ast)
	\cong
	\langle x, y \mid x^3 = y^5, [a^2, x] = 1, [a^2, y] = 1 \rangle,
\end{equation*}
where $a = a(x, y)$ is a meridian of $T_{3,5}$ expressed as a word in the non-meridional generators $x,y$ coming from the genus $1$ Heegaard decomposition of the 3-sphere.
Recall that this Heegaard splitting leads to the presentation $\pi_{1}(\sphere{3} -  T_{3,5}, \ast) \cong \langle x, y \mid x^3 = y^5 \rangle$ of the group of the torus knot complement.
If we write the first homology of the complement multiplicatively generated by $H_{1}(\sphere{4} -  \Twist^2 T_{3,5}) \cong \langle t \rangle$, then under abelianization the generators of the knot group map to $x \mapsto t^5, y \mapsto t^3$, and we pick our orientations so that the meridian maps to $a \mapsto t$.

To write down the presentations for the other knot groups, we will abuse notation and reuse the letters $x, y$ for the generators of the torus knot complement and $a$ for a choice of meridian.
With this,
\begin{equation*}
	\pi_{1}( \sphere{4} - \Twist^3 T_{2,5}, \ast)
	\cong
	\langle x, y \mid x^2 = y^5, [a^3, x] = 1, [a^3, y] = 1 \rangle,
\end{equation*}
\begin{equation*}
	\pi_{1}( \sphere{4} - \Twist^5 T_{2,3}, \ast)
	\cong
	\langle x, y \mid x^2 = y^3, [a^5, x] = 1, [a^5, y] = 1 \rangle.
\end{equation*}
However all three of these groups $\pi_{1}(\sphere{4} - \Twist^{i} T_{j, k}, \ast)$ are abstractly isomorphic to a direct product $\ZZ \times \Dod^{*}$ of the integers with the binary dodecahedral group $\mathrm{Dod}^{*}$; see \cite{zeeman1965twisting}.

See \cite[Sec.~16]{ruppik2022casson} for explicit constructions of group trisections of these groups which come from the tri-plane diagrams described in \cite[Fig.~20]{meier2017bridgeS4}. These group trisections are not equivalent, which follows from \autoref{thm:Alg_4_2_to_Man_4_2} together with Gordon's observation \cite{gordon1973homotopy} that $\Twist^2 T_{3,5}$, $\Twist^3 T_{2,5}$, and $\Twist^5 T_{2,3}$ are all distinct, non-isotopic 2-knots. Gordon shows that the minimal exponent of a meridian's power which is central in the groups $\pi_{1}(\sphere{4} - \Twist^{i} T_{j, k}, \ast)$ has to divide the twisting parameter. Pick a generator $p$ of the punctured sphere group and follow it through the group trisection cube; we call its image in the final knotted sphere group $p$ as well. Observe that the image of $p$ is a meridional generator, and thus the smallest power of $p$ that lands these elements in the center is an invariant distinguishing the group, and consequently the group trisections. 
\end{proof}

\begin{remark}
	\autoref{prop:same_group_different_trisections} generalizes; take coprime integers $p, q, r \ge 2$ and consider the following knotted 2-spheres.
	\begin{itemize}
		\item $\Twist^{p} T_{q, r}$, the $p$-twist spin of the $(q, r)$-torus knot
		\item $\Twist^{q} T_{p, r}$, the $q$-twist spin of the $(p, r)$-torus knot
		\item $\Twist^{r} T_{p, q}$, the $r$-twist spin of the $(p, q)$-torus knot
	\end{itemize}
	Since $p, q, r$ are pairwise coprime, all three of these knotted spheres $\Twist^{i} T_{j, k} \colon \sphere{2} \hookrightarrow \sphere{4}$ for $\{ i, j, k \} = \{ p, q, r \}$ have the same fundamental group for their complement.
	By Zeeman \cite{zeeman1965twisting}, all of them are fibered by the punctured bounded Brieskorn sphere $\Sigma(p, q, r)$, but these fibrations have monodromies with different periods.
	If the fiber is a homology sphere, one can show that the resulting groups of the 2-knots are abstractly isomorphic.
	The example for $(p, q, r) = (2, 3, 5)$ is also discussed by Boyle \cite{boyle1988handles}, where he starts with Zeeman's observation that these twist spun 2-knots are fibered by a punctured Poincar{\'e} homology sphere $\Sigma(2, 3, 5)$.
	Even though they share the same group, these are non-isotopic 2-knots and one way to distinguish them is to look at the smallest power of a meridian which lies in the center of the knot group.
	This family of knots is further discussed in \cite{gordon1973homotopy}.
\end{remark}

\begin{remark}
    In \cite{suciu1985ribbon} Suciu constructs an infinite family of ribbon 2-knots $R_{i} \colon \sphere{2} \hookrightarrow \sphere{4}$ in the 4-sphere, each of which has $\pi_1(\sphere{4} - R_{i},\ast)$ isomorphic to the trefoil group.
    The 2-knots are pairwise non-isotopic, and can be distinguished by the $\ZZ \pi_1$-module structure of $\pi_2$ of their complements.
    By bridge trisecting these examples, one can construct an infinite family of elements in $\Alg^{(4,2)}$ satisfying the same properties as in \autoref{prop:same_group_different_trisections}. See  \cite[Sec.~17]{ruppik2022casson}.
\end{remark}

\subsection{Algebraic version of the smooth unknotting conjecture}

The smooth unknotting conjecture posits that an embedded sphere in $\sphere{4}$ is smoothly unknotted if and only if the group of its complement is infinite cyclic.
Here we use our correspondence in \autoref{thm:Alg_4_2_to_Man_4_2} to give an algebraic statement equivalent to the unknotting conjecture.  This is in the spirit of a result in \cite{abrams2018group} which gives a similar group-theoretic conjecture that is equivalent to the smooth 4-dimensional Poincar\'e conjecture (following the tradition of \cite{stallings1966nottoprove}).  

Note that given $(\phi_1,\phi_2,\phi_3) \in \Alg^{(4,2)}$, we can tell just from the maps $(\phi_1,\phi_2,\phi_3)$ whether the resulting surface is connected. Similarly, we can determine the Euler characteristic of the resulting surface directly from $(\phi_1,\phi_2,\phi_3)$. We say $(\phi_1,\phi_2,\phi_3)$ is \textit{spherical} if its corresponding surface is a sphere. Just as mentioned in \cite{abrams2018group}, it follows from \autoref{thm:Alg_4_to_Man_4} that there is a purely group-theoretic condition on whether or not $X(\overline{\phi_1},\overline{\phi_2},\overline{\phi_3})$, the $4$-manifold built from the associated closed bounding homomorphisms $\overline{\phi_i}$, is orientation-preserving diffeomorphic to the 4-sphere with the standard orientation.  We say in this case that $(\phi_1,\phi_2,\phi_3)$ \emph{represents $S^4$}. Then from \autoref{thm:Alg_4_2_to_Man_4_2}, it follows that the group-theoretic conjecture below is equivalent to the smooth unknotting conjecture.  

\begin{conjecture}
For every spherical $(\phi_1,\phi_2,\phi_3) \in \Alg^{(4,2)}$ which represents $S^4$, we have $(\phi_1,\phi_2,\phi_3)\\ \sim (s_1,s_2,s_3)$,  where $(s_1,s_2,s_3)$ is the group trisection given by maps \[s_1,s_2,s_3 \colon \pi_1(S^2 - \{p_1, p_2\}, \ast) \twoheadrightarrow \mathbb{Z},\] with $s_1 = s_2 = s_3$ and $s_1(p_1) = 1$ (this corresponds to the unknotted sphere in $S^4$). 
\end{conjecture}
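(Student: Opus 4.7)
The plan is to translate the algebraic statement through the bijection \autoref{thm:Alg_4_2_to_Man_4_2}. A spherical $(\phi_1,\phi_2,\phi_3)$ that represents $\sphere{4}$ corresponds under topological realization to an embedded 2-sphere $S \hookrightarrow \sphere{4}$, while the target $(s_1,s_2,s_3)$ corresponds to the unknotted 2-sphere. The equivalence relation $\sim$ on the algebraic side matches smooth isotopy of pairs on the topological side, so the statement reduces to the claim that every smoothly embedded 2-sphere in $\sphere{4}$ is smoothly isotopic to the unknotted 2-sphere. This is false without further hypothesis -- the family of \autoref{sec:twist_spun_torus_knots} provides counterexamples -- so for the equivalence with the smooth unknotting conjecture to hold as asserted, one must read the hypotheses as implicitly including $\pi_1(\sphere{4}-S) \cong \ZZ$, equivalently that the pushout of $\phi_1,\phi_2,\phi_3$ be infinite cyclic. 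This is a condition that can be checked directly from the triple of maps.

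With the added $\pi_1 \cong \ZZ$ hypothesis in place, I would attack the problem algorithmically using the tools from \autoref{sec:setup}. The goal would be to apply a finite sequence of the relations $\sim_h$, $\sim_m$, $\sim_c$, $\sim_{s_g}$, and $\sim_{s_b^i}$ that eventually drives the data to the base case $(g,b) = (0,1)$. In this base case $\pi_1(\Sigma_0 - \{p_1,p_2\},\ast) \cong \ZZ$ is already itself infinite cyclic, and \autoref{lem:uniqueness} together with the hypothesis that the pushout is $\ZZ$ would force $(\phi_1,\phi_2,\phi_3) \sim (s_1,s_2,s_3)$. A natural engine for such moves is Stallings folding (\autoref{lem:fold}): one tries to read off from the folding sequences of the graphs associated to the $\phi_i$ either explicit handleslides and mapping classes (the easy moves) or opportunities to destabilize by reversing a $\sim_{s_g}$ or $\sim_{s_b^i}$ move, following \autoref{thm:main}.

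The main obstacle is that the reduction above is by design equivalent to the smooth unknotting conjecture for $\sphere{4}$, which is a celebrated open problem; no current technique -- algebraic or geometric -- is known to prove it, so an algebraic proof via the program above would be a substantial advance. A more realistic incremental program is therefore to establish the statement in low-complexity cases: the base case $b=1$ is immediate, and Meier--Zupan have shown that low-bridge-number 2-knots in $\sphere{4}$ are standard. Reinterpreting those results as concrete folding sequences in $\Alg^{(4,2)}$ might give hope for an inductive algebraic proof on $(g,b)$, or conversely might reveal a new group-trisection invariant -- analogous to the centralizer-of-meridian-power invariant used to distinguish twist spins in \autoref{sec:twist_spun_torus_knots} -- which must then be shown to vanish under the hypothesis $\pi_1 \cong \ZZ$.
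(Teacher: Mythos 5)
This statement is a conjecture, not a theorem: the paper offers no proof of it, only the remark that, granting \autoref{thm:Alg_4_2_to_Man_4_2}, it is \emph{equivalent} to the smooth unknotting conjecture, which is open. Your proposal carries out exactly that translation and is candid that nothing further can be proved with current techniques, so on the substance you and the paper are in agreement; the folding-based program you sketch in your second and third paragraphs is speculative, goes beyond anything the paper attempts, and should not be mistaken for a proof.

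Your side observation is a genuine and worthwhile catch. As literally written, the conjecture quantifies over \emph{every} spherical $(\phi_1,\phi_2,\phi_3) \in \Alg^{(4,2)}$ representing $S^4$, with no condition on the pushout group. Taken at face value this is refuted by the paper's own examples: the group trisections of the twist-spun torus knots in \autoref{sec:twist_spun_torus_knots} are spherical, represent $S^4$, and are not equivalent to $(s_1,s_2,s_3)$, since their fundamental group is $\ZZ \times \Dod^{*}$ rather than $\ZZ$. For the asserted equivalence with the smooth unknotting conjecture to hold, the hypothesis that the pushout of $\phi_1,\phi_2,\phi_3$ is infinite cyclic must be added, exactly as you say; this condition is indeed expressible purely in terms of the triple of maps.
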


\subsection{A group-theoretic characterization of knot groups}
All knots discussed here are smooth. This subsection is motivated by the question: Exactly which groups arise as fundamental groups of codimension-2 knot complements in $S^n$? An algebraic characterization of such groups was given by Kervaire; namely, such groups are exactly the finitely-presentable groups $G$ with $H_1(G) = \mathbb{Z}$, $H_2(G) = 0$, and such that $G$ can be normally generated by a single element \cite{michel2014michel}. When $n = 3$, Artin gave a characterization for fundamental groups of knot complements in terms of group presentations \cite{artin1925theorie}.  When $n=4$, Kamada \cite{kamada1994characterization} and Gonz\'{a}lez-Acu\~{n}a \cite{gonzalez1994} have given similar characterizations also in terms of group presentations.

The methods of this paper give alternative group-theoretic characterizations of fundamental groups of knot complements in $S^n$ for $n=3,4$, in a unified fashion.  We say $(\phi_1,\phi_2) \in \Alg^{(3,1)}$ is \emph{connected} if the resulting link is connected; note that this could instead be phrased entirely in terms of the maps $\phi_1$ and $\phi_2$. We say $(\phi_1,\phi_2)$ \emph{represents $S^3$} if the resulting 3-manifold is diffeomorphic to the 3-sphere; note that this too can be phrased in an entirely algebraic way due to \autoref{thm:Alg_3_to_Man_3}. (See the previous subsection for the corresponding definition when $n=4$.) Then directly from \autoref{thm:Alg_3_1_to_Man_3_1} and \autoref{thm:Alg_4_2_to_Man_4_2} we have the following characterizations.

\begin{corollary}
A group $G$ is the fundamental group of a knot complement in $S^3$ if and only if there exists some connected $(\phi_1,\phi_2) \in \Alg^{(3,1)}$ such that $(\phi_1,\phi_2)$ represents $S^3$, and the fundamental group of $(\phi_1,\phi_2)$ (recall \autoref{def:fundamental}) is $G$.  

Similarly, a group $G$ is the fundamental group of a knot complement in $S^4$ if and only if there exists some spherical $(\phi_1,\phi_2,\phi_3) \in \Alg^{(4,2)}$ such that $(\phi_1,\phi_2, \phi_3)$ represents $S^4$, and the fundamental group of $(\phi_1,\phi_2,\phi_3)$ (recall \autoref{def:fundamental}) is $G$.
\end{corollary}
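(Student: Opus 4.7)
The plan is to deduce both characterizations directly from the bijections \autoref{thm:Alg_3_1_to_Man_3_1} and \autoref{thm:Alg_4_2_to_Man_4_2}. The only content beyond these theorems is the identification of the algebraically-defined pushout (the ``fundamental group'' of a tuple, as in \autoref{def:fundamental}) with the topologically-defined fundamental group of the complement of the realizing knot. This identification is a van Kampen computation.

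First I would handle the forward direction for $n=3$. Given a knot $K \subset S^3$ with $\pi_1(S^3 - K, \ast) \cong G$, \autoref{thm:Alg_3_1_to_Man_3_1} supplies some $(\phi_1, \phi_2) \in \Alg^{(3,1)}$ with $(M, L)(\phi_1, \phi_2) = (S^3, K)$. Connectedness of $(\phi_1, \phi_2)$ is immediate from $K$ being a single component (cf.\ \autoref{rem:components}), and that $(\phi_1, \phi_2)$ represents $S^3$ is immediate from the ambient manifold being $S^3$. It then remains to check that the pushout of $\phi_1$ and $\phi_2$ is $G$. For this, I would apply van Kampen's theorem to the decomposition
\[
    S^3 - K = (H(\phi_1) - T(\phi_1)) \cup_{\Sigma_g - \{p_1, \ldots, p_{2b}\}} (H(\phi_2) - T(\phi_2)).
\]
By the commutative triangle in the definition of topological realization, the inclusion-induced maps $\pi_1(\Sigma_g - \{p_1,\ldots,p_{2b}\}, \ast) \to \pi_1(H(\phi_i) - T(\phi_i), \ast)$ are identified with $\phi_1$ and $\phi_2$ via the isomorphisms of \autoref{lem:main}. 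Thus the van Kampen pushout is the pushout of $\phi_1$ and $\phi_2$, yielding $G$.

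The backward direction is symmetric: given $(\phi_1,\phi_2)$ satisfying the hypotheses, its topological realization is a knot $K$ in $S^3$, and the same van Kampen argument gives $\pi_1(S^3 - K, \ast) \cong G$. The four-dimensional case runs in exact parallel using \autoref{thm:Alg_4_2_to_Man_4_2}: a spherical $(\phi_1,\phi_2,\phi_3)$ that represents $S^4$ realizes a knotted 2-sphere $S \subset S^4$, and I would apply van Kampen to the triple open cover $S^4 - S = \bigcup_{i=1}^{3} (X_i - \mathcal{D}_i)$, whose pairwise intersections are $(H(\phi_i) \cup_{\Sigma_g} H(\phi_j)) - (T(\phi_i) \cup T(\phi_j))$ and whose triple intersection deformation retracts onto $\Sigma_g - \{p_1,\ldots,p_{2b}\}$. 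The resulting colimit equals the iterated pushout of the three maps $\phi_1,\phi_2,\phi_3$, which is the fundamental group of the triple.

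I do not expect substantive obstacles, since the hard content is already packaged in the two bijection theorems we are allowed to cite. The only bookkeeping point deserving care is verifying that in the 4-dimensional van Kampen computation, the pairwise fundamental groups appearing along the edges of the pushout cube are identified with the free groups in the codomain of the $\phi_i$ in a manner compatible with the triple inclusions; this is essentially the content of the pushout cube already displayed before \autoref{lem:fundamental_group_detects_unlink}, so nothing new is required.
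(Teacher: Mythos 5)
Your proposal is correct and follows the same route as the paper, which simply derives the corollary ``directly from'' \autoref{thm:Alg_3_1_to_Man_3_1} and \autoref{thm:Alg_4_2_to_Man_4_2}; the van Kampen identification of the pushout with the fundamental group of the link complement that you spell out is exactly the content of the remark after \autoref{def:fundamental} and of the pushout cube displayed in \autoref{sec:links_4-manifolds}. Nothing is missing.
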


\subsection{Algorithmic decidability}

Recognizing whether a given finitely presented group is 
the group of the complement of a knotted surface
in the 4-sphere
is not algorithmic;
this is the case
$\mathcal{A} = \mathcal{G}, \mathcal{B} = \mathcal{K}_2$
in \cite[Thm.~1.1]{gordon2010unsolvable}.
It is undecidable
whether a given finite presentation describes
the fundamental group of the complement of a codimension-2 knot in $S^n$ for $n \ge 3$ \cite{gordon1995embedding}. The following then is a corollary of a combination of
\cite[Thm.~1.1]{gordon2010unsolvable} and \autoref{thm:Alg_4_2_to_Man_4_2}.

\begin{corollary}
Given a group $G$, it is an undecidable problem to find a group trisection $(\phi_1,\phi_2,\phi_3)$ in $\Alg^{(4,2)}$ representing $S^4$ with $b>0$ and fundamental group $G$ (recall \autoref{def:fundamental}),
or to show that none exists
(because this would decide whether $G$ is a knotted surface group).
\end{corollary}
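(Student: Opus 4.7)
The plan is to reduce the undecidability of knotted-surface-group recognition, due to Gordon \cite[Thm.~1.1]{gordon2010unsolvable}, to the problem in the statement; the translation is mediated by the bijection from \autoref{thm:Alg_4_2_to_Man_4_2}. The strategy is a short contrapositive argument: an algorithm for the group-trisection problem would immediately yield an algorithm for the knotted-surface-group problem, which does not exist.

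Concretely, suppose there were an algorithm $\mathcal{A}$ which, on input a finitely presented group $G$, either outputs a triple $(\phi_1, \phi_2, \phi_3) \in \Alg^{(4,2)}$ with $b > 0$ representing $S^4$ and fundamental group $G$, or certifies that no such triple exists. By \autoref{thm:Alg_4_2_to_Man_4_2}, the map $(X,S)$ descends to a bijection $\Alg^{(4,2)}/{\sim} \,\to\, \Man^{(4,2)}$. Under this bijection, the subset of triples with $b > 0$ representing $S^4$ corresponds exactly to isotopy classes of pairs $(S^4, F)$ with $F \subset S^4$ a non-empty closed knotted surface, and the fundamental group of $(\phi_1, \phi_2, \phi_3)$ in the sense of \autoref{def:fundamental} is canonically isomorphic to $\pi_1(S^4 - F, \ast)$. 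Hence $\mathcal{A}(G)$ succeeds in producing a triple if and only if $G$ is the fundamental group of the complement of a non-empty knotted surface in $S^4$, giving a decision procedure for knotted surface groups and contradicting Gordon's theorem.

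The main thing to verify carefully is that the dictionary between algebraic and topological conditions is sharp: ``$(\phi_1,\phi_2,\phi_3)$ represents $S^4$'' on the algebraic side must correspond to ``underlying $4$-manifold is $S^4$'' on the topological side, and ``$b > 0$'' must correspond to ``$F$ is non-empty''. The first is by design of the notion of representing $S^4$ from \autoref{sec:closed_4-manifolds}. For the second, $b$ counts punctures on the central surface, which are exactly the intersection points of $F$ with the central surface of the bridge trisection, so $b > 0$ if and only if $F$ is non-empty. The agreement between the algebraic fundamental group in \autoref{def:fundamental} and $\pi_1(S^4 - F, \ast)$ is built into the construction of the map $(X,S)$ in \autoref{sec:links_4-manifolds} via van Kampen. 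I expect no further obstacles; the content of the corollary lies entirely in \autoref{thm:Alg_4_2_to_Man_4_2} and Gordon's undecidability result.
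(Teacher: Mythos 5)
Your proposal is correct and follows exactly the paper's (very brief) argument: the corollary is obtained by combining Gordon's undecidability of recognizing knotted surface groups with the bijection of \autoref{thm:Alg_4_2_to_Man_4_2}, exactly as in your reduction. The extra details you supply (that $b>0$ corresponds to a non-empty surface, that ``represents $S^4$'' matches the underlying manifold, and that the pushout group is $\pi_1(S^4 - F,\ast)$) are all consistent with the paper's definitions and merely make explicit what the paper leaves implicit.
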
 

Recall that there \textit{is} an algorithm for deciding whether the pushouts appearing in group trisections are free groups (see \autoref{rem:algorithmic}).

\begin{question}
    There are many undecidability results in group theory (see for example \cite{miller1992decision}).  Can the above techniques be used to show the existence or non-existence of an algorithm to recognize if a closed smooth 4-manifold is diffeomorphic to $S^4$?
    Can the above techniques be used to show the existence or non-existence of an algorithm to recognize if a smooth embedding of a $2$-sphere in $S^4$ is unknotted?  In a different direction, can our framework be useful for determining if there is a polynomial-time algorithm for deciding whether a knot in $S^3$ is the unknot?
\end{question}

\begin{question}
    Are there extensions of the bijections between sets of the form $\Alg^n / \sim$ and sets of the form $\Man^n$ (as in \autoref{sec:3-manifolds} and \autoref{sec:4-manifolds}) for $n >4$?  
    A negative answer says that somehow the free groups and surface groups are responsible for the unique character of low-dimensional topology. 
\end{question}

\printbibliography

\end{document}